\numberwithin{equation}{section}
\newtheorem{definition}{Definition}
\newtheorem{theorem}{Theorem}
\newtheorem{rem}{Remark}
\newtheorem{lemma}{Lemma}
\newcommand{\loc}{\operatorname{loc}}
\newcommand{\eps}{\varepsilon}
\newcommand{\llangle}[1][]{\savebox{\@brx}{\(\m@th{#1\langle}\)}%
  \mathopen{\copy\@brx\kern-0.5\wd\@brx\usebox{\@brx}}}
\newcommand{\rrangle}[1][]{\savebox{\@brx}{\(\m@th{#1\rangle}\)}%
  \mathclose{\copy\@brx\kern-0.5\wd\@brx\usebox{\@brx}}}
\title[Nonlinear Milne Problem for radiative heat transfer system]{Stability of the nonlinear Milne Problem for radiative heat transfer system}
\author{Mohamed Ghattassi}
\address{Department of Mathematics, New York University in Abu Dhabi, Saadiyat Island, P.O. Box 129188, Abu Dhabi, United Arab Emirates, {\sf mg6888@nyu.edu}}
\author{Xiaokai Huo}
\address{Institute for Analysis and Scientific Computing, Vienna University of Technology, Wiedner Hauptstraße 8–10, 1040 Wien, Austria, {\sf xiaokai.huo@tuwien.ac.at}}
\author{Nader Masmoudi}
\address{Department of Mathematics, New York University in Abu Dhabi, Saadiyat Island, P.O. Box 129188, Abu Dhabi, United Arab Emirates-
Courant Institute of Mathematical Sciences, New York University, 251 Mercer Street, New York, NY 10012, USA, {\sf nm30@nyu.edu}}
\begin{document}

\begin{abstract} 
    This paper focuses on the nonlinear Milne problem of the radiative heat transfer system on the half-space. The nonlinear model is described by a second order ODE for temperature coupled to transport equation for radiative intensity. The nonlinearity of the fourth power Stefan-Boltzmann law of black body radiation, bring additional difficulty in mathematical analysis, compared to the well-developed theory for Milne problem of linear transport equation. With the help of the monotonicity property of the second order ODE, we prove the existence of the nonlinear Milne problem on a finite interval using monotonic convergence theorems. Then the solution is extended to the half-space using a uniform weighted estimate and the compactness method. Moreover, the solutions are proved to converge to constants as $x\to \infty$. Therefore, the linear stability analysis is used to study the uniqueness of the nonlinear Milne problem. The existence and uniqueness for the linearized system is established under a spectral assumption on the solution of the nonlinear problem. The spectral assumption is shown to be satisfied when the boundary data is close to the well-prepared case by using a generalized Hardy's inequality. The uniqueness of the solution to the half-line nonlinear Milne problem is established in a neighborhood of solutions satisfying a spectral assumption and the energy estimate. 
    % or when the boundary data is close to be well-prepared.    
    %existence and uniqueness of a linearized system is obtained under a spectral assumption and uniqueness for the nonlinear Milne problem is established under suitable constraints.
    % The existence of weak solutions is first proved in a bounded interval by using the Leray-Schauder fixed point theorem and monotone convergence theorem and  is then extended to the half-line by using a uniform weighted estimate and compactness method. Moreover, the solutions are proved to converge to constants at infinity. In addition, the linear stability of the nonlinear problem is also investigated. Existence and uniqueness for the linearized problem is established under a spectral assumption on the solution of the nonlinear problem. The spectral assumption is shown to be satisfied when the boundary data is close to the well-prepared case by using a generalized Hardy's inequality. Moreover, the uniqueness of the solution to the half-line nonlinear Milne problem is established in a neighborhood of solutions satisfying the spectral assumption or when the boundary data is close to be well-prepared. 
    The current work extends the study of Milne problem for linear transport equations and provides a comprehensive study on the nonlinear Milne problem of radiative heat transfer systems.

\end{abstract}
\keywords{Radiative transfer, Milne problem, Maximum principle, Fixed point theorem, Spectral stability}

\maketitle

\tableofcontents

\section{Introduction}

We consider the following system
\begin{align}
    \partial_{x}^2 T + \langle \psi - T^4 \rangle  =~& 0,\label{eq:m1}\\
    \mu \partial_x \psi + (\psi-T^4) = ~&0,\label{eq:m2}
\end{align}
in the space $(x,\mu) \in \mathbb{R}_+ \times [-1,1]$,
where $T=T(x)$ and $\psi=\psi(x,\mu)$ are the temperature and radiative intensity, respectively.
%  Here $x\in \mathbb{R}_+$ and $\mu \in [-1,1]$. 
 The bracket $\langle \cdot \rangle$ denotes the integral over $\mu \in [-1,1]$, i.e. $\langle f\rangle := \int_{-1}^1 f(\mu)d\mu$.
The boundary conditions are taken to be the non-homogeneous Dirichlet conditions
\begin{align}
    T(0) =~& T_b, \label{eq:bd1}\\
    \psi(0,\mu) =~&\psi_b(\mu),  \text{ for any }\mu \in(0,1], \label{eq:bd2}
    %\alpha\psi(0,-\mu)+ (1-\alpha)\psi_b(\mu), \text{ for any }\mu \in(0,1], \label{eq:bd2}
\end{align}
where $T_b \ge 0$ is a given constant and $\psi_b=\psi_b(\mu) \ge 0$ is a given function of $\mu \in (0,1].$
Here $\psi_b$ describes the radiative intensity transmitted into the medium from outside.
% , $\psi(0,-\mu)$ is the reflective radiative intensity into the incident radius $-\mu$.
In this paper, the above problem is called the \emph{nonlinear Milne problem of radiative heat transfer}. The main objective of our work is to study the existence, uniqueness, long time behavior and stability of this nonlinear Milne problem.

System \eqref{eq:m1}-\eqref{eq:m2} was derived in the boundary layer problem for the diffusive limit of the following radiative heat transfer system
\begin{align*}
    &\partial_t T^\eps = \Delta T^\eps + \frac{1}{\eps^2}\langle \psi^\eps - (T^\eps)^4 \rangle,\\
    &\partial_t \psi^\eps + \frac{1}{\eps}\omega\cdot \nabla \psi^\eps = -\frac{1}{\eps^2}(\psi^\eps-(T^\eps)^4),
\end{align*}
where $T^\eps=T^\eps(t,X)$, $\psi^\eps=\psi(t,X,\omega)$ with $X\in\mathbb{R}^3$, $\omega \in \mathbb{S}^2$. System \eqref{eq:m1}-\eqref{eq:m2} is the boundary layer problem of the corresponding steady state problem of the above radiative heat transfer system, for more details we refer the reader to \cite{klar2001numerical} and references therein.
% Equation \eqref{eq:m1} corresponds to the boundary layer of the steady heat equation and equation \eqref{eq:m2} corresponds to the boundary layer of the steady transport equation, for more details we refer the reader to \cite{klar2001numerical} and references therein.
When the boundary data is well-prepared such that $\psi_b=T_b^4$, system \eqref{eq:m1}-\eqref{eq:m2} has the constant solution $(T_b,\psi_b1_{\mu \in [0,1]})$, where $1_{\mu \in [0,1]}$ denotes the characteristic function on $[0,1]$, and
no boundary layer exists. In this case the diffusive limit for the radiative transfer system was rigorously justified in our previous work \cite{ghattassi2020diffusive} based on the weak convergence method and the relative entropy method. In general, when $\psi_b \neq T_b^4$, it is crucial to study system \eqref{eq:m1}-\eqref{eq:m2} because the boundary layer effects make the interior solution obtained by asymptotic analysis invalid near the boundary. The solution to \eqref{eq:m1}-\eqref{eq:m2} provides a better description of the radiative transfer system near the boundary. Moreover, the boundary condition for the interior solutions needs to be determined by the limit of the solutions to the nonlinear Milne problem \eqref{eq:m1}-\eqref{eq:m2} as $x\to \infty$. Based on this work, we are able to provide a rigorous justification of the diffusive limit \cite{Bounadrylayer2021GHM3,Bounadrylayer2021GHM4}. Hence it is important to study the nonlinear Milne problem and show the behavior of its solutions at infinity.

% However, more general boundary conditions may lead to the existence of boundary layers, where the solutions near the boundary should be described by the nonlinear Milne problem. The diffusive limit for general boundary conditions will be examined in our future work \cite{Bounadrylayer2021GHM3,Bounadrylayer2021GHM4}.
There has been a well-developed theory of the Milne problem for the linear transport equations. When the equation for temperature is not considered, system \eqref{eq:m1}-\eqref{eq:m2} becomes 
\begin{align}\label{eq:lm}
    \mu \partial_x \psi + \psi - \frac12 \langle \psi \rangle = 0.
\end{align}
This problem, referred to as the Milne problem for linear transport equation, dates back to the work \cite{bensoussan1979boundary,bardos1984diffusion}. It was derived in \cite{bensoussan1979boundary} as the boundary layer problem of the linear transport equation. The equation was shown to decay exponentially to a constant therein by using the theory of stochastic processes. Later, this was also proved by using compactness method and weighted estimates in \cite{bardos1984diffusion}. Since $0$ belongs to the spectral of the linear operator $\mathcal{L}\psi:=-\mu\partial_x\psi - \psi + \tfrac12\langle\psi\rangle$, one cannot use semigroup theory to show the existence of solutions for the above problem. Instead, the maximum principle is used to show the existence of solutions in $L^\infty$. Moreover, using a weighted estimate, the exponential convergence and the uniqueness of solutions is shown in \cite{bardos1984diffusion}. Recently, a geometric correction to the above Milne problem is proposed in \cite{wu2015geometric} to better describe the behavior of solutions near the boundary when the boundary is not flat. A weighted estimate accounting for the geometric corrections was provided therein to show the exponential convergence of solutions.

Compared to the Milne problem for linear transport equations, there are only few works on the nonlinear Milne problem. For example, in \cite{klar2001numerical}, the existence of weak solutions in $L^\infty_{\rm loc}(\mathbb{R}_+)\times L^\infty_{\rm loc}(\mathbb{R}_+\times [-1,1])$ was shown under the assumption that $\gamma_1\le T_b \le \gamma_2$ for some constants $\gamma_1>0$, $\gamma_2 \ge 0$ by using the Leray-Schauder fixed point theorem. However, whether the solutions converges to some finite constants is not known. Moreover, the uniqueness of weak solutions are not known. In \cite{kelley1996existence}, existence and uniqueness are shown for system \eqref{eq:m1}-\eqref{eq:m2} with Dirichlet boundary data in a bounded interval by using the maximum principle. For the half-space nonlinear Milne problem, the boundary condition at infinity is not known and such method cannot be adapted to our problem.

The major difficulty of the study of the nonlinear Milne problem \eqref{eq:m1}-\eqref{eq:bd2} lies in two aspects. Firstly, the second order ODE \eqref{eq:m1} and the transport equation \eqref{eq:m2} are coupled and need to be studied together. Maximum principle holds for equation \eqref{eq:m1} and \eqref{eq:m2} separately, but is not known to hold for the system. Secondly, the $T^4$ term is nonlinear, which is due to the Stefan-Boltzmann law of black body radiation. This physical term, makes the Milne problem nonlinear and the theory of linear operators  not applicable. The lack of maximum principle and the nonlinearity nature of the problem, make the study of our problem challenging. 

%On the other hand, coupling heat conduction and radiation is important in many applications, including the modeling of glass cooling and atmospheric radiation\cite{frank2010optimal,bardos2021remarks, bardos2021radiative}. The study of the nonlinear Milne problem \eqref{eq:m1}-\eqref{eq:m2} may help us understand the boundary layer effects in these applications.

To overcome the above difficulty, several techniques are used. Firstly, the maximum principle of the second order ODE \eqref{eq:m1} and the transport equation \eqref{eq:m2} also implies the monotonicity of solutions with respect to source term and boundary data. Such monotonicity property could help to construct an approximate sequence converging to the solutions to \eqref{eq:m1}-\eqref{eq:bd2}. Such a technique was used in \cite{clouet2009milne} to study the Milne problem for a similar system but without the second order derivative $\partial_x^2 T$. Here we need to use the monotonicity property of the second order ODE to construct functions that converge to solutions of the nonlinear Milne problem. Secondly, to prove the exponential decay property of the solution, we cannot directly follow \cite{bardos1984diffusion} since the property that $\partial_x \psi$ satisfies the same equation as $\psi$ is used to show the exponential decay of solution for the Milne problem \eqref{eq:lm}. However, in our case, due to nonlinearity, $(\partial_x T,\partial_x \psi)$ satisfies a different system.
Fortunately, we are able to derive a weighted estimate in a similar way as \cite{bardos1984diffusion} that leads to the exponential decay of $T$ as well as $\partial_x T$, which also implies the decay property of $\psi$ by directly using the formulas for linear transport equation. Thirdly, unlike the linear Milne problem \eqref{eq:lm}, here the uniqueness of nonlinear Milne problem \eqref{eq:m1}-\eqref{eq:bd2} does not follow from the weighted estimates and decay property of solutions due to the nonlinearity. We solve this issue by using linear stability analysis and propose a spectral assumption that leads to weighted estimates and the uniqueness of the linearized system. Lastly, a sufficient condition is proposed for the spectral assumption by using the generalized Hardy's inequality. This allows us to prove the uniqueness of the nonlinear Milne problem in the vicinity of solutions satisfying the sufficient condition or near well-prepared boundary data where  $\psi_{b}-T_b^4$ is small.

%  in \cite{bardos1984diffusion} for the Milne problem of the linear transport equation, the weighted estimate is derived directly on the half-line by decomposing the radiative density into $\psi=\bar{\psi} + \tilde{\psi}$ with $\tilde{\psi}=\psi-\langle \psi\rangle/2$ and use the fact that $\partial_\eta \psi$
%The  problem on the bounded interval, with Dirichlet boundary conditions, was studied in \cite{kelley1996existence}, where the existence and uniqueness of solutions were established using the maximum principle. However, such results cannot be extend to the half-space problem. This is because the boundary conditions \eqref{eq:bd1}-\eqref{eq:bd2} are imposed only at $x=0$ and we have a free boundary (no boundary conditions) at infinity. 

System \eqref{eq:m1}-\eqref{eq:m2} could be extended by considering the dependence on frequency of the radiative density. The corresponding frequency dependent model reads as
% system \eqref{eq:m1}-\eqref{eq:m2} reads as 
\begin{align*}
    &\partial_x^2 T +   \int_0^\infty \int_{-1}^1 (\psi - B_\nu(T)) d\mu d\nu = 0,\\
    &\mu \partial_x \psi + \psi - B_\nu(T) = 0,
\end{align*}
where $T=T(x),\psi = \psi(x,\mu,\nu)$ and $\nu \in (0,\infty)$ is the frequency. The nonlinear function $B_\nu(T)$ is usually taken to be the Planck function 
\[
B_\nu(T) = \frac{2h\nu^3}{c^2}\frac{1}{e^{\frac{h\nu}{kT}}-1},
\]
where $c$ is the speed of light, $k$ is the Boltzmann constant and $h$ is the Planck constant. When the diffusive operator $\partial_x^2 T$ is neglected, the Milne problem is studied in \cite{larsen1983asymptotic,sentis1987half,clouet2009milne}. The existence of the above frequency dependent half-space problem was proved in \cite{sentis1987half}, where more later a simplified demonstration was provided in \cite{clouet2009milne}. However, the existence and uniqueness for the nonlinear Milne problem \eqref{eq:m1}-\eqref{eq:m2} and the above frequency dependent problem has not been proved in the literature. Our result of this paper could be extended to the frequency dependent case without any difficulty.

% The main difficulty in studying these problems, compared to the works on the linear Milne problem for the linear transport equation, studied e.g. in \cite{bardos1984diffusion,bensoussan1979boundary,golse1987milne,wu2015geometric}, lies in the parabolic coupling and the nonlinearity of $T^4$. Our result here could be extended to the frequency dependent case easily.

Our work could be treated as a development of the method of weighted estimates first developed in \cite{bardos1984diffusion} to the nonlinear coupled Milne problem. The work \cite{bardos1984diffusion} paves the way to use rigorous mathematical analysis to study the boundary layer problem of kinetic PDEs. The same technique was used in \cite{wu2015geometric} to study the Milne problem of linear transport equation with geometric corrections. Such techniques have been also used and developed for the other kinetic equations, in particular Boltzmann equations. For example, existence, uniqueness and asymptotic behavior of the linearized Boltzmann equation for a gas of hard spheres is studied by using more complicated weighted estimates in \cite{bardos1986milne}. The analysis is extended to gas mixtures in \cite{aoki2003knudsen} and to the Vlasov-Boltzmann-Poisson equation in \cite{bostan2010boundary}. The nonlinear Milne problem for Boltzmann equation was considered in \cite{golse1988boundary}, where existence and exponential decay property of solutions are proved for the  Milne problem for the nonlinear Boltzmann equation in one-dimension with a slightly perturbed reflection boundary condition using weighted estimates and the Banach fixed-point theorem.  For a boundary condition that the gas is in contact with its condensed phase, the existence and uniqueness of a uniformly decaying solution is shown in a neighborhood of Maxwellian equilibrium in \cite{bernhoff2021boundary}, where a penalized problem is studied in order to consider the phase transition at the interface. Besides these works, there are also some other ways to study the Milne problems of Boltzmann equations, for example \cite{aoki2003knudsen,bostan2010boundary,huang2022boundary,liu2013invariant,ukai2003nonlinear}.  Among them,  In \cite{liu2013invariant}, the authors use the theory of invariant manifolds and use Green functions to construct linear invariant manifolds for the half-space Milne problem of Boltzmann equation with Dirichlet boundary conditions. Based on the macro-micro or hydrodynamics-kinetic decomposition of solutions, existence of solutions to the same nonlinear problem is proved in \cite{ukai2003nonlinear} and it is found that the Mach number of Maxwellian affects the existence. For the case of reflective boundary conditions, the existence is shown recently in \cite{huang2022boundary}, where uniqueness is also established under some constraint conditions. For a review of the boundary layer problem for Boltzmann equation, we refer the reader to \cite{bardos2006half}. Since techniques and methods for the Milne problem of linear transport equations have been extended to study the Milne problem for the Boltzmann equation, for example from \cite{bardos1984diffusion} to \cite{bardos1986milne} and \cite{bernhoff2021boundary}, the study of the nonlinear Milne problem \eqref{eq:m1}-\eqref{eq:bd2} could provide tools and insights for other linear and nonlinear Milne problem for system of kinetic equations.

% For example, Bernhoff and Golse \cite{bernhoff2021boundary} offered the existence and uniqueness of a uniformly decaying boundary layer type solution in the situation that gas is in contact with its condensed phase. More recently, a pointwise estimate has been derived for the steady Boltzmann equation in half-space with  diffusive  reflection  boundary  conditions by Huang-Wang in \cite{huang2022boundary}. The difficulty in dealing with the boundary layer problem of the Boltzmann equation is due to the nonlinearity and complexity of the collisional operator, which is different to our Milne problem \eqref{eq:m1}-\eqref{eq:bd2}.

\subsection{Main results}
% The main result of this paper is the following theorem.
The existence and exponential decay properties of solutions to the nonlinear Milne problem \eqref{eq:m1}-\eqref{eq:bd2} is stated in the following theorem.
\begin{theorem}[Existence of the nonlinear Milne problem]\label{thm:ex}
    Given $T_b\ge 0$, $\psi_b=\psi_b(\mu)\ge 0$ for any $\mu \in(0,1]$, there exists a bounded non-negative weak solution $\left(T,\psi \right) \in L_{\rm loc}^2(\mathbb{R}_{+})\times L_{\rm loc}^2(\mathbb{R}_{+}\times [-1,1])$ to system \eqref{eq:m1}-\eqref{eq:m2}, and the solution satisfies the following estimate:
    \begin{align}\label{eq:es-2}
        &\int_0^{\infty} e^{2\alpha x} 4T^3|\partial_x T|^2 dx + (1-\alpha) \int_0^{\infty}\int_{-1}^1 e^{2\alpha x} (\psi-T^4)^2 d\mu dx \nonumber\\
        %  \int_0^x \int_{-1}^1  (1-\alpha \mu)e^{2\alpha z} (\psi-T^4)^2 d\mu dx\nonumber\\
        % &\quad + \frac{1}{2}\int_0^{\infty}\int_{-1}^1 \mu e^{2\alpha z} (\psi(x,\cdot)-T^4(x))^2 d\mu dx 
        &\quad + \frac{1}{2}\int_{-1}^0 |\mu| (\psi(0,\mu)-T_b^4)^2 d\mu \le \frac{1}{2} \int_0^1 \mu (\psi_b-T_b^4)^2 d\mu,%\quad \forall\alpha \in [0,1).
    \end{align}
 for any $0 \le \alpha <1$. Moreover, there exists a constant $T_\infty \ge 0$ such that 
 \begin{equation}
   \begin{aligned} \label{eq:thm.ex}
    	&|T(x)-T_\infty|\le M_{\alpha} e^{-\alpha x},\\
        & |\psi(x,\mu)-T_\infty^4| \le |\psi_b - T_b^4| e^{-\frac{x}{\mu}} + 4(T_b+2M_{\alpha})^3M_{\alpha} \frac{1}{1- \mu\alpha}e^{-\alpha x},
        % |\psi_b-T_b^4|e^{-\frac{x}{\mu}} + \frac{(2-\alpha)}{1-\alpha} 4(T_b+2M_{\alpha})^3 M_{\alpha} e^{-\alpha x}, 
        ~ \text{for } \mu>0, \\%\quad \forall \left(x,\alpha\right) \in \mathbb{R}_{+}\times(0,1)
        &|\psi(x,\mu)-T_\infty^4| \le 4(T_b+2M_{\alpha})^3M_{\alpha}\frac{1}{1-\mu\alpha} e^{-\alpha x},\quad \text{for } \mu<0,
    \end{aligned} 
\end{equation}
    for any $x\in \mathbb{R}_+$ and $0\le \alpha <1$, where $$M_{\alpha}=\frac{1}{\sqrt{6\alpha(1-\alpha)}}\left(\int_0^1 \mu(\psi_b-T_b^4)^2 d\mu\right)^{\frac12}.$$
    As a consequence, we can get the estimate 
    \begin{align}\label{eq:Tbd}
        T_\infty \le T_b + M_{\alpha},\quad |T(x)| \le T_b + 2M_{\alpha}.
    \end{align}
    
    % $C>0$ is a constant only depending on $\alpha$ and $\int_0^1\mu(\psi_b-T_b^4)^2 d\mu$. 
    % Furthermore, the solution to system \eqref{eq:m1}-\eqref{eq:m2} is unique for sufficiently small value of $\int_0^1 \mu (\psi_b-T_b^4)^2 d\mu$.
    
    %Moreover, the solution of system \eqref{eq:m1}-\eqref{eq:m2} converges exponentially to $(T_\infty,T_\infty^4)$, i.e., 

    %\[
    %(T,\psi) \longrightarrow     (T_\infty,T_\infty^4)\quad \text{as},\,\, x \rightarrow \infty,
    %\]
    % where $T_\infty \ge 0$ is a constant.
    \end{theorem}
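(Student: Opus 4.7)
The plan is to first construct a solution on a finite interval $[0,L]$ by a monotone iteration based on the maximum principles for \eqref{eq:m1} and \eqref{eq:m2} separately, then extend to the half-line via a uniform-in-$L$ weighted energy estimate and compactness, and finally read off the exponential decay (and the value $T_\infty$) from this same estimate, the pointwise bound on $\psi$ following from the representation of \eqref{eq:m2} along characteristics.

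For the finite-interval problem I would impose auxiliary Dirichlet/inflow data at $x=L$ (say $T(L)=T_b$ and $\psi(L,\mu)=T(L)^4$ for $\mu<0$) and iterate: given a bounded nonnegative $T^{(n)}$, solve the linear transport equation \eqref{eq:m2} along characteristics to obtain $\psi^{(n+1)}$, then solve the semilinear ODE \eqref{eq:m1} for $T^{(n+1)}$ with source $\langle\psi^{(n+1)}\rangle$ by the maximum principle. The coupling is monotone (larger $T$ yields a larger $T^4$ source and hence larger $\psi$, and by comparison larger $\psi$ in \eqref{eq:m1} yields larger $T$), so starting from a suitable sub-/super-solution pair the iterates converge monotonically to a bounded nonnegative solution on $[0,L]$.

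The weighted estimate \eqref{eq:es-2} is the heart of the proof. Multiplying \eqref{eq:m2} by $(\psi-T^4)$, writing $\mu\partial_x\psi=\mu\partial_x(\psi-T^4)+4\mu T^3\partial_x T$, and integrating in $\mu$ yields the pointwise identity
\begin{equation*}
\partial_x\!\int_{-1}^{1}\!\tfrac{\mu}{2}(\psi-T^4)^2\,d\mu + 4T^3\partial_x T\cdot\langle\mu(\psi-T^4)\rangle + \langle(\psi-T^4)^2\rangle = 0.
\end{equation*}
The key ingredient is the identity $\langle\mu(\psi-T^4)\rangle=\partial_x T$, obtained by integrating \eqref{eq:m2} in $\mu$, substituting \eqref{eq:m1}, and enforcing $\partial_x T\to 0$, $\langle\mu\psi\rangle\to 0$ as $x\to\infty$. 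Inserting this identity, multiplying by $e^{2\alpha x}$, integrating on $[0,L]$, and using $|\langle\mu(\psi-T^4)^2\rangle|\le\langle(\psi-T^4)^2\rangle$ together with the boundary split at $x=0$ yields a bound uniform in $L$ (the $x=L$ boundary contribution is non-negative under the chosen auxiliary data and may be dropped). Weak compactness then extracts a half-line solution satisfying \eqref{eq:es-2}.

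The decay follows from the same identity $\partial_x T=\langle\mu(\psi-T^4)\rangle$: Cauchy--Schwarz together with $\langle\mu^2\rangle=2/3$ gives $|\partial_x T(x)|^2\le\tfrac{2}{3}\langle(\psi-T^4)^2\rangle(x)$, so $T_\infty:=\lim_{x\to\infty}T(x)$ exists, and a further Cauchy--Schwarz against the weight $e^{-\alpha s}$ produces $|T(x)-T_\infty|\le M_\alpha e^{-\alpha x}$ with exactly the claimed constant. The $L^\infty$ bound \eqref{eq:Tbd} follows by the triangle inequality, and the pointwise estimates on $\psi$ by inserting $|T^4-T_\infty^4|\le 4(T_b+2M_\alpha)^3|T-T_\infty|$ into the explicit characteristic formulas for \eqref{eq:m2} (integrated forward from $0$ for $\mu>0$ and backward from $+\infty$ for $\mu<0$) and computing the resulting exponential integral. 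I expect the principal difficulty to lie in the finite-interval construction: pinning down a sub-/super-solution pair that confines the iteration to a cone of nonnegative, uniformly bounded functions where the maximum principles actually apply, together with verifying that the $x=L$ boundary contribution in the weighted energy identity is indeed non-negative, so that the passage $L\to\infty$ preserves \eqref{eq:es-2}.
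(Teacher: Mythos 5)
Your overall architecture — monotone iteration on a truncated interval, a uniform weighted energy estimate, compactness to pass to the half-line, and decay read off from the estimate — is the paper's architecture, and your weighted identity and the computation of $M_\alpha$ match the paper's exactly. There is, however, one concrete gap: your choice of auxiliary data at the right endpoint, $T(L)=T_b$ and $\psi(L,\mu)=T(L)^4$ for $\mu<0$, does not support the key identity $\partial_x T=\langle\mu\psi\rangle=\langle\mu(\psi-T^4)\rangle$ on $[0,L]$. Comparing \eqref{eq:m1} with $\langle\,\eqref{eq:m2}\,\rangle$ only gives that $j:=\partial_x T-\langle\mu\psi\rangle$ is \emph{constant} on $[0,L]$; your appeal to "$\partial_x T\to0$, $\langle\mu\psi\rangle\to0$ as $x\to\infty$" is unavailable on the finite interval, and with Dirichlet data at $x=L$ neither $\partial_x T(L)$ nor $\langle\mu\psi(L,\cdot)\rangle$ is pinned down, so $j$ need not vanish. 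Without $j=0$ the cross term in your identity is
\begin{equation*}
4T^3\partial_x T\,\langle\mu(\psi-T^4)\rangle=4T^3|\partial_x T|^2-j\,\partial_x(T^4),
\end{equation*}
and in the weighted version the extra term $-j\int_0^L e^{2\alpha x}\partial_x(T^4)\,dx$ is not sign-definite and not uniformly controllable in $L$ (one can only show $|j|\lesssim 1/L$ from boundedness, which does not beat the factor $e^{2\alpha L}$ produced by integration by parts). This is precisely why the paper imposes $\partial_x T^B(B)=0$ together with the specular condition $\psi^B(B,\mu)=\psi^B(B,-\mu)$ in \eqref{eq:B1b}--\eqref{eq:B2b}: these force $\partial_x T^B(B)=\langle\mu\psi^B(B,\cdot)\rangle=0$, hence $j=0$ and the exact identity \eqref{lm4.=} on all of $[0,B]$, and simultaneously make the $x=B$ boundary flux $\tfrac12\langle\mu(\psi^B(B,\cdot)-(T^B(B))^4)^2\rangle$ vanish. (Your inflow choice does make that boundary term nonnegative, but that alone is not enough.) To repair your proof you should replace your auxiliary conditions by the reflecting/Neumann ones, after which the rest of your outline goes through as in the paper.

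A secondary, more minor point: in the monotone iteration the comparison principle for the semilinear ODE $-\partial_x^2T+2T^4=\langle\psi\rangle$ requires the nonlinearity to be monotone on the whole range visited by the iterates; the paper handles this by replacing $T^4$ with a globally strictly increasing extension $\phi(T)$ and running Leray--Schauder plus the maximum principle to confine the iterates to $[0,\gamma]$, where $\phi(T)=T^4$. Your sketch implicitly assumes this confinement; it is worth making the extension explicit so that "larger source implies larger solution" is actually justified outside $[0,\gamma]$.
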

   % In this paper, we will prove the existence and uniqueness of system \eqref{eq:m1}-\eqref{eq:m2}. The existence of this system will be proved by using monotone convergence theorem together with the maximum principle. An approximate sequence is constructed in a similar way as in \cite{clouet2009milne,kelley1996existence} and the maximum principle is used to show the constructed sequence is nondecreasing and bounded. The uniqueness is proved by a contradiction argument. A weighted energy estimate is derived and the exponentially convergence of the solutions is established.
The above theorem gives the existence of bounded solutions for system \eqref{eq:m1}-\eqref{eq:m2} and shows that $T$ converges exponentially to some nonnegative constant $T_\infty$ as $x \to \infty$, and provides an estimate of the limit of solutions at infinity. However, the uniqueness of solutions is not guaranteed by this theorem.

 The uniqueness of solutions to system \eqref{eq:m1}-\eqref{eq:m2} is related to the linear stability of the system. The linear stability relies on the following spectral assumption.
 \begin{framed} \textbf{\underline{Spectral Assumption} :}
    \begin{enumerate}[label=({\bf\Alph*})]
        \item \label{asA} %Let $0<\beta<1$, 
        We say that the function $T \in C^1(\mathbb{R}_{+})$ satisfies the spectral assumption if there exists  constants $\beta_{0}>0$ and $0<M<1$, such that 
        \begin{align}
            %  \int_0^\infty  36 T |\partial_x T|^2 f^2 dx \le M \int_0^\infty 4T^3|\partial_x f|^2 dx ,\\
            % \int_0^\infty 4T^3|\partial_x f|^2 dx \ge M_{T} \int_0^\infty  36 T |\partial_x T|^2 f^2 dx,
            % \int_0^\infty  e^{2\beta x} 4T^3 |\partial_x f|^2 dx + \int_0^\infty e^{2\beta x} \partial_x(2T^3) \partial_x f^2 dx \ge 0,
            % M\int_0^\infty e^{2\beta x}4T^3 |\partial_x f|^2 dx  \ge  \int_0^\infty  e^{2\beta x} 36 T|\partial_x T|^2 f^2 dx 
            M\int_0^\infty e^{2\beta_{0}x} (2T^{\frac32})^2 |\partial_x f|^2 dx  \ge  4\int_0^\infty  e^{2\beta_{0} x}  |\partial_x (2T^{\frac32})|^2 f^2 dx 
            \label{eq:spassump2}
        \end{align}
    % holds for some constant $M<1$ and
     for all measurable function $f \in C^{1}(\mathbb{R}_{+})$ with $f(0)=0$.
    % and %for all $0\le \beta \le \alpha$, where $\alpha$  is given in Theroem \ref{thm:ex}.
    % for any measurable function $g(x)$.%$g\in C^1(\mathbb{R}_+)$
    %  such that ${\underset{x\in \mathbb{R}_+}{\sup} e^{-\beta x} g(x) <+\infty \text{ for some }\beta \in (0,1)}$.
    \end{enumerate}
    \end{framed}
    To show how this spectral assumption implies linear stability, we study the following system, which is the linearization of system \eqref{eq:m1}-\eqref{eq:m2}.
     \begin{align}
    &\partial_x^2 g + \langle \phi - 4T^3 g \rangle=\langle S_1 \rangle,\label{eq:l-1/i}\\
    &\mu\partial_x \phi + (\phi - 4T^3 g ) = S_1,\label{eq:l-2/i}
\end{align}
supplemented with the following boundary conditions
\begin{align}
    &g(0) = 0, \label{eq:l-1b/i}\\
    &\phi(0,\mu) = \phi_b(\mu), \text{ for any } \mu \in(0,1]\label{eq:l-2b/i},
\end{align}
where $T$ is the solution to  system \eqref{eq:m1}-\eqref{eq:m2} and  the source term $S_1=S_1(x)$ is a given function that vanishes as $x\to \infty$. Define the linear operator $\mathcal{F}:\,\,\,\, L^p(\mathbb{R}_+)\times L^p(\mathbb{R}_+\times[-1,1]) \longrightarrow L^p(\mathbb{R}_+)\times L^p(\mathbb{R}_+\times[-1,1])$ (with $1\le p \le \infty$), given by  
\begin{align}\label{eq:opF}
    \mathcal{F}(g,\phi) := \left(\begin{array}{cc}
        \partial_x^2 g + \langle \phi - 4T^3 g\rangle \\
       - \mu\partial_x\phi - (\phi-4T^3 g)
    \end{array}\right).
\end{align} 
Then it will be shown later that the spectral assumption implies that the spectrum of $\mathcal{F}$ all lie in the space $\Re z \le 0$. However, similar as the linear transport operator studied in \cite{bardos1984diffusion}, we can derive a weighted estimate on the solutions of \eqref{eq:l-1/i}-\eqref{eq:l-2/i} that leads to the exponentially decay. The weighted estimate relies on the following inequality 
\begin{align}
    \int_0^\infty e^{2\beta x} 4T^3 |\partial_x f|^2 dx + \int_0^\infty \partial_xe^{2\beta x} (4T^3)f\partial_x f dx \ge 0, %\quad \text{for any } f \in C_{\rm loc}(\mathbb{R}_+) \quad{ with }f(0)=0.
\end{align}
for any $f\in C^1_{\rm loc}(\mathbb{R}_+)$ satisfying $f(0)=0$, where $\beta$ is the same as in the spectral assumption \ref{asA}. It will be proved that the above inequality is a direct consequence of the spectral assumption \ref{asA} (see Lemma \ref{LemmaSA} for the details). The weighted estimates allows us to prove the 
% A sufficient condition for the above linear operator with absorbing boundary conditions to have only spectra lying in the space $\Re z\le 0$ is the following spectral assumption.
% The second result of this paper is the
 existence, uniqueness and exponential decay for solutions to the linearized system \eqref{eq:l-1/i}-\eqref{eq:l-2/i}.
\begin{theorem}\label{thm2}
    Suppose $T$ satisfies the spectral assumption \ref{asA} for some constants $0<\beta<1$ and $M>0$. Assume $S_1$ decays to zero exponentially such that $\int_0^\infty e^{2\beta x} |S_1|^2 dx <\infty$.
    % for some constant $0<\beta < \frac 12$. 
    Then, there exists a unique bounded solution $\left(g,\phi\right) \in L^2_{\rm loc}(\mathbb{R}_+)\times L^2_{\rm loc}(\mathbb{R}_{+}\times [-1,1])$
    %  C(\mathbb{R}_+ )\times C(\mathbb{R}_+ \times [-1,1]))$ 
     to system \eqref{eq:l-1/i}-\eqref{eq:l-2/i} with boundary conditions \eqref{eq:l-1b/i}-\eqref{eq:l-2b/i},
     and the solution satisfies
     \begin{align}\label{eq:el/2}
        &\int_0^\infty \int_{-1}^1  e^{2\beta x} (\phi-4T^3 g)^2 d\mu dx + \frac{1}{1-\beta}\int_{-1}^0 |\mu| (\phi(0,\cdot))^2 d\mu \nonumber\\
        &\qquad\le \frac{1}{1-\beta} \int_0^1 \mu \phi_b^2 d\mu + \frac{2}{(1-\beta)^2} \|e^{\beta x}S_1\|_{L^2(\mathbb{R}_+)}^2, 
     \end{align}
     for any $x\in\mathbb{R}_+$.
    Moreover, there exists a constant $g_\infty \in \mathbb{R}$ 
    such that 
    \begin{equation}\label{eq:thm2.decay}
    \begin{aligned}
    	&|g(x)-g_\infty| \le N_{\beta} e^{-\beta x},\\
        &|\phi(x,\mu) - 4T_\infty^3 g_\infty | \le |\psi_b - 4T_\infty^3 g_\infty| e^{-\frac{x}{\mu}} + 4(T_b+2M_{\alpha})^2(4M_{\alpha}+T_b)N_{\beta} e^{-\beta x}\\
        &\qquad \qquad \qquad \qquad  + \frac{1}{1-2\mu\beta} e^{-\beta x}  \|e^{\beta x}S_1\|_{L^2(\mathbb{R}_+)},
        % + \frac{6(T_b+2M_{\alpha})^2M_{\alpha}N_{\beta}}{1-\alpha}e^{-\alpha x} \nonumber\\
        % &\qquad \qquad \qquad \qquad + \frac{(T_b+M_{\alpha})^3N_{\beta}}{(1-\beta)}e^{-\beta x} + \frac{1}{1-\beta} \|e^{\beta x}S_1\|_{L^2(\mathbb{R}_+)}  e^{-\beta x},
        ~\text{for }\mu>0,\\
        &|\phi(x,\mu) - 4T_\infty^3 g_\infty | \le 4(T_b+2M_{\alpha})^2(4M_{\alpha}+T_b)N_{\beta} e^{-\beta x},\\
        &\qquad \qquad \qquad \qquad  + \frac{1}{1-2\mu\beta} e^{-\beta x}  \|e^{\beta x}S_1\|_{L^2(\mathbb{R}_+)},~\text{for }\mu<0.
    \end{aligned}
\end{equation}
    where $$N_{\beta}=\frac{1}{\sqrt{2\beta}}\left(\frac{2}{3(1-\beta)}\int_0^1\mu\phi_b^2 d\mu + \frac{4}{3(1-\beta)^2} \|e^{\beta x} S_1\|_{L^2(\mathbb{R}_+)}^2 \right)^{\frac12},$$
    where $\alpha$ is the constant in Theorem \ref{thm:ex}. As a consequence,
    \begin{align}
        g_\infty \le N_{\beta}, \quad |g(x)|\le 2N_{\beta}.
    \end{align}
% for some constant $\beta \in (0,1)$ and a constant ${C>0}$ depending on ${\int_0^1 \mu (\phi_b - 4T_b^3 g_b)^2 d\mu}.$ Here $T_\infty$ is the value of $T$ when  $x\to\infty$, see Theorem \ref{thm:ex}.
% where $C>0$ is a constant depending on $\beta$ and ${\int_0^1 \mu (\phi_b - 4T_b^3 g_b)^2 d\mu}$, and $T_\infty$ is the constant given in Theorem \ref{thm:ex}.
    %Moreover, as $x\to \infty$, the solution $(g,\phi)$ converges exponentially to $(g_\infty,4T_\infty^3 g_\infty)$ where $g_\infty = \lim_{x\to\infty} g(x)$, $T_\infty = \lim_{x\to\infty} T(x)$.
\end{theorem}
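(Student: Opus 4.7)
The plan is to establish existence on a bounded interval $[0,B]$ first, derive a $B$-independent weighted estimate, and then pass to the limit $B\to\infty$. Since the system \eqref{eq:l-1/i}-\eqref{eq:l-2/i} is linear, existence on $[0,B]$---equipped, say, with the reflective conditions $\partial_x g(B)=0$ and $\phi(B,\mu)=\phi(B,-\mu)$---can be handled by standard means (variational formulation, Galerkin approximation, or a fixed-point argument in $L^2([0,B]\times[-1,1])$). The hard content is the weighted estimate \eqref{eq:el/2}, which relies on the spectral assumption \ref{asA} through its consequence Lemma \ref{LemmaSA}; the passage to the limit is then standard compactness once the weighted estimate is uniform in $B$.

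To prove the weighted estimate, I would multiply \eqref{eq:l-2/i} by $e^{2\beta x}(\phi-4T^3 g)$ and integrate over $\mathbb{R}_+\times[-1,1]$. After splitting $\mu\partial_x\phi = \mu\partial_x(\phi-4T^3 g) + \mu\partial_x(4T^3 g)$, integrating the first piece by parts in $x$ produces the boundary contribution $\tfrac12\int_0^1\mu\phi_b^2\, d\mu - \tfrac12\int_{-1}^0|\mu|\phi(0,\mu)^2\, d\mu$ (using $g(0)=0$ so that $\tilde\phi(0,\mu)=\phi(0,\mu)$) together with the bulk term $-\beta\int e^{2\beta x}\mu(\phi-4T^3 g)^2\, d\mu dx$; combined with the direct term $\int e^{2\beta x}(\phi-4T^3 g)^2\, d\mu dx$ and the trivial bound $|\mu|\le 1$, this gives the coercive factor $(1-\beta)$. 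After $\mu$-integration, the second piece collapses to $\int_0^\infty e^{2\beta x}\partial_x(4T^3 g)\partial_x g\, dx$, where the identity $\int_{-1}^1\mu\phi\, d\mu = \partial_x g$ (derived by $\mu$-averaging \eqref{eq:l-2/i} and comparing with \eqref{eq:l-1/i}, with the integration constant fixed by decay at infinity) is crucial. This cross term is precisely what the spectral assumption is designed to control: Lemma \ref{LemmaSA} states it is non-negative once the exponential weight is accounted for. A Cauchy-Schwarz bound on the source contribution $\int e^{2\beta x}S_1(\phi-4T^3 g)\, d\mu dx$, absorbed into the coercive term, then delivers \eqref{eq:el/2}.

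Uniqueness follows from the weighted estimate applied to the difference of two solutions with vanishing data and source: \eqref{eq:el/2} forces $\phi-4T^3 g\equiv 0$ and $\phi(0,\mu)=0$ for $\mu<0$, and combined with the homogeneous boundary condition at $\mu>0$ and the transport equation this forces $\phi\equiv 0$; then $T^3 g\equiv 0$ together with $g(0)=0$, boundedness, and $\partial_x^2 g=0$ forces $g\equiv 0$. For the decay \eqref{eq:thm2.decay}, first observe that $\partial_x g = \int_{-1}^1\mu\phi\, d\mu = \int_{-1}^1\mu(\phi-4T^3 g)\, d\mu$, so $|\partial_x g|^2\le \tfrac23\int_{-1}^1(\phi-4T^3 g)^2\, d\mu$ and \eqref{eq:el/2} yields a bound on $\|e^{\beta x}\partial_x g\|_{L^2(\mathbb{R}_+)}^2$ with exactly the constants appearing in $N_\beta^2$. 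Writing $g(x)-g_\infty = -\int_x^\infty\partial_x g(y)\, dy$ (where $g_\infty=\lim_{x\to\infty}g(x)$ exists because $\partial_x g\in L^1$ by Cauchy-Schwarz against $e^{-\beta y}$) and applying Cauchy-Schwarz with the weight $e^{-2\beta y}$ produces the factor $(2\beta)^{-1/2}$ and gives $|g(x)-g_\infty|\le N_\beta e^{-\beta x}$. The decay of $\phi$ follows from the Duhamel representation along characteristics of \eqref{eq:l-2/i}, treating $4T^3 g - S_1$ as a known source and splitting into $\mu>0$ and $\mu<0$ exactly as in the proof of Theorem \ref{thm:ex}.

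The main obstacle is the control of the cross term $\int e^{2\beta x}\partial_x(4T^3 g)\partial_x g\, dx$: expanding gives $12\int e^{2\beta x}T^2(\partial_x T)g\partial_x g\, dx + 4\int e^{2\beta x}T^3(\partial_x g)^2\, dx$, whose first summand has no definite sign because $\partial_x T$ may change sign along the half-line. Without a structural hypothesis this term would destroy coercivity; the spectral assumption \ref{asA} is precisely the quantitative statement that, after combining with the weight-generated terms, the overall quadratic form in $g$ and $\partial_x g$ remains non-negative, which is the content of Lemma \ref{LemmaSA}. Given this, the remaining bookkeeping---tracking the constants $(1-\beta)$, $2/(1-\beta)^2$, $2/(3(1-\beta))$ appearing in \eqref{eq:el/2} and in $N_\beta$---parallels the nonlinear estimate \eqref{eq:es-2}.
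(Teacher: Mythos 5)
Your proposal is correct and follows essentially the same route as the paper: existence on $[0,B]$ with the reflective conditions, a weighted energy estimate whose only non-obvious term is the cross term $\int_0^\infty e^{2\beta x}\partial_x(4T^3 g)\partial_x g\,dx$ controlled by Lemma \ref{LemmaSA}, the identity $\partial_x g=\langle\mu\phi\rangle$, passage to the limit $B\to\infty$, Duhamel for the decay of $\phi$, and the energy estimate for uniqueness; your multiplier $e^{2\beta x}(\phi-4T^3g)$ is an algebraically equivalent repackaging of the paper's pair of multipliers $e^{2\beta x}4T^3 g$ and $e^{2\beta x}\phi$ and reproduces the constants in \eqref{eq:el/2} and $N_\beta$ exactly. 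The one step you treat as routine but which the paper has to work for is existence on $[0,B]$: since $0$ lies in the spectrum and the zeroth-order coefficient $4T^3$ is variable, the paper does not use a generic variational or Galerkin scheme but reduces to a single nonlocal equation for $g$ via the explicit transport solution and proves a contraction in the weighted space $L^2_{16T^6}([0,B])$, with both the invariance of that space and the contraction constant again resting on the spectral assumption — so "standard means" undersells this step, though it does not invalidate your overall argument.
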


Although the spectral assumption \ref{asA} implies the uniqueness for the linearized system \eqref{eq:l-1/i}-\eqref{eq:l-2/i}, we are not able to establish such a uniqueness result for the nonlinear Milne problem \eqref{eq:m1}-\eqref{eq:bd2}. This is partly because the spectral assumption \ref{asA} is not additive, which means if $T_1$ and $T_2$ satisfy the spectral assumption, $T_1+T_2$ is not known to satisfy the spectral assumption. In order to show the uniqueness of the nonlinear Milne problem, we need a more restrictive condition. Inspired by the generalized Hardy's inequality, we propose the following condition:
\begin{flalign}\label{eq:A2}
    \textbf{(A0)}  &&A_0^2 :=  \sup_{r\in (0,\infty)}\left(\int_r^\infty e^{2\beta x} 36 T|\partial_x T|^2 dx \cdot \int_0^r \frac{e^{-2\beta x}}{4T^3} dx\right) < \frac12.
\end{flalign}
By using the generalized Hardy's inequality, this condition is shown to be a sufficient condition for the spectral assumption \ref{asA} (see Lemma \ref{lem.spcond}). Moreover, when $A_0^2 \ge 2$, the spectral assumption fails to hold.

The benefits of the condition \textbf{(A0)} is that if $T_1$ and $T_2$ satisfies \textbf{(A0)}, then $w:=(T_1^4-T_2^4)/(T_1-T_2) = T_1^3 + T_2^3 + T_1^2 T_2 + T_2^2T_1$
% $ \sim 4T_{1}^3 \sim 4T_{2}^3$ 
satisfies the spectral assumption with $T^{\frac32}$ replaced by  $\tfrac12 w^{\frac12}$. This property, together with the uniqueness of \eqref{eq:l-1/i}-\eqref{eq:l-2/i}, implies the uniqueness for the nonlinear Milne problem \eqref{eq:m1}-\eqref{eq:m2}.

\begin{theorem}\label{thm3}
    Let $(T,\psi)$ and $(T_1,\psi_1)$ be two solutions to the nonlinear Milne problem \eqref{eq:m1}-\eqref{eq:m2} satisfying the weighted estimate \eqref{eq:es-2}. Suppose the function $T$ satisfy the spectral assumption \ref{asA}. Then if $(T_1,\psi_1) \in \mathcal{V}$ with % belongs to the set
    % Let $(T,\psi)$ be the solution obtained in Theorem \ref{thm:ex} to the nonlinear Milne problem \eqref{eq:m1}-\eqref{eq:bd2}. Suppose $T$ is a solution {\color{red} obtained in Theorem \ref{thm:ex}} and satisfies the inequality \eqref{eq:A2} for some positive constant $\beta>0$.    
    % Then the solution is unique in the set 
    \begin{align}\label{eq:spV}
        \mathcal{V} = \{(G,\phi) &\in L^2_{\rm loc}(\mathbb{R}_+)\times L^2_{\rm loc}(\mathbb{R}_+\times[-1,1]):\nonumber\\
        &\|G-T\|_{L^2(\mathbb{R}_+)} \le \eps\},
    \end{align} 
    for $\eps>0$ sufficiently small. Then, $T_1(x)=T(x)$ and $\psi_1(x,\mu)=\psi(x,\mu)$
    % we have $(T_1,\psi_1)= (T,\psi)$
     for $(x,\mu)\in \mathbb{R}_+\times[-1,1]$ almost everywhere.
    
    Furthermore, if the condition $(T_1,\psi_1) \in \mathcal{V}$ is replaced by the condition 
    \begin{align}\label{eq:ass2}
        \frac12 \int_0^1 \mu(\psi_b-T_b^4)^2 d\mu \le C_b,
    \end{align}
    for some constant $C_b$ (given in \eqref{eq:Cb}). Then, $T_1(x)=T(x)$ and $\psi_1(x,\mu)=\psi(x,\mu)$
    % we have $(T_1,\psi_1)= (T,\psi)$
     for $(x,\mu)\in \mathbb{R}_+\times[-1,1]$ almost everywhere, and the solution of the nonlinear Milne problem \eqref{eq:m1}-\eqref{eq:m2} is unique.
    %  Then, we obtain the uniqueness of the solution of the nonlinear Milne problem \eqref{eq:m1}-\eqref{eq:m2}. i.e, $(T_1,\psi_1)= (T,\psi)$ for $(x,\mu)\in \mathbb{R}_+\times[-1,1]$ almost everywhere.

    % Furthermore, if the boundary condition satisfies 
    % \begin{align}\label{eq:ass2}
    %     \frac12 \int_0^1 \mu(\psi_b-T_b^4)^2 d\mu \le C_b,
    % \end{align}
    % for some constant $C_b$ (given in \eqref{eq:Cb}). Then the solution {\color{red} obtained by Theorem \ref{thm:ex}} is unique. 

%     Suppose $\left(T,\psi\right)\in H_{\rm loc}^1(\mathbb{R}_+)\times L_{\rm loc}^2(\mathbb{R}_{+}\times [-1,1])$ is a solution to system \eqref{eq:m1}-\eqref{eq:m2}  satisfying \eqref{eq:A2}. Then, there exists a neighborhood $\mathcal{V}$ of $\left(T,\psi\right)$, given by
%     \begin{align*}
%         \mathcal{V}= \Big \{\left(T_1,\psi_{1}\right)\in H_{\rm loc}^1(\mathbb{R}_+)\times L_{\rm loc}^2(\mathbb{R}_{+}\times [-1,1]): \|T_1-T\|_{H^1(\mathbb{R}_+)} \le C \varepsilon \Big\},
%     \end{align*}
%  such that for sufficiently small $\varepsilon>0$, the solution $\left(T,\psi\right)$ is unique in $\mathcal{V}$. On the other hand, there exists a constant $C_b>0$ only depending on $\alpha$, $T_b,$ such that if 
%     \begin{align}\label{eq:ass2}
%         \frac12 \int_0^1 \mu (\psi_b - T_b^4)^2 d\mu \le C_b, 
%     \end{align}
% then the solution to system \eqref{eq:m1}-\eqref{eq:m2} satisfies the spectral assumption \eqref{eq:spassump2} and is unique.
\end{theorem}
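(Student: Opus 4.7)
The plan is to reduce uniqueness for the nonlinear problem to the linear existence and uniqueness result of Theorem \ref{thm2} applied to the difference of two solutions. Set $g := T_1 - T$ and $\phi := \psi_1 - \psi$. Subtracting \eqref{eq:m1}--\eqref{eq:m2} for the two solutions and using the factorization
\begin{equation*}
T_1^4 - T^4 = w\, g, \qquad w := T_1^3 + T_1^2 T + T_1 T^2 + T^3,
\end{equation*}
yields the system
\begin{align*}
\partial_x^2 g + \langle \phi - w g\rangle &= 0,\\
\mu \partial_x \phi + (\phi - w g) &= 0,
\end{align*}
on $\mathbb{R}_+\times[-1,1]$ with $g(0)=0$ and $\phi(0,\mu)=0$ for $\mu\in(0,1]$. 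Structurally this is the linearized system \eqref{eq:l-1/i}--\eqref{eq:l-2/i} with $4T^3$ replaced by $w$, $S_1\equiv 0$ and $\phi_b\equiv 0$. If the weighted estimate \eqref{eq:el/2} can be carried out with the new coefficient $w$, its right-hand side vanishes, forcing $\phi\equiv wg$ and ultimately $g\equiv 0$, $\phi\equiv 0$, which is the claimed uniqueness.

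The main task -- and the principal obstruction the paper flags, since the spectral assumption is not additive -- is to verify \eqref{eq:spassump2} with $2T^{3/2}$ replaced by $w^{1/2}$, as noted in the paper's remark that $w$ satisfies the spectral assumption with $T^{3/2}$ replaced by $\tfrac12 w^{1/2}$. Under the neighborhood hypothesis $\|T_1-T\|_{L^2(\mathbb{R}_+)}\le\eps$, I would combine the uniform bound $T,T_1\le T_b+2M_\alpha$ of Theorem \ref{thm:ex} with the weighted estimate \eqref{eq:es-2} satisfied by both solutions, decompose $w = 4T^3 + (w-4T^3)$ and $\partial_x w^{1/2} = \partial_x(2T^{3/2})+\partial_x(w^{1/2}-2T^{3/2})$, and control each correction by positive powers of $g$ in $L^2$ using the $L^\infty$ bounds on $T$ and $T_1$. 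For $\eps$ small enough, this perturbs the spectral inequality assumed for $T$ into one for $w$ with slightly worse constant $M'<1$, allowing the proof of Theorem \ref{thm2} to be rerun verbatim with $w$ in place of $4T^3$.

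For the boundary-data-smallness statement, I would invoke Lemma \ref{lem.spcond}, which shows that the quantitative criterion \textbf{(A0)} with $A_0^2<1/2$ implies the spectral assumption. The weighted estimate \eqref{eq:es-2} controls $\int_0^\infty e^{2\alpha x}4T^3|\partial_x T|^2\,dx$ by the boundary discrepancy, and combined with the $L^\infty$ bound on $T$ from Theorem \ref{thm:ex} this bounds the first factor of $A_0^2$ appearing in \eqref{eq:A2}; the second factor is handled using the exponential convergence $T\to T_\infty$ from \eqref{eq:thm.ex}. Choosing $C_b$ in \eqref{eq:ass2} small enough makes $A_0^2<1/2$ hold uniformly for both $T$ and $T_1$, so both satisfy the spectral assumption. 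Because $w$ is the symmetric combination $T^3+T_1^3+T^2 T_1+T T_1^2$, the same Hardy-type computation extends to $w^{1/2}$ in place of $2T^{3/2}$, with the sup-product in \textbf{(A0)} remaining below $1/2$ for the hybrid coefficient. Applying Theorem \ref{thm2} to the difference system then yields $g\equiv 0$ and $\phi\equiv 0$, establishing uniqueness. The critical difficulty throughout is precisely this transfer of the spectral inequality from the pure coefficient $4T^3$ to the hybrid coefficient $w$, handled in the first case by $L^2$-smallness combined with $L^\infty$ bounds and in the second by the stability of the quantitative Hardy criterion under symmetric combinations of admissible temperatures.
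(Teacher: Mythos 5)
Your proposal is correct and follows essentially the same route as the paper: reduce to the linearized system for the difference with coefficient $w=(T_1^4-T^4)/(T_1-T)$, transfer the spectral assumption from $4T^3$ to $w$ (via the quantitative Hardy criterion \textbf{(A0)} in the almost-well-prepared case, and via perturbation of that criterion in the small-neighborhood case), and then invoke the uniqueness part of Theorem \ref{thm2}. This is precisely the structure of the paper's Lemmas \ref{lem.spcond}, \ref{lm:wp} and \ref{lm:sm} and the two concluding arguments in Section \ref{proofThm3}.
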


 Here we are only able to establish the uniqueness of the solution of the nonlinear Milne problem for solutions satisfying the weighted estimate \eqref{eq:es-2} or under the smallness assumption \eqref{eq:ass2}. Indeed, the assumption \eqref{eq:ass2} is satisfied for almost well-prepared boundary data and the uniqueness of the nonlinear Milne problem \eqref{eq:m1}-\eqref{eq:m2} is proved for this case. However, the uniqueness of the solution of the nonlinear Milne problem  for general boundary data remains an open problem.
\subsection{Organizations}
The paper is organized as follows. In section 2, the existence of solutions to system \eqref{eq:m1}-\eqref{eq:m2} is proved in a bounded interval, in Theorem \ref{thm.bd}. To prove this theorem, the monotonicity for the second order ODE is shown in \eqref{eq:m1} in section \ref{subMonot} and Theorem \ref{thm2} is proved in section \ref{SubExUni} for existence and uniqueness, and section \ref{sec:weB}, for the weighted estimate. % and \eqref{eq:m2} are shown in Lemma \ref{lm:T} and Lemma \ref{lm:psi}, respectively.
% existence and uniqueness is shown in section \ref{SubExUni}. Moreover, a weighted estimate is derived in section \ref{sec:weB}.
% , where an approximate monotonic bounded sequence of function is constructed and its limit is shown to satisfy system \eqref{eq:m1}-\eqref{eq:m2} on a bounded interval $[0,B]$. 
Section \ref{proofTh1} is devoted to  the proof of Theorem \ref{thm:ex}. The existence of solutions and weighted estimates are shown in Section \ref{ProofEx1} and section \ref{ProofWEightEst1}, respectively, both by passing to the limit $B\to\infty$ in the solutions on the bounded interval $[0,B]$. The exponential decay  property \eqref{eq:thm.ex} of solutions is shown in section \ref{ProofExpo1}. Section \ref{LMB0} is devoted to the study of the linearized Milne problem. Followed by a discussion on the spectral assumption \ref{asA} in section  \ref{LMB1}, the existence of weak solutions to the linearized Milne problem on a bounded interval $[0,B]$ is shown in section  \ref{LMB2}. The solution is then extended to the half-space, with existence proved in section \ref{LMB3}, weighted estimate and decay proved in section {LMB4}, uniqueness in \ref{LMB5}, thus proving Theorem \ref{thm2}. 
% and Theorem \ref{thm2} is proved in section  \ref{LMB3} by passing to the limit $B\to\infty$.
 Finally, we prove Theorem \ref{thm3} in section \ref{proofThm3}, where the sufficient condition \ref{eq:A2} is shown to imply the spectral assumption \ref{asA} in section \ref{51} and the uniqueness of solutions for system \eqref{eq:m1}-\eqref{eq:m2} is established for the case \eqref{eq:ass2} in section \ref{52} and the case $T_2 \in \mathcal{V}$ in section \ref{smalS} . 

\section{Existence on the bounded interval}\label{section2}%for the nonlinear Milne problem}
%%%%%%%%%%%%%%%%%%%%%%%%%%%%%%%%%%%%%%%%%%%%%%%%
%%%%%%%%%%%%%%%%%%%%%%%%%%%%%%%%%%%%%%%%%%%%%%%%
%%%%%%%%%%%%%%%%%%%%%%%%%%%%%%%%%%%%%%%%%%%%%%%%
In this section,
%  we prove the existence of weak solutions to system \eqref{eq:m1}-\eqref{eq:m2} on a bound interval.
%  We first prove the existence of bounded continous solutions in a bounded interval $[0,B]$ with $B>0$ being a constant. Then we pass to the limit $B \to \infty$ to prove the existence of solution for system \eqref{eq:m1}-\eqref{eq:m2}.
% \subsection{Existence on the bounded interval}
we consider system \eqref{eq:m1}-\eqref{eq:m2} on the bounded interval $x\in [0,B]$:
\begin{align}
    &\partial_x^2 T^B + \langle \psi^B - (T^B)^4 \rangle = 0,\label{eq:B1}\\
    &\mu \partial_x \psi^B + \psi^B - (T^B)^4 = 0,\label{eq:B2}
\end{align}
associated to the following boundary conditions 
\begin{align}
    &T^B(0) = T_b, \quad \partial_x T^B(B) = 0,\label{eq:B1b}\\
    &\psi^B(0,\mu) = \psi_b(\mu), \quad \psi^B(B,\mu)  = \psi^B(B,-\mu), \quad \text{ for any }\mu \in (0,1].\label{eq:B2b}
\end{align}
The boundary conditions at $x=B$ are motivated by the fact that solutions to the nonlinear Milne problem \eqref{eq:m1}-\eqref{eq:m2} are expected to converge to some constants at infinity.
% The existence for system \eqref{eq:m1}-\eqref{eq:m2} on bounded intervals with Dirichlet boundary conditions has been proved in \cite{kelley1996existence} by using the fixed point theorem and the maximum principle. However, here we  
If the right boundary conditions at $x=B$ are replaced by Dirichlet boundary conditions, the existence for the above system was proved in \cite{kelley1996existence} by using the fixed point theorem together with the maximum principle. However, here we do not know the value of $T^B,\psi^B$ on the boundary $x=B$. The method of \cite{kelley1996existence} cannot be used to show the existence for the above system. A different approach based on the theory of subsolutions and fixed point theorems is used here to prove the existence of the solution for system \eqref{eq:B1}-\eqref{eq:B2}. The theory of subsolutions is used in \cite{clouet2009milne} to show the existence of solutions to system \eqref{eq:m1}-\eqref{eq:m2} without the diffusion operator $\partial_x^2 T$. 
% Here we include this in equation \eqref{eq:B1} and the property of solutions to \eqref{eq:B1} is needed.
Here we will study the property of solutions for \eqref{eq:B1} and show the existence for system \eqref{eq:B1}-\eqref{eq:B2}. The existence theorem reads as follows.
\begin{theorem}\label{thm.bd}
    Assume $ 0\le T_b \le \gamma$, $0\le \psi_b=\psi_b(\mu) \le  \gamma^4$ for any $\mu \in(0,1]$  and for some constant $\gamma>0$. Then there exists a unique solution $\left(T^B,\psi^B \right) \in C^2([0,B])\times C^1([0,B]\times [-1,1])$ to system \eqref{eq:B1}-\eqref{eq:B2} with boundary conditions \eqref{eq:B1b}-\eqref{eq:B2b},
    % satisfying $T^B \in C^2([0,B]),$ $\psi^B \in C^1([0,B]\times [-1,1])$ and 
and the solution 
% to \eqref{eq:B1}-\eqref{eq:B2b} 
satisfies
    \[
    0\le T^B(x) \le \gamma, \quad  0\le \psi^B(x,\mu) \le \gamma^4,\quad \text{for any }x\in[0,B],\, \mu \in [-1,1].
    \]
    Furthermore, the following estimate
    \begin{align}\label{eq:estB2}
        \int_0^{B} & e^{2\alpha x} 4 (T^B)^3 |\partial_x T^B|^2 dx + (1-\alpha )\int_0^{B}\int_{-1}^1 e^{2\alpha x} (\psi^B - (T^B)^4)^2 d\mu dx \nonumber\\
         &+ \frac12 \int_{-1}^0 |\mu|  (\psi^B(0,\cdot) - T_b^4)^2 d\mu \le  \frac12 \int_0^1 \mu (\psi_b-T_b^4)^2 d\mu,% \quad \text{for any } x\in [0,B],%\quad \forall \left(x,\alpha\right)\in [0,B]\times [0,1).
    \end{align}
    holds for any $\alpha \in [0,1),$
    % \begin{align}\label{eq:estB2}
    %     &\int_0^{B} e^{2\alpha x} 4 (T^B)^3 |\partial_x T^B|^2 dx + (1-\alpha )\int_0^{B}\int_{-1}^1 e^{2\alpha x} (\psi^B - (T^B)^4)^2 d\mu dx \nonumber\\
    %     &\quad +\int_0^{B} \frac12e^{2\alpha x} \int_{-1}^1 \mu (\psi^B(x,\cdot)-(T^B(x))^4)^2 d\mu dx  + \frac12 \int_{-1}^0 |\mu|  (\psi^B(0,\cdot) - T_b^4)^2 d\mu \nonumber \\
    %     &\qquad = \frac12 \int_0^1 \mu (\psi_b-T_b^4)^2 d\mu
    % \end{align}
where $\int_{-1}^1 \mu (\psi^B(x,\cdot)-(T^B(x))^4)^2 d\mu \ge 0$ for any $x\in [0,B]$.
\end{theorem}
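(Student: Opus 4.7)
My plan is a three-step argument: build a monotone approximating sequence between the sub/supersolution pair $(0,0)$ and $(\gamma,\gamma^4)$, pass to the limit via PDE regularity, and then derive \eqref{eq:estB2} by a direct energy computation on the transport equation. Given $(T^k,\psi^k)\in[0,\gamma]\times[0,\gamma^4]$, I would define $(T^{k+1},\psi^{k+1})$ by solving two decoupled subproblems. First, the linear transport equation
\begin{equation*}
    \mu\partial_x\psi^{k+1} + \psi^{k+1} = (T^k)^4,\quad \psi^{k+1}(0,\mu)=\psi_b(\mu)\text{ for }\mu>0,\quad \psi^{k+1}(B,\mu)=\psi^{k+1}(B,-\mu)\text{ for }\mu<0,
\end{equation*}
which is solved explicitly along characteristics (the reflection at $x=B$ closes the system by linking the $\mu<0$ inflow to the $\mu>0$ outflow). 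Second, the scalar nonlinear ODE
\begin{equation*}
    -\partial_x^2 T^{k+1} + 2(T^{k+1})^4 = \langle\psi^{k+1}\rangle,\qquad T^{k+1}(0)=T_b,\ \partial_x T^{k+1}(B)=0,
\end{equation*}
which is uniquely solvable in $C^2([0,B])$ since $u\mapsto 2u^4$ is strictly increasing. Both subproblems are order-preserving: the characteristic formula respects monotonicity of the source $(T^k)^4$, and the ODE comparison principle (to be established in subsection \ref{subMonot}) respects monotonicity of $\langle\psi^{k+1}\rangle$. Verifying that $(0,0)$ is a subsolution and $(\gamma,\gamma^4)$ a supersolution, an induction produces $0\le T^k\le T^{k+1}\le\gamma$ and $0\le\psi^k\le\psi^{k+1}\le\gamma^4$ pointwise.

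\textbf{Limit, regularity and uniqueness.} By the monotone convergence theorem there exist pointwise limits $(T^B,\psi^B)$ with the stated bounds. Standard Schauder estimates for the elliptic iterate $T^{k+1}$ together with the characteristic representation for $\psi^{k+1}$ yield uniform $C^{2,\alpha}([0,B])\times C^1([0,B]\times[-1,1])$ bounds; combining Beppo Levi's lemma with Dini's theorem upgrades the monotone pointwise convergence to uniform convergence, so the equations and all boundary conditions pass to the limit and $(T^B,\psi^B)$ is a classical solution. For uniqueness, I would run the same iteration from above starting at $(\gamma,\gamma^4)$ to obtain a non-increasing sequence converging to a maximal solution; any solution is sandwiched between the minimal and maximal, and an energy estimate for the difference exploiting $T_1^4-T_2^4 = W(T_1-T_2)$ with $W\ge 0$, together with the homogeneous contributions $U(0)=0$, $\Phi(0,\mu)|_{\mu>0}=0$, and the reflection at $x=B$, forces the two extremal solutions to coincide.

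\textbf{Weighted estimate and the closing nonnegativity.} For \eqref{eq:estB2} I would multiply \eqref{eq:B2} by $2e^{2\alpha x}(\psi^B-(T^B)^4)$ and integrate over $[0,B]\times[-1,1]$. The algebraic core is the identity $\int_{-1}^1 \mu(\psi^B-(T^B)^4)\,d\mu = \partial_x T^B$, obtained by integrating \eqref{eq:B2} in $\mu$, comparing with \eqref{eq:B1}, and using $\partial_x T^B(B)=0$ to fix the integration constant. Writing $\partial_x\psi^B = \partial_x(\psi^B-(T^B)^4)+4(T^B)^3\partial_x T^B$ and integrating by parts in $x$, the boundary contribution at $x=B$ vanishes by reflection (it is $\mu$ times something even in $\mu$), the $x=0$ boundary contribution splits into $-\int_0^1 \mu(\psi_b-T_b^4)^2 d\mu + \int_{-1}^0 |\mu|(\psi^B(0,\cdot)-T_b^4)^2 d\mu$, the weight derivative contributes $-2\alpha\int\int \mu e^{2\alpha x}(\psi^B-(T^B)^4)^2$ which is bounded below by $-2\alpha\int\int e^{2\alpha x}(\psi^B-(T^B)^4)^2$, and the cross term collapses via the identity to $\int 8 e^{2\alpha x}(T^B)^3|\partial_x T^B|^2 dx$; rearrangement then gives \eqref{eq:estB2} with the factor $(1-\alpha)$. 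The final claim $\int_{-1}^1 \mu(\psi^B-(T^B)^4)^2 d\mu \ge 0$ for every $x$ follows from the differential inequality
\begin{equation*}
    \partial_x\!\int_{-1}^1 \mu(\psi^B-(T^B)^4)^2 d\mu = -2\!\int_{-1}^1(\psi^B-(T^B)^4)^2 d\mu - 8(T^B)^3|\partial_x T^B|^2 \le 0,
\end{equation*}
combined with the vanishing of this quantity at $x=B$ by the reflection condition.

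\textbf{Main obstacle.} I expect the delicate step to be the preservation of the sub/supersolution structure through one full coupled step of the iteration, and closing the uniqueness argument: the mixed Neumann/reflection boundary at $x=B$ together with the bidirectional coupling between \eqref{eq:B1} and \eqref{eq:B2} blocks a routine scalar maximum principle, so one must combine the ODE comparison for $-\partial_x^2 T + 2T^4 = f$ established in subsection \ref{subMonot} with the sign-preserving nature of the transport solution operator, and handle the $W$-dependent cross terms in the difference estimate using the nonnegativity of $W$ and the boundary structure.
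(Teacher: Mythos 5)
Your existence construction (monotone iteration between the sub/supersolution pair $(0,0)$ and $(\gamma,\gamma^4)$, Beppo Levi plus Dini to upgrade to uniform convergence) and your weighted estimate are essentially the paper's argument; in particular your single multiplier $2e^{2\alpha x}(\psi^B-(T^B)^4)$ on \eqref{eq:B2} produces, after the flux identity $\partial_x T^B=\langle\mu\psi^B\rangle$, exactly the same identity as the paper's pair of multipliers $e^{2\alpha x}(T^B)^4$ and $e^{2\alpha x}\psi^B$, and your monotone-decrease proof of $\int_{-1}^1\mu(\psi^B-(T^B)^4)^2d\mu\ge 0$ is the paper's. One small repair: your claim that $u\mapsto 2u^4$ is strictly increasing is false on $\mathbb{R}$, and the monotonicity of the nonlinearity for \emph{negative} arguments is used in the maximum-principle step that rules out $T^{k+1}<0$ (and in the Leray--Schauder a priori bound); the paper handles this by replacing $T^4$ with a globally strictly increasing truncation $\phi(T)$ that agrees with $T^4$ on $[0,\gamma]$, and you need the same device or an independent positivity argument.

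The genuine gap is uniqueness. Your plan is to sandwich any solution between a minimal and a maximal solution and then close with ``an energy estimate for the difference exploiting $T_1^4-T_2^4=W(T_1-T_2)$, $W\ge 0$.'' That energy estimate does not close: pairing the difference equations with $(WU,\Phi)$ produces, after integration by parts, the cross term $\int_0^B \partial_x W\,U\,\partial_x U\,dx$, which has no sign because $W$ is not monotone; controlling precisely this term is what forces the spectral assumption \ref{asA} in the half-line linearized theory, so it cannot be waved through on $[0,B]$ either. The paper's bounded-interval uniqueness avoids energy methods entirely: starting the monotone iteration from $(\max(T_1^B,T_2^B),\max(\psi_1^B,\psi_2^B))$ (which is a subsolution) yields a solution $(T^B,\psi^B)$ dominating both, and then the nonnegative difference $g^B=T^B-T_1^B$, $\varphi^B=\psi^B-\psi_1^B$ is handled by a sign contradiction: the flux identity gives $\partial_x g^B(0)=\int_{-1}^0\mu\,\varphi^B(0,\mu)\,d\mu$, while the characteristic representation \eqref{eq:sol/mu-} shows $\varphi^B(0,\mu)>0$ for $\mu<0$ whenever $T^B>T_1^B$ on some interval, forcing $\partial_x g^B(0)<0$, incompatible with $g^B\ge 0$ and $g^B(0)=0$. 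Your sandwich setup is compatible with this, but the step you label ``energy estimate'' must be replaced by this ordering-plus-representation argument (which you gesture at under ``sign-preserving nature of the transport solution operator'' but do not carry out); as written the uniqueness claim is unproved.
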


The above existence theorem is proved by constructing a monotonic sequence of functions that converges to the solution of system \eqref{eq:B1}-\eqref{eq:B2}.
%  We now start by proving the monotonicity of solutions to equation \eqref{eq:B1}.
Before we construct the sequence, we first show the monotonicity property of equation \eqref{eq:B1}.

\subsection{Monotonicity of the second order ODE}\label{subMonot}
The monotonicity property of the second order ODE \eqref{eq:B1} is given in the following lemma.
\begin{lemma}\label{lm:T}
Given $0\le T_b\le \gamma$ for some constant $\gamma>0$. Let  $\phi$ be a strictly increasing bounded function on $\mathbb{R}$ and $g \in C([0,B])$ be a continuous bounded function satisfying $0\le g \le \phi(\gamma)$. Then there exists a unique bounded solution $T \in C^2([0,B])$ to the equation
    \begin{align}\label{eq:p2Tg}
        -\partial_x^2 T(x) +\phi(T(x)) = g(x), \quad \text{for any }x\in [0,B],
    \end{align}
    supplemented to the following boundary conditions 
    \begin{align*}
        T(0) = T_b, \quad \partial_x T(B)=0,
    \end{align*}
 and the solution satisfies $0\le T(x)\le \gamma$ for any $x\in[0,B]$. Moreover, let $T_1$, $T_2$ be two solutions to the above equation with source terms $g_1$, $g_2$ and boundary data $T_{b1}$, $T_{b2}$, respectively.  If $0\le g_1(x) \le g_2(x) \le \phi(\gamma)$ for all $x\in [0,B]$ and $0\le T_{b1} \le T_{b2}\le \gamma$, then $0\le T_1(x) \le T_2(x)\le \gamma$ for all $x\in [0,B]$.

 \end{lemma}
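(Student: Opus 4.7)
The plan is to deduce all three assertions---existence/uniqueness in $C^2$, the a priori bound $0\le T\le \gamma$, and the monotonicity statement---from a single comparison principle for the semilinear operator $\mathcal{L}T := -T''+\phi(T)$, combined with a standard variational existence argument.

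For existence and uniqueness, I would take the variational route. Setting $\Phi(s)=\int_0^s \phi(\tau)\,d\tau$, the strict monotonicity of $\phi$ makes $\Phi$ strictly convex, so the functional
\begin{equation*}
J(T) = \int_0^B \Bigl( \tfrac12 |T'|^2 + \Phi(T) - g T \Bigr)\,dx
\end{equation*}
is strictly convex and coercive on the affine space $\{T\in H^1(0,B): T(0)=T_b\}$---coercivity holds because $\Phi$ grows at most linearly (since $\phi$ is bounded), while the prescribed Dirichlet trace at $0$ together with Poincaré controls $\|T\|_{L^2}$ by $\|T'\|_{L^2}$. The direct method yields a unique minimizer whose Euler--Lagrange equation is precisely \eqref{eq:p2Tg} with the natural Neumann condition $\partial_x T(B) = 0$; continuity of $\phi$ and $g$ then bootstraps the minimizer up to $C^2([0,B])$.

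For the comparison principle, let $T_1,T_2$ be solutions with data $(g_i,T_{bi})$ satisfying $g_1\le g_2$ and $T_{b1}\le T_{b2}$, and set $U=T_2-T_1$. Subtracting the two equations,
\begin{equation*}
-U''+\bigl(\phi(T_2)-\phi(T_1)\bigr)=g_2-g_1 \ge 0, \qquad U(0)\ge 0, \qquad U'(B)=0.
\end{equation*}
Suppose, for contradiction, that $U$ attains a negative minimum at some $x_0\in(0,B]$. Strict monotonicity of $\phi$ forces $\phi(T_2(x_0))-\phi(T_1(x_0))<0$, hence $U''(x_0)<0$. At an interior minimum this contradicts $U''(x_0)\ge 0$; at $x_0=B$, strict concavity $U''(B)<0$ together with $U'(B)=0$ yields $U(B-h) > U(B)$ for small $h>0$ by Taylor expansion, contradicting the fact that $B$ is the minimum. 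Hence $U\ge 0$, proving $T_1\le T_2$.

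The a priori bounds then follow by applying the comparison to $T$ against the constant super-solution $\gamma$ (which satisfies $-\gamma''+\phi(\gamma)=\phi(\gamma)\ge g$ with $\gamma\ge T_b$), giving $T\le \gamma$, and against the zero function, which is a sub-solution in the intended setting where $\phi(0)\le 0$ (as holds for the Stefan--Boltzmann nonlinearity that this lemma will later be applied to). The main technical obstacle is the Neumann endpoint $x=B$: standard maximum-principle arguments presume Dirichlet data on the whole boundary, and Hopf's lemma does not directly apply, so one must rule out boundary extrema at $B$ by the Taylor-expansion argument sketched above---equivalently, by reflecting the problem across $x=B$ to reduce it to a purely Dirichlet problem on $[-B,B]$.
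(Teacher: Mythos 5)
Your proof is correct in substance but reaches existence by a genuinely different route. The paper constructs the solution via the Leray--Schauder fixed point theorem: it inverts $-\partial_x^2$ with the mixed boundary conditions through the explicit Green operator $\mathcal{G}$, and supplies the required a priori bound on the family $T=\sigma\mathcal{N}T$ by the same maximum-principle argument you use for comparison; it then proves uniqueness and monotonicity as two separate (nearly identical) contradiction arguments. You instead exploit the gradient structure of the nonlinearity: minimizing the strictly convex, coercive functional $J$ gives existence \emph{and} uniqueness in one stroke (any critical point of a strictly convex functional is the unique minimizer), with the Neumann condition at $B$ arising as the natural boundary condition, and a single comparison principle then delivers both the monotonicity claim and the bounds $0\le T\le\gamma$ by comparing against the constant solutions $0$ and $\gamma$ of the equation with data $(\phi(0),0)$ and $(\phi(\gamma),\gamma)$. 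This is tidier and avoids the compactness machinery; the fixed-point route is the one the paper can reuse verbatim for the transport equation and does not need $\phi$ to be monotone for the existence step. Your caveat that the lower bound $T\ge 0$ needs $\phi(0)\le 0$ is well taken: the paper's own proof has exactly the same unstated requirement (it writes $\phi(T(x_m))<\phi(0)<0$, which only works because the $\phi$ built in Theorem \ref{thm.bd} vanishes at $0$ and is negative on $(-\infty,0)$), so this is a defect of the lemma's statement rather than of either proof.

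One small slip: at the endpoint $x_0=B$, the Taylor expansion with $U'(B)=0$ and $U''(B)<0$ gives $U(B-h)=U(B)+\tfrac{h^2}{2}U''(B)+o(h^2)<U(B)$, not $U(B-h)>U(B)$ as you wrote. It is the inequality $U(B-h)<U(B)$ that contradicts $B$ being a minimum (your stated inequality would be consistent with a minimum). The argument is fine once the sign is corrected.
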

\begin{proof}
    \emph{{Existence}}.
    Let $h \in C[0,B]$, then the solution to the problem 
    \begin{align*}
        \left\{\begin{array}{ll}
        - \partial_x^2 f = h, \quad \text{on }[0,B],\\
         f(0) = 0, \quad \partial_x f(B)= 0
        \end{array}\right.
    \end{align*}
    can be explicitly written as 
    \begin{align}\label{ExF}
        f(x) = \int_0^x \int_t^B h(s) dsdt.
    \end{align}
    Denote the above mapping by $f=\mathcal{G}h$. Then $\mathcal{G}$ is a linear operator  from $C([0,B])$ to $C^2([0,B])$. We next show the mapping $\mathcal{N}$ defined by 
    \[
    \mathcal{N} T:=\mathcal{G}(g-\phi(T))+T_b\]
     has a fixed point. To show this, we recall the statement of the Leray-Schauder Theorem in appendix \ref{LSTheorem},  \cite[Theorem 11.3]{gilbarg2015elliptic}.
     % Let $\mathcal{N}$ be a compact mapping from a Banach space $D$ to itself, and suppose there exists a constant $K>0$ such that $\|f\|_{D} \le K$, for all $f\in D$ satisfying $f=\sigma \mathcal{N}f$ for all $\sigma  \in [0,1]$. Then $\mathcal{N}$ has a fixed point in $D$. 
     
\begin{itemize}
\item  We consider the Banach space  $D=C([0,B])$ equipped with the supremum norm 
     \[
     \|f\|_D:=\sup_{x\in [0,B]}|f(x)|.
     \]
Since $\mathcal{N}: C([0,B]) \to C^2([0,B])$ and $C^2([0,B])$ is compactly embedded in $C([0,B])$, where $C^2([0,B])$ is equipped with the norm 
\[
\|f\|_{C^2([0,B])} := \sup_{x\neq y}\dfrac{|f(x)-f(y)|}{|x-y|^2} + \|f\|_{C([0,B])},
\]
  $\mathcal{N}$ is a compact map from $C([0,B])$ to itself. 
  
  \item We now need to show that the set 
 \[
 \mathcal{A}:=\Big\{T \in C([0,B]):T= \sigma \mathcal{N}T,\,\,\, \text{for some }  \sigma \in [0,1] \Big\},
 \]
is bounded. To show this, suppose $T=\sigma \mathcal{N}T$, then 
    \begin{align} \label{27}
        T = \sigma(\mathcal{G}(g-\phi(T)) + T_b),
    \end{align}
    which satisfies 
    \begin{align}
       - \partial_x^2 T +  \sigma \phi(T(x)) = \sigma g(x),\, x\in [0,B]\quad\text{ and } T(0) = \sigma T_b,\, \partial_x T(B)=0.
    \end{align}
    We first show $T \ge 0$. We use a contradiction argument. Suppose the minimum of $T$ is less than zero. Then let $x_m \in [0,B]$ be the minimum point, $T(x_m)<0$.
    %  Due to $T(0) =T_b \ge 0$, $x_m \neq 0$. 
    % Suppose $T$ attains its minimum at $x_m$. If $x_m = 0$ then $T(x_m)=\sigma T_b \ge 0.$
    % %  $T(x) \ge \sigma T_b \ge 0$ for any $x\in [0,B]$. 
    % Otherwise  $x_m \in (0,B]$. 
    %  Assume $T(x_m) < 0$, then 
    By the monotonicity of $\phi$, $\phi(T(x_m)) < \phi(0)<0$. Hence 
    \begin{align*}
        -\partial_x^2 T(x_m) = \sigma (g(x_m) - \phi(x_m)) >  0,
    \end{align*} 
    i.e. around $x_m$, $T(x)$ is concave, which implies $x_m$ cannot be a local minimum point. Hence $x_m \not\in (0,B).$ If $x_m=B$, due to $ -\partial_x^2 T(B)>0$ and $\partial_x T(B)=0$, $x_m=B$ should be a  maximum point rather than a minimum point. Therefore, $x_m \neq B$. Otherwise if $x_m =0$, $T(0)=\sigma T_b \ge 0$, which contradicts to the assumption $T(x_m)<0$, so $x_m \neq 0$. Therefore, $x_m \not\in [0,B]$ and thus the set of minimum points with negative value is empty.
    Therefore, we conclude that $T(x) \ge 0$ for all $x\in [0,B].$

We can follow a similar contradiction argument to show $T \le \gamma$. Suppose $T$ attains its maximum at $x=x_M$ with $T(x_M)>\gamma$. Due to monotonicity of $\phi$, $\phi(T(x_M)) > \phi(\gamma)$ and thus
   \[-\partial_x^2 T(x_M) = \sigma (g(x_M)-\phi(T(x_M))) < \sigma (\phi(\gamma)-\phi(\gamma)) < 0,\]
   which implies $x_M$ is a loal minimum. Hence $x_M \not\in (0,B)$. If $x_M=0$, then $T(x_M)=\sigma T_b \le \gamma$, so $x_M \neq 0$. If $x_M=B$, due to $\partial_x T(B)=0$, $\partial_x^2 T(B) \ge 0$ also implies that $B$ is a minimum point, which contradicts the assumption that $x_M=B$ is a maximum point. Therefore, we conclude that the set of maximum point with values bigger than $\gamma$ is empty. Therefore, $T(x_M) \le \gamma$ and so $T(x) \le \gamma$ for all$x\in [0,B]$. Finally, from  $0\le T \le \gamma$, we conclude that
\[
0 \le \|T\|_{C([0,B])} \le \gamma\quad \text{for any}\,\,\,\sigma \in [0,1].
\]
We can thus conclude that the set $\mathcal{A}$ is bounded for any $\sigma \in [0,1]$.
   \end{itemize}
  Consequently, the hypotheses of Leray-Schauder fixed point theorem are verified, which implies that $\mathcal{N}$ has a fixed point in $C([0,B])$. Therefore, there exists a solution in $C([0,B])$ for equation \eqref{eq:p2Tg}. Moreover, from \eqref{27} and the formula \eqref{ExF}, $T \in C^2([0,B])$.
    
        \emph{{Uniqueness}.} Suppose $T_1$ and $T_2$ are two distinct solutions to \eqref{eq:p2Tg}, then $T_1-T_2$ satisfies
    \begin{align*}
        \partial_x^2(T_1-T_2) - (\phi(T_1)-\phi(T_2)) = 0,
    \end{align*}
    with 
    \begin{align*}
        (T_1-T_2)(0)= 0,\, \partial_x (T_1-T_2)(B)=0.
    \end{align*}
We first show any maximum point $x_M \in [0,B]$ of $T_1(x)-T_2(x)$ cannot be positive. Otherwise, $T_1(x_M)-T_2(x_M) > 0$ and then by the monotonicity of $\phi$, $\phi(T_1(x_M))>\phi(T_2(x_M))>0$, hence $\partial_x^2(T_1-T_2)(x_M) = \phi(T_1(x_M))-\phi(T_2(x_M))>0$, which implies $T_1-T_2$ is convex near $x_M$. Thus $x_M$ can only be a local minimum or on the boundary. By the assumption, $x_M$ is a maximum point, $x_M \not\in (0,B)$. If $x_M=0$, $T_1(0)-T_2(0)= 0$ contradicts the assumption $(T_1-T_2)(x_M)>0$, so $x_M \neq 0$. If $x_M=B$, then $\partial_x (T_1-T_2)(B)=0$ and $\partial_x^2(T_1-T_2)(B)=0$ implies $B$ is a local minimum and thus cannot be a maximum point. Hence the set of $\{x:(T_1-T_2)(x)>0\}$ is empty.

    Similarly, we can show any minimum point $x_m \in [0,B]$ of $T_1(x)-T_2(x)$ cannot have values $T_1(x_m)-T_2(x_m) <0$.  Otherwise, $T_1(x_m)-T_2(x_m) <0$ and $\partial_x^2 (T_1-T_2)(x_m) = \phi(T_1(x_m))-\phi(T_2(x_m))<0$ and $T_1-T_2$ is locally concave at $x_m$. So $x_m$ cannot be a local minimum, $x_m \not\in (0,B)$. The fact that $(T_1-T_2)(0)=0$ implies $x_m \neq 0$. The condition $\partial_x (T_1-T_2)(B)=0$ and $\partial_x^2 (T_1-T_2)(B) <0$ implies that $B$ is a local maxium point and thus cannot be a minimum point, i.e. $x_m \neq B$. Therefore, we conclude that  the set of $\{x:(T_1-T_2)(x)<0\}$ is empty.

    Combining the above arguments, the set $\{x: (T_1-T_2)(x)\neq 0\}$ is empty. That is, $T_1=T_2$ on $[0,B]$ and the solution is unique.

    % Suppose $T_1-T_2$ attains its maximum at $x=B$. Then $T_1(B)>T_2(B)$ and  $\partial_x^2(T_1-T_2)(B) = \phi(T_1(B)) - \phi(T_2(B)) >0$ due to $\phi$ being increasing. This together with $\partial_x T_1(B)=\partial_x T_2(B)$ implies $x=B$ is a local minimum, which contradicts the assumption that $x=B$ is a maximum point. Similarly, we can show $T_1-T_2$ cannot attains its minimum at $x=B$. Assume $T_1-T_2$ attains its maximum at $x=x_M \in (0,B)$ then $(T_1-T_2)(x_M)>0$ and $\partial_x^2 (T_1-T_2)(x_M) = \phi(T_1(x_M)-T_2(x_M)) >0$ and $T_1-T_2$ is convex at $x_M$, which contradicts that $x_M$ is a maximum point. Hence $T_1-T_2$ cannot attains its maximum at $x=x_M$. Similarly, we can show $T_1-T_2$ cannot attains its minimum at the inner point of $(0,B)$. If $T_1-T_2$ attains its maximum at $x=0$ then $T_1-T_2<0$ on the interval $(0,B]$ and the minimum occur at the inner point or $x=B$, which cannot happen due to the above arguments. Similarly, $T_1-T_2$ cannot attains its minimum at $x=0$. Therefore, we conclude that .

    \emph{{Monotonicity}.} Assume $g_1(x)\le g_2(x)$ on $x\in [0,B]$ and $T_{b1} \le T_{b2}$, and $T_1$, $T_2$ are two solutions with source terms $g_1,\,g_2$ and boundary data $T_{b1},T_{b2}$, respectively. Suppose the maximum of $T_1(x)-T_2(x)$ occurs at $x=x_M \in (0,B)$ with $T_1(x)-T_2(x) > 0$. Then $\partial_x^2 (T_1-T_2)(x_M) = \phi(T_1(x_M)) - \phi(T_2(x_M)) > 0$ and so $x=x_M$ can not be a local maximum, $x_M \not\in (0,B)$.
    %which contradicts the assumption that $x=x_M$ is a maximum point.
     If the maximum occurs at $x=B$ and $(T_1-T_2)(B)>0$, then $\partial_x^2 (T_1-T_2)(B) > 0$ and so $x=B$ is a minimum point due to $\partial_x T_1(B)=\partial_x T_2(B)$, which contradicts the assumption that $x=B$ is a maximum point. If the maximum occurs at $0$, we have $T_1(0)-T_2(0)=T_{b1}-T_{b2}\le 0$, which contradicts the assumption $T_1(x_M)-T_2(x_M)>0$.
     Therefore, we have $T_1 \le T_2$ on $[0,B]$.
\end{proof}
%%%%%%%%%%%%%%%%%%%%%%%%%%%%%%%%%%%%%%%%%%%%%%%%%%%
%%%%%%%%%%%%%%%%%%%%%%%%%%%%%%%%%%%%%%%%%%%%%%%%%%%

\subsection{Existence and uniqueness}\label{SubExUni}
With the monotonicity of solutions to the second order ODE in Lemma \ref{lm:T} and the monotonicity of solutions for the linear transport equation in Lemma \ref{lm:psi} in Appendix A, we are prepared to prove the existence for system \eqref{eq:B1}-\eqref{eq:B2}. Before we give the proof, we first introduce the concept of subsolutions.

% We now state  the monotonicity result of solutions for the transport equation \eqref{eq:B2}, for more details we refer the reader to \cite[chapter \RomanNumeralCaps{21}]{dautray1999mathematical}.

% For the second equation \eqref{eq:m2}, we introduce the following lemma inspired from the work of Clou\"et and Sentis \cite{clouet2009milne}.
%%%%%%%%%%%%%%%%%%%%%%%%%%%%%%%%%%%%%%%%%%%%%%%%%%%
%%%%%%%%%%%%%%%%%%%%%%%%%%%%%%%%%%%%%%%%%%%%%%%%%%%

% The proof of Lemma \ref{lm:psi} is given in Appendix \ref{Prooflemma2Psi}.

% We next introduce the definition of the subsolution for system  \eqref{eq:B1}-\eqref{eq:B2}.
\begin{definition} \label{def:sub}
We call $(T,\psi) \in C^2([0,B]) \times C^1([0,B]\times [-1,1])$ %\in L^\infty(\mathbb{R}_+)\cap S_\alpha'(\mathbb{R}_+)\times L^\infty(\mathbb{R}_+\times [-1,1])\cap S'_\alpha(\mathbb{R}_+\times [-1,1])$
 a subsolution of \eqref{eq:B1}-\eqref{eq:B2} if the following inequalities hold 
\begin{align}
  -  \partial_{x}^2 T - \langle \psi - T^4 \rangle  \le~& 0,\label{eq:m1sub}\\
    \mu \partial_x \psi + (\psi-T^4) \le ~&0,\label{eq:m2sub}
\end{align}
and on the boundary they satisfy $0 \le T(0) \le T_b$, $0\le \psi(0,\mu) \le \psi_b(\mu)$ for $\mu \in (0,1]$.
\end{definition}

% \subsection{Proof of Theorem \ref{thm.bd}}\label{proofTh4}
% We are now ready to prove Theorem \ref{thm.bd}.% the existence of system \eqref{eq:B1}-\eqref{eq:B2}.
% We next state the following lemma.
% \begin{lemma}\label{lm:ex}
%    Given boundary data $0\le T_b \le \gamma, 0 \le \psi_b =\psi_b(\mu) \le \gamma^4$ for any $\mu \in (0,1]$. If the problem \eqref{eq:m1}-\eqref{eq:m2} with boundary conditions \eqref{eq:bd1}-\eqref{eq:bd2} has a subsolution $(T^0(x),\psi^0(x,\mu))$\footnote{
%     We may take $T^0=0$ and $\psi^0=0$, which is  a subsolution. 
% }, i.e.
%     \begin{align}
%         &-\partial_x^2T^0 +2(T^0)^4 \le  \langle \psi^0\rangle,\label{eq:01}\\
%         &\mu \partial_x \psi^0 + \psi^0 \le (T^0)^4.\label{eq:02}
%     \end{align}
%     with 
%     \begin{align*}
%         &0\le T^0(0)\le T_b,\quad \partial_x T^0(B)=0\\
%         &0\le \psi^0(0,\mu) \le \psi_b(\mu),\quad \psi(B,\mu)=\psi(B,-\mu),\quad \text{for any }\mu \in(0,1].
%     \end{align*}
%     Then there exists a unique solution $(T,\psi) \in C^2([0,B])\times C^1([0,B]\times [-1,1])$ to system \eqref{eq:B1}-\eqref{eq:B2} with  boundary conditions \eqref{eq:B1b}-\eqref{eq:B2b} and the solution satisfies 
%     \begin{align*}
%         0\le T(x) \le \gamma,\quad 0\le \psi(x,\mu) \le \gamma^4, \text{ for any }x\in [0,B], \mu\in [-1,1].
%     \end{align*}
% \end{lemma}
% \begin{rem}
%     The above lemma when $\partial_x^2 T$ is not considered was proved in \cite{clouet2009milne}. Here we follow a similar procedure to prove the above lemma.
% \end{rem}
\begin{proof}[Proof of Theorem \ref{thm.bd}, existence and uniqueness]
    The proof is divided into four steps. An approximate sequence is constructed in the first step. Then, we prove the existence of solutions for system \eqref{eq:B1}-\eqref{eq:B2}. After that, the uniqueness of solutions is shown using a contradiction argument. Finally, a weighted energy estimate is derived.
    
    \smallskip

    \emph{{Step 1:  Construction of an approximate sequence}.} 
    Let $\phi(T)$ be a strictly increasing function defined by 
    \begin{align*}
        \phi(T) = \left\{\begin{array}{cl}
            \frac{T}{1-T}&\text{if } T<0,\\
            T^4,&\text{if } 0\le T \le \gamma,\\
            \gamma^4 + \frac{T-\gamma}{1+T-\gamma},&\text{if } T>\gamma.
        \end{array}\right.
    \end{align*}
    The above function satisfies  $\phi(T)<0$ if $T<0$ and $\phi(T)> \gamma^4$ if $T> \gamma$.

    We construct a sequence of functions $\{T^k=T^k(x),\,\psi^k=\psi^k(x,\mu)\}_{k=0}^\infty$ starting with $T^0$, $\psi^0$ being a subsolution given by Definition \ref{def:sub}\footnote{
    We may take $T^0=0$ and $\psi^0=0$, which is  a subsolution. 
}, i.e.
    \begin{align}
        &-\partial_x^2T^0 +2 \phi (T^0) \le  \langle \psi^0\rangle,\label{eq:01}\\
        &\mu \partial_x \psi^0 + \psi^0 \le (T^0)^4.\label{eq:02}
    \end{align}
    with 
    \begin{align*}
        &0\le T^0(0)\le T_b,\quad \partial_x T^0(B)=0,\\
        &0\le \psi^0(0,\mu) \le \psi_b(\mu),\quad \psi(B,\mu)=\psi(B,-\mu),\quad \text{for any }\mu \in(0,1].
    \end{align*}
    Given $(T^k,\psi^k)$, $(T^{k+1},\psi^{k+1})$ is obtained by solving 
   \begin{align}
        &-\partial_x^2 T^{k+1} + 2\phi(T^{k+1}) = \langle \psi^{k}\rangle, \label{eq:0k+1}\\
        &\mu \partial_x \psi^{k+1} + \psi^{k+1} = (T^k)^4, \label{eq:1k+1}
    \end{align}
    with boundary conditions
    \begin{align*}
        &T^{k+1}(0) = T_b,\quad \partial_x T^{k+1}(B)=0,\\
        &\psi^{k+1}(0,\mu) = \psi_b, \quad \psi^{k+1}(B,\mu) = \psi^{k+1}(B,-\mu),\quad \text{for any }\mu>0.
    \end{align*}

    \smallskip

    \emph{{Step 2:  Existence.}} % for system \eqref{eq:B1}-\eqref{eq:B2}}.} 
    Assume $(T^k,\psi^k) \in  C^2([0,B])\times C^1([0,B]\times [-1,1])$ satisfy $0\le T^k \le \gamma$ and $0 \le \psi^k \le \gamma^4$, then  Lemma \ref{lm:T} and Lemma \ref{lm:psi} imply that there exists a unique solution $(T^{k+1},\psi^{k+1}) \in C^2([0,B])\times C^1([0,B]\times [-1,1])$ to system \eqref{eq:0k+1}-\eqref{eq:1k+1} and the solution satisfies $0 \le T^{k+1} \le \gamma$ and $0\le \psi^{k+1} \le \gamma^4$. By induction, we conclude that for any $k\ge 1$ and $k \in \mathbb{N}$,
    \begin{align}\label{eq:216}
        0\le T^k(x) \le \gamma,\quad 0\le \psi^k(x,\mu) \le \gamma^4, \text{ for any }x\in [0,B],\, \mu \in [-1,1].
    \end{align}

    Next we prove by induction that $T^{k+1}(x) \ge T^k(x),\psi^{k+1}(x,\mu)\ge \psi^k(x,\mu)$ for any $x\in [0,B]$ and $\mu \in [-1,1]$.
    First $T^1,\psi^1$ solves 
    \begin{align*}
        &-\partial_x^2 T^1 + 2 \phi(T^1) = \langle \psi^0 \rangle,\\
        &\mu \partial_x \psi^1 + \psi^1 = (T^0)^4.
    \end{align*}
    Comparing this with  equations \eqref{eq:01}-\eqref{eq:02} for $(T^0,\psi^0)$, we get from Lemma \ref{lm:T} and Lemma \ref{lm:psi} that $T^1(x) \ge T^0(x)$ and $\psi^1(x,\mu) \ge \psi^0(x,\mu)$ for $x\in [0,B]$ and $\mu \in [-1,1]$.
    % and $T^{k+1}(x) \le \gamma,\psi^{k+1}(x,\mu)\le \gamma^4$ for any $x\in [0,B],\mu \in [-1,1]$. 
    % Here and below $T^{k+1}\ge T^k$ means $T^{k+1}(x)\ge T^k(x)$ for all $x\in \mathbb{R}_+$ and $\psi^{k+1}\ge \psi^k$ means $\psi^{k+1}(x,\mu) \ge \psi^k(x,\mu)$ for all $x\in\mathbb{R}_+$.
    
    Suppose $T^k(x) \ge T^{k-1}(x)$ and $\psi^k(x,\mu)\ge \psi^{k-1}(x,\mu)$, then 
    $$ - \partial_x^2 T^{k+1} + 2\phi(T^{k+1}) = \langle \psi^{k}\rangle,$$ 
    $$ - \partial_x^2 T^{k} + 2\phi(T^{k}) = \langle \psi^{k-1}\rangle,$$
    and by Lemma \ref{lm:T}, $T^{k+1}(x) \ge T^k(x)$ for any $x\in [0,B]$. 
    % since $\phi(x)$ is strictly increasing and $\langle \psi^{k-1}\rangle,\langle \psi^k\rangle\le 2\gamma^4 $, and from Lemma \ref{lm:T}, we conclude that 
    % \begin{align*}
    %     T^{k+1}(x) \le \gamma,\quad T^{k+1}(x) \le T^k(x),\quad \text{for any }x\in [0,B].
    % \end{align*}
    Similarly, we have 
    \begin{align*}
        \mu \partial_x \psi^{k+1} + \psi^{k+1} = (T^k)^4,\\
        \mu \partial_x \psi^{k} + \psi^k = (T^{k-1})^4,
    \end{align*}
    and Lemma \ref{lm:psi} implies that $\psi^{k+1}(x,\mu) \ge \psi^k(x,\mu)$ for any $x\in [0,B]$ and $\mu \in [-1,1]$. Therefore, we conclude that for any $k\ge 1$ and $k \in \mathbb{N}$,
    \begin{align}
        T^{k+1}(x) \ge T^k(x),\quad \psi^{k+1} \ge \psi^k(x), \quad \text{for any } x\in [0,B],\, \mu \in [-1,1].
    \end{align}
    % \begin{align*}
    %     \psi^{k+1}(x,\mu) \le \gamma^4,\quad \psi^{k+1} (x,\mu)\le \psi^k(x,\mu),\quad \text{for any }x\in [0,B],\mu\in [-1,1].
    % \end{align*}
Combining this with \eqref{eq:216}, we conclude that
     $\{T^k\}_{k=0}^\infty,\{\psi^k\}_{k=0}^\infty$ are bounded increasing sequences. Hence there exists a converging subsequence such that 
    \begin{align*}
        &\lim_{k\to\infty} T^k (x)= T^B(x), \quad\lim_{k\to \infty} \psi^k(x,\mu) = \psi^B(x,\mu), %\text{ in }C([0,B]\times [-1,1])
        % \text{ for any }x\in\mathbb{R}_+, \mu\in [-1,1],
    \end{align*}
    for any $x\in [0,B], \mu \in [-1,1]$. Moreover, by the Beppo Levi's lemma, it holds that 
    \begin{align*}
   \lim_{k\to\infty} \int_{0}^x T_k(y) dy =\int_{0}^x T^B(y) dy, \quad \lim_{k\to\infty} \int_{0}^x \psi_k(y,\mu) dy =\int_{0}^x \psi^B(y,\mu) dy.
    \end{align*}    
    Hence, we can pass to the limit $k\to\infty$ in
    \begin{align}
    	T^{k+1} = \mathcal{G}\left(\langle \psi^k\rangle - \phi(T^{k+1})\right) + T_b ,
    \end{align}
    and get 
    \begin{align}
    T^B =   \lim_{k\to\infty}\mathcal{G}\left(\langle \psi^k\rangle  - \phi(T^{k})\right) + T_b = \mathcal{G}\left(\langle \psi\rangle - \phi(T)\right) + T_b
    \end{align}
    and by the continuity of the operator $\mathcal{G}$, $T^B \in C^2([0,B]$. In a similar way we can also get $\psi\in C^1([0,B]\times [-1,1])$. Hence, we can apply the Dini's theorem and conclude that the convergence of $(T_k,\psi_k)$ to $(T^B,\psi^B)$ is uniform in $C^2([0,B])\times C^1([0,B]\times [-1,1])$.
    
Moreover, since $0\le T_k,\psi_k \le \gamma$, $(T^B,\psi^B)$ satisfies $0\le T\le \gamma$, $0\le \psi\le \gamma^4$. We can pass to the limit in \eqref{eq:0k+1}-\eqref{eq:1k+1} and get that $(T^B,\psi^B)$ satisfies the system
    \begin{align*}
        &-\partial_x^2 T^B + 2\phi(T^B) = \langle \psi^B \rangle,\\
        &\mu\partial_x \psi^B+\psi^B = (T^B)^4,
    \end{align*}
    with 
    \begin{align*}
        &T^B(0)=T_b,\quad \partial_x T^B(B)=0,\\
        &\psi^B(0,\mu) = \psi_b,\quad \psi^B(B,\mu)=\psi(B,-\mu),\quad \text{for any } \mu>0.
    \end{align*}
    Since $0\le T^B\le \gamma$, $\phi^B(T)=T^{4}$ and the above system is equivalent to \eqref{eq:B1}-\eqref{eq:B2}. Therefore, there exists a solution $(T^B,\psi^B) \in C^2([0,B])\times C^1([0,B]\times [-1,1])$ to system \eqref{eq:B1}-\eqref{eq:B2} with boundary conditions \eqref{eq:B1b}-\eqref{eq:B2b}.

    \smallskip

    \emph{{Step 3: Uniqueness.}}
    % for system \eqref{eq:B1}-\eqref{eq:B2}}.}
% Finally we show uniqueness of solutions.
% \end{proof}
% {\color{blue}{Inspired by the work of Clou\"et and Sentis \cite{clouet2009milne}, we now introduce the uniqueness result.}}
% \begin{lemma}\label{lm:unique}
% Assuming the assumptions of Lemma \ref{lm:ex} hold. Then the system \eqref{eq:m1}-\eqref{eq:m2} has a unique solution.
% \end{lemma}
% \begin{proof}
Suppose $(T_1^B,\psi_1^B),(T_2^B,\psi_2^B)$ are two solutions to \eqref{eq:B1}-\eqref{eq:B2} with boundary conditions \eqref{eq:B1b}-\eqref{eq:B2b}. Define 
    \begin{align*}
        T^0 = \max(T_1^B,T_2^B),\quad \psi^0=\max(\psi_1^B,\psi_2^B),
    \end{align*}
    which satisfies \eqref{eq:01}-\eqref{eq:02} and is thus a subsolution.
    We construct a sequence of functions $\{T^k(x),\psi^k(x,\mu)\}_{k=0}^\infty$ by solving iteratively
    \begin{align*}
        &-\partial_x^2 T^{k+1} + 2\phi(T^{k+1}) = \langle \psi^{k}\rangle,\\
        &\mu \partial_x \psi^{k+1} + \psi^{k+1} = (T^k)^4.
    \end{align*}
    Then Lemma \ref{lm:psi} implies that there exists a subsequence such that  $(T^{k},\psi^{k})$ converges to a solution $(T^B,\psi^B)$ of system \eqref{eq:m1}-\eqref{eq:m2} as $k\to \infty$ and $T^B\ge T^0,\psi^B \ge \psi^0$. Hence $g^B= T^B-T_1^B,\varphi^B = \psi^B-\psi_1^B$ is a nonnegative solution to the following system 
    \begin{align}
        &- \partial_x^2 g^B + 2((T^B)^4 - (T_1^B)^4) = \langle \varphi\rangle,\label{eq:g/1}\\
        &\mu \partial_x \varphi^B + \varphi^B = ((T^B)^4 - (T_1^B)^4),\label{eq:g/2}
    \end{align}
    with 
    \begin{align*}
        &g^B(0)=0,\quad \partial_x g^B(B)=0,\\
        &\varphi^B(0,\mu) = 0,\quad \varphi^B(B,\mu)=\varphi^B(B,-\mu),\text{ for any }\mu>0.
    \end{align*}
    Comparing \eqref{eq:g/1} to \eqref{eq:g/2} gives
    \begin{align*}
        \partial_x( \partial_x g^B - \langle \mu \varphi^B\rangle) =0.
    \end{align*}
    Due to the boundary conditions $\partial_x g^B(B)=0$ and $\langle \mu \varphi^B(B,\cdot)\rangle = 0$,
    \begin{align*}
        \partial_x g^B (x) = \langle \mu \varphi^B(x,\cdot) \rangle, \text{ for any }x\in [0,B].
    \end{align*}
    Using \eqref{eq:sol/mu-}, we have
    \begin{align*}
        \varphi^B(0,\mu) =& -\int_0^B \frac{1}{\mu} e^{\frac{2B-s}{\mu}} ((T^B)^4-(T_1^B)^4)(s) ds \nonumber\\
        &- \int_x^B \frac{1}{\mu} e^{-\frac{x-s}{\mu}} ((T^B)^4-(T_1^B)^4)(s) ds, \text{ for }\mu<0.
        % \int_0^\infty (T^4-T_1^4)(t|\mu|)e^{-t}dt,\text{ for }\mu<0.
    \end{align*}
    If $T^B(x)> T_1^B(x)$ on some interval $(a,b)\subset [0,B]$, then the above equation implies that $\varphi^b(0,\mu)> 0$ for $\mu <0$. This, together with the boundary condition $\varphi^B(0,\mu)=0$ for $\mu>0$, implies 
    \begin{align}
        \partial_x g^B(0) = \langle \mu \varphi^B(0,\cdot)\rangle = \int_{-1}^0 \mu \varphi^B(0,\mu) d\mu < 0.
    \end{align}
    This contradicts to the fact that $g^B(x)\ge 0$ for any $x\in [0,B]$ and $g^B(0)=0$. Hence $T^B(x)=T_1^B(x)$ for $x\in [0,B]$ almost everywhere.
    
    Due to $T^B=T_1^B$,  equation \eqref{eq:g/2} becomes $\mu\partial_x \varphi^B +\varphi^B=0$, with boundary conditions $\varphi^B(0,\mu)=0, \varphi(B,\mu)=\varphi(B,-\mu)$ for $\mu>0$. Therefore, $\psi^B(x,\mu)=\psi_1^B(x,\mu)$ for $x\in [0,B], \mu \in [-1,1]$.
    By the same argument we also obtain $T^B=T_2^B, \psi^B=\psi_2^B$. Hence $T_1^B=T_2^B$ and $\psi_1^B=\psi_2^B$,  i.e. the solution to system \eqref{eq:B1}-\eqref{eq:B2b} is unique.

\end{proof}

    \subsection{Weighted estimate} \label{sec:weB}
    Next we derive the weighted estimate \eqref{eq:estB2}.
    % \emph{\textbf{Step 4: Weighted estimate.}}
    % for system \eqref{eq:B1}-\eqref{eq:B2}}.}
%%%%%%%%%%%%%%%%%%%%%%%%%%%%%%%%%%%%%%%%%%%%%%%%
%%%%%%%%%%%%%%%%%%%%%%%%%%%%%%%%%%%%%%%%%%%%%%%%
% \subsection{Uniform estimate with respect to $B$ for system \eqref{eq:B1}-\eqref{eq:B2}}
% Next we derive uniform estimate for system \eqref{eq:B1}-\eqref{eq:B2}. We prove the following Lemma.
% \begin{lemma}\label{lm.bd}
%     Let $T^B,\psi^B$ be the solution to system \eqref{eq:B1}-\eqref{eq:B2} with boundary data $\psi_b$, $T_b$ satisfying $0 \le T_b \le \gamma$, $0 \le \psi_b \le \gamma^4$. Then the following estimate holds for any $x\in [0,B]$:
    %  Moreover, we have the following exponential estimate 
    %     % \begin{align}\label{eq:estB2}
    %     %     &\int_0^x e^{2\alpha z} 4 (T^B)^3 |\partial_x T^B|^2 dx + \int_0^x\int_{-1}^1 (1-\alpha \mu)e^{2\alpha z} (\psi^B - (T^B)^4)^2 d\mu dx \nonumber\\
    %     %     &\quad + \frac12e^{2\alpha z} \int_{-1}^1 \mu (\psi^B(x,\cdot)-(T^B(x))^4)^2 d\mu  + \frac12 \int_{-1}^0 |\mu|  (\psi^B(0,\cdot) - T_b^4)^2 d\mu \nonumber \\
    %     %     &\qquad = \frac12 \int_0^1 \mu (\psi_b-T_b^4)^2 d\mu,
    %     % \end{align}
    %     for any $x \in [0,B]$ and $0<\alpha<1$.
    % \end{lemma}
% \begin{proof}
%     \emph{Derivation of the estimate \eqref{eq:estB1}.}
\begin{proof}[Proof of Theorem \ref{thm.bd}, weighted estimate]
    We multiply \eqref{eq:B1} by $(T^B)^4$, integrate over $[0,x]$ and multiply \eqref{eq:B2} by $\psi^B$, integrate over $(z,\mu)\in [0,x]\times[-1,1]$, and combine the results to get 
    \begin{align}\label{lm4.0}
        -\int_0^x (T^B)^4 \partial_z^2 T^B dz + \frac12 \int_0^x \int_{-1}^1 \mu \partial_z (\psi^B)^2 d\mu dz + \int_0^x\int_{-1}^1 (\psi^B - (T^B)^4)^2 d\mu dz  = 0.
    \end{align}
    Using integration-by-parts,
    %  and the boundary conditions \eqref{eq:B1b}-\eqref{eq:B2b},
      we obtain 
    \begin{align}\label{lm4.1}
        -\int_0^x(T^B)^4 \partial_z^2 T^B dz &= \int_0^x 4 (T^B)^3 |\partial_z T^B|^2 dz - (T^B)^4(z) \partial_z T^B(z) \bigg|_{z=0}^x .%\\
        % &=  \int_0^x 4 (T^B)^3 |\partial_x T^B|^2 dx + T_b^4 \partial_x T^B(0) \nonumber
    \end{align}
    and 
    \begin{align}\label{lm4.2}
        \int_0^x \int_{-1}^1 \mu \partial_z (\psi^B)^2 d\mu dz = \int_{-1}^1 \mu (\psi^B)^2(z,\cdot) d\mu \bigg|_{z=0}^x. %= - \int_{-1}^1 \mu (\psi^B)^2(0,\mu) d\mu .
    \end{align}
    Comparing  equation \eqref{eq:B1} to \eqref{eq:B2} gives 
    \begin{align}\label{lm4.3}
        \partial_x^2 T^B = \partial_x \langle \mu \psi^B \rangle,
    \end{align}
    this together with $\partial_x T^B (B)= \langle \mu \psi^B (B,\cdot) \rangle = 0$ implies 
    \begin{align}\label{lm4.=}
        \partial_x T^B(x) = \langle \mu \psi^B (x,\cdot)\rangle,\quad \text{for any } x\in [0,B].
    \end{align}
    % Hence $\partial_x T^B(0)= \langle \mu \psi^B(0,\cdot)\rangle$. 
    Using this and taking \eqref{lm4.1} and \eqref{lm4.2} into \eqref{lm4.0}, we get 
    \begin{align*}
        &\int_0^x 4 (T^B)^3 |\partial_z T^B|^2 dz + \int_0^x \int_{-1}^1 (\psi^B - (T^B)^4)^2 d\mu dz \\
        &\quad  + \frac12 \int_{-1}^1 \mu (\psi^B(x,\cdot) - (T^B(x))^4)^2 d\mu - \frac12 \int_{-1}^1 \mu (\psi^B(0,\cdot) - T_b^4)^2 d\mu = 0.
    \end{align*}
    The boundary condition \eqref{eq:B2b} implies 
    \begin{align}\label{lm4.eqb}
        \int_{-1}^1 \mu (\psi^B(0,\cdot) - T_b^4)^2 d\mu = \int_{-1}^0 \mu (\psi^B(0,\cdot) - T_b^4)^2 d\mu + \int_0^1 \mu (\psi_b-T_b^4)^2 d\mu.
    \end{align}
    Taking it into the previous equation leads to the estimate \eqref{eq:estB2} with $\alpha=0$:
    \begin{align}\label{eq:estB1}
        &\int_0^x 4 (T^B)^3 |\partial_z T^B|^2 dx + \int_0^x \int_{-1}^1 (\psi^B - (T^B)^4)^2 d\mu dz  \nonumber\\
        &\quad + \frac12 \int_{-1}^1 \mu (\psi^B(x,\cdot) - (T^B(x))^4)^2 d\mu  + \frac12 \int_{-1}^0 |\mu|  (\psi^B(0,\cdot) - T_b^4)^2 d\mu  \nonumber\\
        &\qquad =\frac12 \int_0^1 \mu (\psi_b-T_b^4)^2 d\mu.
    \end{align}
    Note here we have 
    \begin{align*}
        \int_{-1}^1 \mu (\psi^B(x,\cdot) - (T^B(x))^4)^2 d\mu \ge 0
    \end{align*}
    for any $x \in [0,B]$, which is due to the fact that $\int_{-1}^1 \mu (\psi^B(x,\cdot) - (T^B(x))^4)^2 d\mu$ is a non-increasing function of $x$ and 
    \begin{align}
        \int_{-1}^1 \mu (\psi^B(B,\cdot) - (T^B(B))^4)^2 d\mu  =0,
    \end{align}
    which is due to the boundary condition $\psi^B(B,\mu)=\psi^B(B,-\mu)$. The non-increasing fact of $\int_{-1}^1 \mu (\psi^B(x,\cdot) - (T^B(x))^4)^2 d\mu$ can be seen if we integrate over $[x_1,x_2]$ with $0\le x_1 < x_2 \le B$ in the derivation above, the inequality
    \begin{align*}
        &\frac12 \int_{-1}^1 \mu (\psi^B(x,\cdot) - (T^B(x))^4)^2 d\mu \bigg|_{x_1}^{x_2} \\
        &\quad = -\int_{x_1}^{x_2} 4 (T^B)^3 |\partial_x T^B|^2 dx - \int_{x_1}^{x_2} \int_{-1}^1 (\psi^B - (T^B)^4)^2 d\mu dx \le 0
    \end{align*}
    is satisfied.

    % \begin{align}
    %     &\int_0^B 4 (T^B)^3 |\partial_x T^B|^2 dx + \int_0^B \int_{-1}^1 (\psi^B - (T^B)^4)^2 d\mu dx + \int_{-1}^0 |\mu|  (\psi^B(0,\cdot) - T_b^4)^2 d\mu \nonumber \\
    %     &\quad \le \int_0^1 \mu (\psi_b-T_b^4)^2 d\mu.
    % \end{align}

    % \emph{The weighted estimate \eqref{eq:estB2}.}
% We next  derive the weighted estimate. 
% We assume that the positive constant $\alpha \in (0,1)$. 
Next, let $\alpha \in (0,1)$ be a constant.
We multiply \eqref{eq:B1} by $e^{2\alpha x} (T^B)^4$, and \eqref{eq:B2} by $e^{2\alpha x} \psi^B$, and integrate to get 
    \begin{align}\label{lm4.4}
        &- \int_0^x e^{2\alpha z} (T^B)^4 \partial_z^2 T^B dx + \frac12 \int_0^x \int_{-1}^1 \mu e^{2\alpha z} \partial_z (\psi^B)^2 d\mu dx \nonumber\\
        &\quad + \int_0^x\int_{-1}^1 e^{2\alpha z} (\psi^B - (T^B)^4)^2 d\mu dx = 0. 
    \end{align}
    Using integration-by-parts, we get 
    \begin{align*}
        - \int_0^x e^{2\alpha z} (T^B)^4 \partial_z^2 T^B dz & = \int_0^x e^{2\alpha z} 4 (T^B)^3 |\partial_x T^B|^2 dx + 2\alpha \int_0^x e^{2\alpha z} (T^B)^4 \partial_z T^B dx \nonumber \\
        &\quad - e^{2\alpha z} (T^B)^4(z) \partial_z T^B (z)\bigg|_{z=0}^x, \nonumber%\\
        % & = \int_0^B e^{2\alpha z} 4 (T^B)^3 |\partial_x T^B|^2 dx + 2\alpha \int_0^B e^{2\alpha z} (T^B)^4 \partial_x T^B dx \nonumber \\ 
        % &\quad + T_b^4 \partial_x T^B(0)
    \end{align*}
    and 
    \begin{align*}
        \int_0^x \int_{-1}^1 \mu e^{2\alpha z} \partial_z (\psi^B)^2 d\mu dz &= - 2\alpha \int_0^x \int_{-1}^2 \mu e^{2\alpha z} (\psi^B)^2 d\mu dz + \int_{-1}^1 \mu e^{2\alpha z} (\psi^B)^2(z,\cdot) d\mu \bigg|_0^x. \nonumber%\\
        % &=  - 2\alpha \int_0^B \int_{-1}^1 \mu e^{2\alpha z} (\psi^B)^2 d\mu dx - \int_{-1}^1 \mu (\psi^B)^2(0,\cdot) d\mu.
    \end{align*}
    Taking the above equations into \eqref{lm4.4} and using the relation \eqref{lm4.=} gives
    \begin{align*}
        &\int_0^x e^{2\alpha z} 4 (T^B)^3 |\partial_z T^B|^2 dx + \int_0^x\int_{-1}^1 e^{2\alpha z} (\psi^B - (T^B)^4)^2 d\mu dz \nonumber\\
        &\quad - \alpha \int_0^x \int_{-1}^1 \mu e^{2\alpha z} (\psi^B - (T^B)^4)^2 d\mu dz = -\frac12 \int_{-1}^1 \mu e^{2\alpha z} (\psi^B(z,\cdot)-(T^B(z))^4)^2 d\mu \bigg|_0^x .%\frac12 \int_{-1}^1 \mu (\psi^B(0,\cdot) - (T^b)^4)^2 d\mu.
    \end{align*}
    Using the relation \eqref{lm4.eqb} and $1-\alpha \mu \ge  1-\alpha$ for $0\le \alpha<1$ in the above equation leads to
    \begin{align}
        &\int_0^x e^{2\alpha z} 4 (T^B)^3 |\partial_z T^B|^2 dx + (1-\alpha)\int_0^x\int_{-1}^1 e^{2\alpha z} (\psi^B - (T^B)^4)^2 d\mu dz \nonumber\\
        &\quad + \frac12 \mu\int_{-1}^1 e^{2\alpha x} (\psi^B(x,\cdot)-(T^B(x))^4)^2 d\mu + \frac12 \int_{-1}^0 |\mu| (\psi^B(0,\cdot)-T_{b})^4)^2 d\mu \nonumber\\
        &\qquad \le \frac12 \int_0^1 \mu (\psi_b - T_b^4)^2 d\mu,
    \end{align}

    and finishes the proof of Theorem \ref{thm.bd}.
    % \begin{align}
    %     &\int_0^B e^{2\alpha z} 4 (T^B)^3 |\partial_x T^B|^2 dx + (1-\alpha)\int_0^B\int_{-1}^1 e^{2\alpha z} (\psi^B - (T^B)^4)^2 d\mu dx \nonumber\\
    %     &\quad + \frac12 \int_{-1}^0 |\mu|  (\psi^B(0,\cdot) - T_b^4)^2 d\mu \le \frac12 \int_0^1 \mu (\psi_b-T_b^4)^2 d\mu.
    %     % - \alpha \int_0^B \int_{-1}^1 \mu e^{2\alpha z} (\psi^B - (T^B)^4)^2 d\mu dx + 
    % \end{align}
% \end{proof}
\end{proof}

% \section{Proof of Theorem \ref{thm:ex}}\label{proofTh1}
\section{Existence and decay results for the nonlinear Milne problem}\label{proofTh1}

In this section, we consider the existence of the nonlinear Milne problem and prove Theorem \ref{thm:ex}.
% Throughout this section, we give the proof of Theorem \ref{thm:ex}. 
We first pass to the limit $B\to\infty$ in system \eqref{eq:B1}-\eqref{eq:B2} and show the existence of weak solutions to system \eqref{eq:m1}-\eqref{eq:m2} as well as the weighted estimate \eqref{eq:es-2}. Then the exponentially decay of solutions is shown.
%%%%%%%%%%%%%%%%%%%%%%%%%%%%%%%%%%%%%%%%%%%%%%%%
\subsection{Existence}\label{ProofEx1}%Passage to the limit $B\to \infty$}\label{sec:B-inf}
We first pass to the limit $B\to\infty$ on system \eqref{eq:B1}-\eqref{eq:B2} and show the existence of weak solutions for system \eqref{eq:m1}-\eqref{eq:m2}.% and the weighted estimate \eqref{eq:es-2}. 
% \begin{lemma}\label{lm5}
%     There exists a subsequence $(T^{B_k},\psi^{B_k})$ such that 
%     \begin{align}
%         &T^{B_k} \to T,\quad \text{strongly in } C_{\rm loc}(\mathbb{R}_+), \\
%         &\psi^{B_k} - (T^{B_k})^4 \rightharpoonup \psi - T^4,\quad \text{weakly in } L^2_{\rm loc}(\mathbb{R}_+\times [-1,1]),
%     \end{align}
%     and $(T,\psi)$ solves system \eqref{eq:m1}-\eqref{eq:m2} weakly and satisfies the boundary conditions \eqref{eq:bd1}-\eqref{eq:bd2}.

% \end{lemma}

\begin{proof}[Proof of Theorem \ref{thm:ex}, existence]
First we show the convergence of $T^B(x)$ as $B\to\infty$. To show this, we first prove that there exists a subsequence $\{T^{B_k}(B_k)\}$ converging to some constant $T_\infty$.
From the weighted estimate \eqref{eq:estB2} (taking $x=B$ in \eqref{eq:estB2}), we can get 
\begin{align*}
    &(T^B)^{5/2}(B) - T_b^{5/2} = \int_0^B \partial_x (T^B)^{5/2} dx = \int_0^B e^{-\alpha x} e^{\alpha x} \partial_x (T^B)^{5/2} dx \nonumber\\
    &\quad \le \left(\int_0^B e^{2\alpha x} |\partial_x (T^B)^{5/2}|^2 dx\right)^{1/2}  \left(\int_0^B e^{-2\alpha x} dx\right)^{1/2} \nonumber\\
    &\quad \le \frac{1}{(2\alpha)^{1/2}} \sqrt{(1-e^{-2\alpha B})} \left(\frac{25}{16}\int_0^B e^{2\alpha x} 4(T^B)^3|\partial_x (T^B)|^2 dx\right)^{1/2} \nonumber\\
    &\quad \le \frac{5}{4(2\alpha)^{1/2}} \sqrt{(1-e^{-2\alpha B})} \left(\frac12 \int_0^1 \mu (\psi_b-T_b^4)^2 d\mu\right)^{1/2} \nonumber\\
    &\quad \le \frac{5}{4(2\alpha)^{1/2}} \left(\frac12 \int_0^1 \mu (\psi_b-T_b^4)^2 d\mu\right)^{1/2}.
\end{align*}
Hence $(T^B)^{5/2}(B)$ is bounded for any $B>0$. By the Bolzano–Weierstrass theorem, there exists a subsequence $\{B_k\}$ such that 
\begin{align*}
    T^{B_k}(B_k) \to T_\infty, \quad \text{as } k\to \infty,
\end{align*}
where $T_\infty$ is some constant. Since $0\le T^k(B_k)(B_k) \le \gamma$ according to Theorem \ref{thm.bd}, $0\le T_\infty \le \gamma$. 

Next we show $T^B(x)-T^B(B)$ is uniformly bounded in $ L^2([0,B])$.  
From estimate \eqref{eq:estB2}, it holds the inequality
\begin{align*}
    (1-\alpha)\int_0^B\int_{-1}^1 e^{2\alpha x} (\psi^B - (T^B)^4)^2 d\mu dx \le  \frac12 \int_0^1 \mu (\psi_b-T_b^4)^2 d\mu.
\end{align*}
Using this estimate and the relation 
\begin{align*}
    \left(\int_{-1}^1 \mu \psi^B d\mu\right)^2 &=  \left(\int_{-1}^1 \mu (\psi^B - (T^B)^4) d\mu\right)^2 \le \int_{-1}^1 \mu^2 d\mu \int_{-1}^1 (\psi^B - (T^B)^4)^2 d\mu \nonumber\\
    &\le \frac{2}{3} \int_{-1}^1 (\psi^B - (T^B)^4)^2 d\mu,
\end{align*}
we can get 
\begin{align*}
    \int_{0}^B e^{2\alpha x} |\langle \mu \psi^B \rangle|^2 dx &\le \frac23\int_0^B\int_{-1}^1 e^{2\alpha x} (\psi^B - (T^B)^4)^2 d\mu dx \nonumber\\
    &\le \frac13 \frac{1}{1-\alpha}  \int_0^1 \mu (\psi_b-T_b^4)^2 d\mu.
\end{align*}
Due to \eqref{lm4.=}, the above inequality implies 
\begin{align}\label{conv5}
    \int_{0}^B e^{2\alpha x} |\partial_x T^B |^2 dx \le \frac13 \frac{1}{1-\alpha}  \int_0^1 \mu (\psi_b-T_b^4)^2 d\mu.
\end{align}
Therefore,
\begin{align*}
   \int_0^B (T^B(x)- T^B(B))^2 dx &= \int_0^B \left(\int_x^B \partial_z T^B(z) dz \right)^2 dx \\
   &=  \int_0^B \left(\int_x^B e^{-\alpha z} e^{\alpha z} \partial_z T^B(z) dz \right)^2 dx  \\
   &\le \int_0^B\left( \int_x^B e^{2\alpha z} |\partial_z T^B|^2(z) dz \cdot \int_x^B e^{-2\alpha z} dz  \right)dx \\
   &\le \frac{1}{2\alpha}\int_0^B e^{2\alpha x} |\partial_x T^B|^2 dx \cdot 
   \int_0^B (e^{-2\alpha x}-e^{-2\alpha B}) dx \\
   &\le \frac16 \frac{1}{1-\alpha} \frac{e^{2\alpha B}-2\alpha B - 1}{2\alpha^2} e^{-2\alpha B} \int_0^1 \mu (\psi_b-T_b^4)^2 d\mu \\
   &\le  \frac16 \frac{1}{2\alpha^2(1-\alpha)}\int_0^1 \mu (\psi_b-T_b^4)^2 d\mu,
\end{align*}
where $(e^{2\alpha B}-2\alpha B - 1)e^{-2\alpha B} = 1- 2\alpha B e^{-2\alpha B} - e^{-2\alpha B} \le 1$.
Hence $T^B(x)-T^B(B) \in L^2_{\rm loc}(\mathbb{R}_{+})$ is uniformly bounded. 
% Therefore, there exists a subsequence $B_k$ such that 
% \begin{align}\label{convT}
%     T^{B_k}(x) - T^{B_k}(B_k) \rightharpoonup T(x) - T_\infty, \quad \text{weakly in } L_{\rm loc}^2(\mathbb{R}_{+}).
% \end{align}
Moreover, due to the boundary condition \eqref{eq:B1b}, $\partial_x T^B(B)=0$ and so the estimate \eqref{conv5} implies 
\begin{align}
    \int_0^B |\partial_xT^B(x) - \partial_x T^B(B)|^2 dx \le \frac{1}{3(1-\alpha)} \int_0^1 \mu (\psi_b-T_b^4)^2 d\mu,
\end{align}
thus $T^B(x) - T^B(B)$ is uniformly bounded in $H^1_{\rm loc}(\mathbb{R}_+)$. Moreover, $\|\partial_x^2 T^B\|_{L^2([0,B])} =  \|\langle \psi^B-(T^B)^4\rangle\|_{L^2([0,B])}\le \sqrt{2} \|\psi^B - (T^B)^4\|_{L^2([0,B]\times [-1,1])}$ is uniformly bounded. Therefore, there exists a subsequence $B_k$ such that 
\begin{align}\label{eq:h1conv}
    T^{B_k}(x) - T^{B_k}(B_k) \rightharpoonup T(x) - T_\infty,\quad \text{weakly in } H^2_{\rm loc}(\mathbb{R}_+). 
\end{align}
% Due to the compact embedding $L^2_{\rm loc}(\mathbb{R}_+) \subset H^1_{\rm loc}(\mathbb{R}_+)$, 
% \begin{align}
%     T^{B_k}(x) - T^{B_k}(B_k) \to T(x) - T_\infty, \text{ strongly in } L_{\rm loc}^2(\mathbb{R}_+).
% \end{align}
By the continuous embedding $  C_{\rm loc}^1 \subset H^2_{\rm loc}(\mathbb{R}_+)$, we get from the above convergence 
\begin{align}\label{eq:convT}
    T^{B_k}(x) \to T(x),\quad \text{strongly in } C^1_{\rm loc}(\mathbb{R}_+).
\end{align}

\smallskip

Next we show the convergence of $\psi^B(x)$. Due to the uniform estimate \eqref{eq:estB1}, $\|\psi^B - (T^B)^4 \|_{L^2([0,B]\times [-1,1])}$ is uniformly bounded in $B$. This, together with \eqref{eq:convT} implies that  there exists a subsequence $\{\psi^{B_k}\}$ such that as $k\to\infty$,
\begin{align}\label{eq:convpsi2}
    \psi^{B_k}(x,\mu) - (T^{B_k})^4(x) \rightharpoonup \psi(x,\mu) - T^4,\quad \text{ weakly in } L^2_{\rm loc}(\mathbb{R}_+\times [-1,1]).
\end{align}

\smallskip 

Next we show the limit $T,\psi$ solves system \eqref{eq:m1}-\eqref{eq:m2} with boundary conditions \eqref{eq:bd1}-\eqref{eq:bd2}.
First we show the limit $T(x)$, $\psi(x)$ satisfies the boundary condition \eqref{eq:bd1}-\eqref{eq:bd2}. Due to the trace operator $u \mapsto u(0)$ is continous (see \cite[Ex 8.18]{brezis2010functional}) for any $u \in H^1_{\rm loc}(\mathbb{R}_+)$, we can get from \eqref{eq:h1conv} that $T(0)=\lim_{k\to \infty} T^{B_k}(0)= T_b$. One can also check that $\varphi \to \varphi(0,\cdot)$ is also continous for any $\varphi \in L^2_{\loc}(\mathbb{R}_+\times [-1,1]) \cap \{\varphi: \mu \partial_x \varphi + \varphi \in C_{\rm loc}(\mathbb{R}_+)\}$ (one can use the formulas \eqref{eq:sol/mu+}-\eqref{eq:sol/mu-} to show this). Hence, $\psi^{B_k}(0,\mu) \rightharpoonup \psi^k(0,\mu)$ as $B_k \to \infty$.
% Hence we can get from \eqref{eq:convtilde} and \eqref{eq:convbar} that $\psi(0,\mu) = \lim_{k\to\infty}\psi^{B_k}(0,\mu) =\psi_b(\mu)$ for any $\mu \in [-1,1]$.
To show $(T,\psi)$ satisfies \eqref{eq:m1}-\eqref{eq:m2}, we apply a test function $h=h(x) \in C^1([0,x])$, $\varphi \in C^1([0,x]\times [-1,1])$ to \eqref{eq:B1} and \eqref{eq:B2}, integrate by parts, and obtain 
\begin{align}
    &\int_{0}^x \partial_z T^B \partial_z h dz - \partial_z T^B(z) h(z)\bigg|_0^x + \int_0^x \int_{-1}^1 (\psi^B - (T^B)^4) h d\mu dz = 0,\label{eq:w11}\\
    &\int_0^x \int_{-1}^1 \mu \psi^B \partial_z \varphi d\mu dz - \int_{-1}^1 \mu \varphi(z,\cdot) \psi^B(z,\cdot) d\mu \bigg|_0^x + \int_0^x \int_{-1}^1 (\psi^B - (T^B)^4) \varphi d\mu dz = 0,\label{eq:w12}
\end{align}
Due to \eqref{eq:convT}, we can pass to the limit 
\begin{align}
    \int_{0}^x \partial_z T^B \partial_z h dz \to \int_0^x \partial_z T \partial_z h dz ,%\quad -\partial_x T^B(x) h(x) \to -\partial_x T(x) h(x),
    \quad 
    \text{ as } B\to \infty.
\end{align}
Due to \eqref{eq:convT}-\eqref{eq:convpsi2}, we can pass to the limit 
\begin{align}
    \int_0^x \int_{-1}^1 \mu \psi^B \partial_z \varphi d\mu dz \to \int_0^x \int_{-1}^1 \mu \psi \partial_z \varphi d\mu dz,\quad \text{as } B\to \infty,%~ \int_{-1}^1 \mu \varphi(x,\cdot) \psi^B(x,\cdot)d\mu \to \int_{-1}^1 \mu \varphi \psi d\mu. % \quad \text{as } B\to\infty.
\end{align}
% From the estimate \eqref{eq:estB1}, the function $\psi^B - (T^B)^4$ is uniformly bounded in $L^2_{\rm loc}(\mathbb{R}_+\times [-1,1])$. Therefore, up to a subsequence, 
% \begin{align}
%     \psi^B - (T^B)^4 \rightharpoonup \psi - T^4,\quad \text{weakly in }L^2_{\rm loc}(\mathbb{R}_+\times [-1,1]).
% \end{align}
Using \eqref{eq:convpsi2}, we can pass to the limit as $B\to\infty$,
\begin{align}
    &\int_0^x \int_{-1}^1 (\psi^B - (T^B)^4) h d\mu dz  \to \int_0^x \int_{-1}^1 (\psi - T^4) h d\mu dz,\\
    &  \int_0^x \int_{-1}^1 (\psi^B - T^4) \varphi d\mu dz \to  \int_0^x \int_{-1}^1 (\psi - T^4) \varphi d\mu dz .
\end{align}
% From the estimate \eqref{eq:estB2} and system \eqref{eq:B1}-\eqref{eq:B2},
% \begin{align}
%     &\int_0^x\int_{-1}^1 e^{2\alpha z} |\partial_z^2 T^B|^2 dx\le   \frac12 \int_0^1 \mu (\psi_b - T_b^4)^2 d\mu. %\\
%     % &\int_0^x \int_{-1}^1 e^{2\alpha z} |\partial_x \langle \mu \psi^B \rangle| \le \frac12 \int_0^1 \mu (\psi_b - T_b^4)^2 d\mu,
% \end{align} 
% hence as $B\to\infty$, up to a subsequence,
% \begin{align}
%     &\partial_x^2 T^B \rightharpoonup \partial_x^2 T,\quad \text{weakly in } L^2_{\rm loc}([0,\infty]),% \\
%     % & \partial_x \langle \mu \psi^B \rangle \rightharpoonup \partial_x \langle \mu \psi \rangle, \quad \text{weakly in } L^2_{\rm loc}([0,\infty)).
% \end{align}
By the continuity of the trace operator, we get from the above facts,
\begin{align}
    \partial_x T^B(x) \to \partial_x T(x), \quad \text{almost everywhere.}
\end{align}
Due to $\mu \partial_x \psi + \psi \in C_{\rm loc}(\mathbb{R}_+)$, $\psi^B(x,\mu) \rightharpoonup \psi(x,\mu)$ in $L_{\mu}^2([-1,1])$ almost everywhere, 
\begin{align}
    \int_{-1}^1 \mu \varphi \psi^B d\mu \to \int_{-1}^1 \mu \varphi \psi d\mu.
\end{align}
% The estimate \eqref{eq:estB1} implies $\int_{-1}^1 \mu (\psi^B - (T^B)^4)^2 d\mu$ is bounded and thus $\langle \mu (\psi^B - (T^B)^4)^2 \rangle$ is uniformly bounded in $L^\infty_{\rm loc}(\mathbb{R}_+)$, hence 
% \begin{align}
%     \langle \mu (\psi^B - (T^B)^4)^2 \rangle \rightharpoonup^* \langle \mu (\psi - T^4)^2 \rangle,\quad \text{weakly-* in } L^\infty_{\rm loc}(\mathbb{R}_+).
% \end{align}
% Therefore, 
% \begin{align}
%     \langle \mu \psi^B \rangle
% \end{align}
Combining the above limits, we can pass to the limit $B\to\infty$ in \eqref{eq:w11}-\eqref{eq:w12} and 
\begin{align}
    &\int_{0}^x \partial_z T \partial_z h - \partial_z T(z) h(z)\bigg|_0^x + \int_0^x \int_{-1}^1 (\psi - (T)^4) h d\mu dz = 0,\label{eq:w/1}\\
    &\int_0^x \int_{-1}^1 \mu \psi \partial_z \varphi d\mu dz - \int_{-1}^1 \mu \varphi(z,\cdot) \psi(z,\cdot) d\mu \bigg|_0^x + \int_0^x \int_{-1}^1 (\psi - T^4) \varphi d\mu dz = 0 \label{eq:w/2}
\end{align}
holds.
Therefore, we have proved that there exists a weak solution $(T,\psi) \in C_{\rm loc}(\mathbb{R}_+)\times L^2_{\rm loc}(\mathbb{R}_+\times [-1,1])$ to system \eqref{eq:m1}-\eqref{eq:m2}.
\end{proof}

% In addition, we can show \eqref{eq:m1}-\eqref{eq:m2} holds in a weak sense.
% \begin{lemma}
%     The solution $(T,\psi)$ constructed above satisfies for any test function $h \in C^{\infty}(\mathbb{R}_+)$ and $\varphi \in C^{\infty}_{\loc}(\mathbb{R}_+\times [-1,1])$ that 
%     \begin{align}
%         &\int_0^x \partial_x T \partial_x h dx - h \partial_x T \bigg|_0^x + \int_0^x \int_{-1}^1  (\psi - T^4) h dx =0,\\
%         &\int_0^x \int_{-1}^1 \mu \psi \partial_x \varphi d\mu dx - \int_{-1}^1 \mu \psi \varphi \bigg|_0^x + \int_{0}^x \int_{-1}^1 (\psi - T^4)^2d\mu dx = 0
%     \end{align}
%     and 
%     \begin{align}
%         \partial_x T (x)= \langle \mu \psi (x,\cdot)\rangle ,\quad \text{for any }
%     \end{align}
%     holds for any $x>0$.
% \end{lemma}
% \begin{proof}
    
% \end{proof}
% Consequently, we are proving the following Lemma.
% \begin{lemma}
% System \eqref{eq:m1}-\eqref{eq:m2} associated to the boundary condition \eqref{eq:bd1}-\eqref{eq:bd2}, has a weak solution $\left(T,\psi\right) \in C^1_{\rm loc}(\mathbb{R}_{+})\times L^2_{\rm loc}(\mathbb{R}_{+}\times \mathbb{S}^2)$.
% \end{lemma}
% First from the weighted estimate \eqref{eq:estB2} we can get 
% \begin{align*}
%     &\|e^{\alpha z} \partial_x (T^B)^{5/2}\|_{L^2([0,B])} \text{ is uniformed bounded }, \\
%     &\|e^{\alpha z} (\psi^B - (T^B)^4) \|_{L^2([0,B]\times [-1,1])} \text{ is uniformed bounded}
% \end{align*}
% Hence 
\subsection{Weighted estimate}\label{ProofWEightEst1}
Next we show the weighted estimate \eqref{eq:es-2}.
%  We next show the exponential decay of solutions for system \eqref{eq:m1}-\eqref{eq:m2}. 
% \begin{lemma}
%     The weak solution constructed in Lemma \ref{lm5} satisfies
% the following estimates 
% % and 
% % \begin{align}\label{eq:es-2}
% %     &\int_0^x e^{2\alpha z} 4T^3|\partial_x T|^2 dx + \int_0^x \int_{-1}^1  (1-\alpha \mu)e^{2\alpha z} (\psi-T^4)^2 d\mu dx\nonumber\\
% %     &\quad + \frac{1}{2}\int_{-1}^1 \mu e^{2\alpha z} (\psi(x,\cdot)-T^4(x))^2 d\mu + \frac{1}{2}\int_{-1}^0 |\mu| (\psi-T^4)^2 d\mu\bigg|_{x=0}\nonumber\\
% %     &\qquad\le \frac{1}{2} \int_0^1 \mu (\psi_b-T_b^4)^2 d\mu
% % \end{align}
% %  for any $x>0$.  
% \end{lemma}

\begin{proof}[Proof of Theorem \ref{thm:ex}, weighted estimate]

Due to the lower semi-continuous of the norm $\|\cdot\|_{L^2([0,x])}$, the estimate \eqref{eq:estB1} and the strong convergence \eqref{eq:convT} of $T$ and the weak convergence \eqref{eq:h1conv} of $\partial_x T^B$ imply 
\begin{align}\label{eq:ec/1}
    \int_0^x 4T^3 |\partial_z T|^2 dz \le \liminf_{B \to\infty} \int_0^x 4 (T^B)^3 |\partial_z T^B|^2 dz. 
\end{align} 
Due to the weak convergence \eqref{eq:convpsi2} of $\psi^B - (T^B)^4$,
\begin{align}\label{eq:ec/2}
    \int_0^x \int_{-1}^1 (\psi-T^4)^2 d\mu dz \le \liminf_{B\to\infty} \int_0^x \int_{-1}^1 (\psi^B - (T^B)^4)^2 d\mu dz.
\end{align}
From \eqref{eq:estB1},  $\langle \mu (\psi^B - (T^B)^2)^2 \rangle$ is non-negative and uniformly bounded in $L^\infty_{\rm loc}(\mathbb{R}_{+})$. Hence 
\begin{align*}
    \langle \mu (\psi^B - (T^B)^4)^2 \rangle \rightharpoonup^* \langle \mu (\psi - T^4)^2 \rangle,\quad \text{weakly-* in }L^\infty_{\rm loc}(\mathbb{R}_{+}).
\end{align*}
Therefore, 
\begin{align}
    \limsup_{B \to \infty}  \langle \mu (\psi^B - (T^B)^4)^2 \rangle \ge \langle \mu (\psi - T^4)^2 \rangle \ge 0.
\end{align}
Combining the above inequality with \eqref{eq:ec/1}-\eqref{eq:ec/2}, we can take $\limsup$ on \eqref{eq:estB1} and using $\limsup_{n\to\infty} (a_n + b_n) \ge \limsup_{n\to\infty} a_n +\liminf_{n\to\infty}b_n$ to get 
\begin{align}\label{eq:es-1}
    &\int_0^x 4T^3 |\partial_z T|^2 dz + \int_0^x \int_{-1}^1 (\psi-T^4)^2 d\mu dz + \frac12 \langle \mu (\psi(x,\cdot)-T^4(x))^2\rangle \nonumber\\
    &\quad + \frac12 \int_{-1}^0 |\mu| (\psi(0,\cdot)-T_b^4)^2 d\mu \le \frac12 \int_0^1 \mu (\psi_b- T_b^4)^2 d\mu,
\end{align}
holds for any $x\in\mathbb{R}_+$. Note in the above $\langle \mu (\psi(x,\cdot)-T^4(x))^2\rangle \ge 0$ for any $x\in\mathbb{R}_+$.

Similarly, we can pass to the limit $B\to\infty$ in \eqref{eq:estB2} and use the weak convergence up to a subsequence implied by \eqref{eq:estB2}:
\begin{align*}
    & e^{\alpha x} |\partial_x (T^B)^{\frac52}| \rightharpoonup e^{\alpha x} |\partial_x T^{\frac52}|,\quad \text{weakly in }L^2_{\rm loc}(\mathbb{R}_+),\\
    & e^{\alpha x} (\psi^B - (T^B)^4) \rightharpoonup e^{\alpha x} (\psi - T^4),\quad \text{weakly in } L^2_{\rm loc}(\mathbb{R}_+\times [-1,1]), \\
    &e^{2\alpha x} \langle \mu (\psi^B - (T^B)^4)^2 \rangle \rightharpoonup^* e^{2\alpha x} \langle \mu (\psi-T^4)^2 \rangle,\quad \text{weakly-* in }L^\infty_{\rm loc}(\mathbb{R}_+),\\
    & \int_{-1}^0 |\mu|(\psi^B(0,\cdot)-T_b^4)^2 d\mu \to %\rightharpoonup^* 
    \int_{-1}^0 |\mu| (\psi(0,\cdot)-T_b^4)^2 d\mu,\quad \text{weakly-* in }L^\infty_{\rm loc}(\mathbb{R}_+),
\end{align*}
Consequently, we obtain  
\begin{align*}
    &\int_0^x e^{2\alpha z} 4T^3 |\partial_z T|^2 dz \le \liminf_{B \to\infty} \int_0^x e^{2\alpha} 4 (T^B)^3 |\partial_z T^B|^2 dz , \\
    &\int_0^x \int_{-1}^1 e^{2\alpha z} (\psi-T^4)^2 d\mu dz \le \liminf_{B\to\infty} \int_0^x \int_{-1}^1 e^{2\alpha z} (\psi^B - (T^B)^4)^2 d\mu dz, \\
    &\limsup_{B \to \infty}  \langle \mu (\psi^B - (T^B)^4)^2 \rangle \ge \langle \mu (\psi - T^4)^2 \rangle \ge 0, \\
    &\lim_{B\to\infty} \int_{-1}^0 |\mu|(\psi^B(0,\cdot)-T_b^4)^2 d\mu = \int_{-1}^0 |\mu| (\psi(0,\cdot)-T_b^4)^2 d\mu.
\end{align*}
Therefore, we take $\limsup_{B\to\infty}$ in \eqref{eq:estB2} and use the above inequalities to get 
\begin{align}
    &\int_0^x e^{2\alpha z} 4T^3 |\partial_z T|^2 dx + (1-\alpha) \int_0^x \int_{-1}^1 e^{2\alpha z} (\psi-T^4)^2 d\mu dz \nonumber\\
    &\quad + \frac12  e^{2\alpha x}\langle \mu (\psi(x,\cdot) - T^4(x))^2 \rangle + \frac12 \int_{-1}^0 |\mu| (\psi(0,\cdot)-T_b^4)^2 d\mu \nonumber \\
    &\quad \le \frac12 \int_0^1 \mu (\psi_b- T_b^4)^2 d\mu,\quad \text{for any }x\in\mathbb{R}_+.
\end{align}
From the above estimate, $e^{2\alpha x}\langle \mu (\psi-T^4)^2\rangle$ is bounded, hence $\langle \mu (\psi-T^4)^2 \rangle$ vanishes as $x\to\infty$. Since $\alpha<1$ is arbitrary, we also have $e^{2\alpha x}\langle \mu (\psi-T^4)^2\rangle \to 0$ as $x\to\infty$. We thus let $x\to\infty$ in the above inequality and get the estimate \eqref{eq:es-2}.

\end{proof}

\subsection{Exponential decay}\label{ProofExpo1}
Now, we will  establish the exponential decay property of solutions to system \eqref{eq:m1}-\eqref{eq:m2} using the estimate 
% \eqref{eq:es-1} and 
\eqref{eq:es-2}. 
% \begin{lemma}
%     There exists $T_\infty,\psi_\infty(\mu) \ge 0$, for any $\mu \in[-1,1]$, such that 
%     \begin{align}
%         |T(x) - T_\infty| \le C e^{-\alpha z},\quad |\psi(x,\mu) - T_\infty^4| \le C e^{-\alpha z},
%     \end{align}
% where$\alpha$ is given in Lemma \ref{lm5} and the constant $C$ is only depending on $\alpha$ and the value $\int_0^1 \mu(\psi_b-T_b^4)^2 d\mu$.
% \end{lemma}
\begin{proof}[Proof of Theorem \ref{thm:ex},  {exponential decay}]
    Let $x_2 > x_1 \ge0$, we have from \eqref{eq:es-2},
    \begin{align*}
        |T^{\frac52}(x_1) - T^{\frac52}(x_2) | &= \left|\int_{x_1}^{x_2}\partial_z T^{\frac52} dz\right| = \left|\int_{x_1}^{x_2}e^{-\alpha x }e^{\alpha z}\partial_z T^{\frac52} dz\right| \\
        &\le \left(\int_{x_1}^{x_2} e^{-2\alpha z} dz \right)^{\frac12} \left(\frac{25}{16}\int_{x_1}^{x_2} e^{2\alpha z} 4T^3|\partial_z T|^2 dz\right)^{\frac12} \\
        &\le \frac{5}{4}\frac{1}{\sqrt{2\alpha}} \left(e^{-2\alpha x_1}-e^{-2\alpha x_2}\right)^{\frac12}  \left(\frac{1}{2} \int_0^1 \mu (\psi_b-T_b^4)^2 d\mu\right)^{\frac12}.
    \end{align*}
    Taking $x_1,x_2 \ge -(2\log \varepsilon) /2\alpha$ with $M>0$ large enough, we can get 
    \begin{align*}
        (e^{-\alpha x_1}-e^{-\alpha x_2}) \le e^{-\alpha x_1}+e^{-\alpha x_2} \le 2\varepsilon^2,%\frac{2}{M},
    \end{align*}
    % by taking $M=2/\eps$ we can see that for any $x_1,x_2 \ge \log(M)/\alpha$,
    and so
    \begin{align*}
        |T^{\frac52}(x_1)-T^{\frac52}(x_2)| \le \varepsilon\frac{5}{4} \frac{1}{\sqrt{2\alpha}} \left( \int_0^1 \mu (\psi_b-T_b^4)^2 d\mu\right)^{\frac12}%\frac{C}{\alpha^\frac32} \eps
    \end{align*}
    hence 
    \begin{align*}
        \lim_{x\to\infty} T(x) = T_{\infty} \text{ exists and $0\le T_\infty\le \gamma$ is finite.}
    \end{align*}

    Next we show $|T(x)-T_\infty|$ decays to zero exponentially as $x$ goes to infinity. 
    We first show that $\partial_x T (x)= \langle \mu \psi(x,\cdot)\rangle$ holds for any $x\in\mathbb{R}_+$. To see this, we follow the work \cite{klar2001numerical} and compare \eqref{eq:m1} with \eqref{eq:m2} to get
    \begin{align}
        \partial_x (\partial_x T -  \langle \mu \psi \rangle) = 0.
    \end{align}
    Let $j:=\partial_x T -  \langle \mu \psi \rangle$. Multiplying \eqref{eq:m2} by $\mu$ and integrating over $[-1,1]$ gives
    
        \begin{align}\label{KlarEst}
        \partial_x \langle \mu^2 \psi \rangle = \langle \mu \psi \rangle.
    \end{align}
We thus get $j= \partial_x(T+\langle \mu^2 \psi \rangle).$ By the boundness of $T,\psi$, $j=0$ and $T(x)+\langle \mu^2 \psi (x,\cdot)\rangle$ is a constant, i.e.
    \begin{align}\label{eq:pT=}
        \partial_x T(x) = \langle \mu \psi(x,\cdot) \rangle, \quad  \text{ for all}\,\,\, \forall x\in \mathbb{R}_{+}.
    \end{align}
    From the estimate \eqref{eq:es-2}, we obtain 
    \begin{align*}
        (1-\alpha) \int_0^x \int_{-1}^1 e^{2\alpha z} (\psi-T^4)^2 d\mu dz \le \frac12 \int_0^1 \mu (\psi_b-T_b^4)^2 d\mu,
    \end{align*}
    Therefore, 
    \begin{align}\label{eq:mupsi2}
        \int_0^x e^{2\alpha z} |\langle \mu \psi \rangle|^2 dz &= \int_0^x e^{2\alpha z} \left(\int_{-1}^1 \mu (\psi-T^4)d\mu \right)^{2}dz \nonumber \\
        &\le \int_0^x e^{2\alpha z} \left(\int_{-1}^1 \mu^2 d\mu \right) \left(\int_{-1}^1 (\psi-T^4)^2 d\mu \right)dz \nonumber\\
        &\le \frac{2}{3}\int_0^x \int_{-1}^1 e^{2\alpha z} (\psi-T^4)^2 d\mu dz \nonumber \\
        &\le \frac{1}{3(1-\alpha)} \int_0^1 \mu (\psi_b-T_b^4)^2 d\mu.
    \end{align}
    Due to the relation \eqref{eq:pT=}, the above inequality is equivalent to
    \begin{align}\label{eq:pt2}
        \int_0^x e^{2\alpha z} |\partial_z T|^2 dz \le  \frac{1}{3(1-\alpha)} \int_0^1 \mu (\psi_b-T_b^4)^2 d\mu.
    \end{align}
    We get from the above inequality that 
    \begin{align}\label{eq:tdecay1}
        |T(x)-T_\infty| &= \left|-\int_x^\infty \partial_z T dz\right|  = \left|\int_x^\infty e^{-\alpha z} e^{\alpha z} \partial_z T dz \right| \nonumber\\
        &\le \left(\int_x^\infty e^{-2\alpha z} dx\right)^{\frac12} \left(\int_x^\infty e^{2\alpha z} |\partial_z T|^2 dx \right)^{\frac12}\nonumber\\
        %  \le \int_x^\infty \left(\frac{C_b}{\alpha(1-\alpha)}\right)^\frac12 e^{-\alpha z} dx \nonumber\\
        & \le \frac{1}{\sqrt{2\alpha}} e^{-\alpha x} \left(\frac{1}{3(1-\alpha)} \int_0^1 \mu (\psi_b-T_b^4)^2 d\mu\right)^{\frac12},%\frac{C_b^{\frac12}}{\alpha^{\frac32}(1-\alpha)^{\frac12}} e^{-\alpha z},
    \end{align}
    and thus $T$ decays exponentially to some constant $T_\infty$ and the first inequality of \eqref{eq:thm.ex} holds. A direct consequence of the above estimate is \eqref{eq:Tbd}.

    Moreover, using \eqref{eq:m1} and the estimate \eqref{eq:es-2}, we have 
    \begin{align*}
        \int_0^x e^{2\alpha z} |\partial_z^2 T|^2 dz &= \int_0^x e^{2\alpha z} |\langle \psi-T^4 \rangle|^2 dz \le \int_0^x \int_{-1}^1 e^{2\alpha z}(\psi-T^4)^2 d\mu dz \\
        &\le \frac12 \int_0^1 \mu (\psi_b - T_b^4)^2 d\mu.
    \end{align*}
    By \eqref{eq:pt2} and the above inequality, the Barbalat's lemma (\cite{farkas2016variations}) implies 
    \[
\partial_x T(x) \to 0, \quad \text{ as } x\to \infty.
    \] 
We can thus get 
    \begin{align} \label{eq:pTdecay}
        |\partial_x T(x)| &= \left|\int_x^\infty \partial_z^2 T dz\right| = \left|\int_x^\infty e^{-\alpha z} e^{\alpha z} \partial_z^2 T dz \right| \nonumber\\
        &\le \left(\int_x^\infty e^{-2\alpha z} dz \right)^{\frac12} \left(\int_x^\infty e^{2\alpha z} |\partial_z^2 T|^2 dz\right)^{\frac12} \nonumber\\
        &\le \frac{1}{\sqrt{2\alpha}} e^{-\alpha z} \left(\frac12 \int_0^1 \mu (\psi_b - T_b^4)^2 d\mu\right)^{\frac12}.
    \end{align}
    % which also implies due to \eqref{eq:pT=},
    % \begin{align}
    %     |\langle \mu \psi(x,\cdot) \rangle| \le  \frac{1}{\sqrt{2\alpha}} e^{-\alpha z} \left(\frac12 \int_0^1 \mu (\psi_b - T_b^4)^2 d\mu\right)^{\frac12}.
    % \end{align}
%     This implies
%     $$|\partial_x \langle \mu^2 \psi \rangle| = |\langle \mu \psi \rangle| \le \frac{1}{\sqrt{2\alpha}} e^{-\alpha z} \left(\frac12 \int_0^1 \mu (\psi_b - T_b^4)^2 d\mu\right)^{\frac12}.$$

\smallskip 

Finally, we show the decay of $\psi$. We can pass to the limit $B\to\infty$ in the formula \eqref{eq:sol/mu+}-\eqref{eq:sol/mu-} and show $\psi$ satisfies the formula \eqref{eq:sol1/mu+}-\eqref{eq:sol1/mu-}. Therefore,
    for $\mu>0$,
    \begin{align}\label{eq:dec1}
        &|\psi(x,\mu) - T_\infty^4|\\
         &= \left|\psi_b e^{-\frac{x}{\mu}} + \int_0^x \frac{1}{\mu} e^{-\frac{x-s}{\mu}} T^4(s) ds - T_\infty^4 \right| \nonumber\\
        & = \left|\psi_b e^{-\frac{x}{\mu}} + \int_0^x \frac{1}{\mu} e^{-\frac{x-s}{\mu}}(T^4(s) - T_\infty^4) ds - T^4_\infty e^{-\frac{x}{\mu}} \right| \nonumber\\
        &\le  |\psi_b-T_\infty^4| e^{-\frac{x}{\mu}} + \int_0^x \frac{1}{\mu} e^{-\frac{x-s}{\mu}}|T^4(s) - T_\infty^4| ds  \nonumber \\
        &\le |\psi_b - T_b^4| e^{-\frac{x}{\mu}} + |T_b^4-T_\infty^4| e^{-\frac{x}{\mu}} \nonumber\\
        &\quad + \int_0^x \frac{1}{\mu}e^{-\frac{x-s}{\mu}} |T(s)-T_\infty|\cdot (T^2(s)+T_\infty^2)(T(s)+T_\infty) ds \nonumber\\
        &\le  |\psi_b - T_b^4| e^{-\frac{x}{\mu}} + 4(T_b+M_{\alpha})^3 M_{\alpha} e^{-\frac{x}{\mu}} + \int_0^x \frac{1}{\mu}e^{-\frac{x-s}{\mu}} M_{\alpha} e^{-\alpha x} 4(T_b+2M_{\alpha})^3 ds \nonumber\\
        &= |\psi_b - T_b^4| e^{-\frac{x}{\mu}} + 4(T_b+M_{\alpha})^3 M_{\alpha} e^{-\frac{x}{\mu}} + 4(T_b+2M_{\alpha})^3M_{\alpha} \frac{1}{1-\mu \alpha}(e^{-\alpha x} - e^{-\frac{x}{\mu}}) \nonumber\\
        &\le |\psi_b - T_b^4| e^{-\frac{x}{\mu}} + 4(T_b+2M_{\alpha})^3M_{\alpha} \frac{1}{1- \mu\alpha}e^{-\alpha x},        % &\le |\psi_b-T_\infty^4| e^{-\frac{x}{\mu}} + Ce^{-\alpha x}(1-e^{-\frac{x}{\mu}}) \nonumber\\
        % &\le |\psi_b-T_\infty^4| e^{-\frac{x}{\mu}} + Ce^{-\alpha x},
    \end{align}
    which gives the second inequality of \eqref{eq:thm.ex},
    and for $\mu<0$,
    \begin{align}\label{eq:dec2}
       &| \psi(x,\mu) - T_\infty^4| \nonumber\\
       &= \left|- \int_x^\infty \frac{1}{\mu} e^{-\frac{x-s}{\mu}}T^4(s) ds - T_\infty^4 \right|\nonumber\\
       & = \left|- \int_x^\infty \frac{1}{\mu} e^{-\frac{x-s}{\mu}}T^4(s) ds  + \int_x^\infty \frac{1}{\mu} 
       e^{-\frac{x-s}{\mu}} T^4_\infty ds\right| \nonumber\\
       &\le -\int_x^\infty \frac{1}{\mu} 
       e^{-\frac{x-s}{\mu}}  |T^4 - T_\infty^4| ds \nonumber \\
       &\le -\int_x^\infty \frac{1}{\mu} e^{-\frac{x-s}{\mu}} |T(s)-T_\infty|\cdot (T^2(s)+T_\infty^2)(T(s)+T_\infty) ds \nonumber\\
       &\le -\int_x^\infty e^{-\frac{x-s}{\mu}} M_{\alpha} e^{-\alpha x} 4(T_b+2M_{\alpha})^3 ds \nonumber\\
       &= -4(T_b+2M_{\alpha})^3 M_{\alpha}\frac{1}{-1+\mu\alpha} e^{-\alpha x} \nonumber\\
       &\le 4(T_b+2M_{\alpha})^3M_{\alpha}\frac{1}{1-\mu\alpha} e^{-\alpha x},
    %    &\le -C e^{-\alpha x}\int_x^\infty \frac{1}{\mu} e^{-\frac{x-s}{\mu}} ds \nonumber\\
    %    &= C e^{-\alpha x} .
    \end{align}
    which gives the last inequality of \eqref{eq:thm.ex} and finishes the proof of Theorem \ref{thm:ex}. 
    % Since $\mu \in (-1,0)$ and $\alpha \in (0,1)$,  $e^{-x/\mu} \le e^{-\alpha x}$, and we conclude from the above inequalities that 
    %   \begin{align*}
        % | \psi(x,\mu) - T_\infty^4| 
    %  &\le -C e^{-\alpha z} \int_x^\infty \frac{1}{\mu} 
    %    e^{-\frac{x-s}{\mu}} ds \nonumber \\
    %    &\le Ce^{-\alpha x},\quad \text{for any } x \in \mathbb{R}_{+},\mu \in[-1,1].
    % \end{align*}
    % Then, we have 
    % \begin{align}
    %     |\psi(x,\mu) - T_\infty^4| \le C e^{-\alpha z}, \text{ for any } x \in \mathbb{R}_{+},\mu \in[-1,1].
    % \end{align}
    % Combining the above fact with \eqref{eq:tdecay1}, we get \eqref{eq:thm.ex} and finish the proof of Theorem \ref{thm:ex}.
    \end{proof}
% In the end of this section, it follows that  from Lemma \ref{lm:ex}, Lemma \ref{lm:unique} and Lemma \ref{lm:ene}, we deduce that Theorem \ref{thm:ex} holds.
% {\color{red}
% \begin{rem}
%  1. Could we improve the regularity of $\psi$? Since we have used the formula in the proof of decay, does the regularity of solutions allow us  to use such a method?

%  2. Could we derive the exponential decay by a direct estimate without using the formula in the appendix?

%  3. The estimate in Lei Wu 's paper \cite[equation (4.118)]{wu2015geometric} seems to be not correct, $\langle \sin^2 \phi,r\rangle (0)$ is not zero and should not be neglected. (this is like $\langle \mu^2 \psi \rangle$ in our case). I tried a proof using Lei Wu's method to show the exponential convergence of $|\psi -T_\infty^4|$ but failed.

%  4. Should the exponentially decay of $\psi$ depends on the spectral assumption?

% %  \begin{align}
% %     \mu \partial_x (\partial_x \psi) + \partial_x \psi - 4T^3\partial_x T = 0
% %  \end{align}
% \end{rem}
% }

    \begin{rem}
        For the case $T_b=0,\psi_b=0$, it is obvious that zero is the solution to system \eqref{eq:m1}-\eqref{eq:m2}.
        % for all $x\in\mathbb{R}_+$ and $\mu \in(-1,1)$.
        For the case $T_b >0$, $\psi_b=0$, we assume $T(x)=0$ on $(a,b)\in \mathbb{R}_+$. Then the formula (see \eqref{eq:sol/mu+}-\eqref{eq:sol/mu-})
        \begin{align*}
        \langle \psi(x ,\cdot)\rangle =\int_0^1 \psi_b(\mu) e^{-\frac{x}{\mu}} d\mu + \int_0^1\int_0^\infty \frac{1}{\mu}e^{-\frac{|x-s|}{\mu}}T^4(s)dsd\mu
        \end{align*}
        implies $\langle \psi(x,\cdot) \rangle >0$ for any $ x\in (a,b)$.
        From \eqref{eq:m1}, $\partial_x^2 T (x) = -\langle \psi(x,\cdot) - T^4 (x)\rangle = -\langle \psi (x,\cdot)\rangle <0$ and thus $T(x)$ is concave on the interval $(a,b)$, which contradicts to the assumption $T(x)=0$ on this interval. Therefore, $T(x)>0$ for any $x\in \mathbb{R}_+$.
        For the case $T_b=0$ and $\psi_b(0,\mu) \neq 0$ on the interval $\mu \in (c,d) \subset (0,1).$ Then $\int_0^1 \psi_b(\mu) d\mu >0$ and by the above formula, $\langle \psi(x,\cdot) \rangle >0$. Hence by the same contradiction argument, we have $T(x)>0$ for $x\in\mathbb{R}_+$. Finally for the case $T_b, \psi_b$ both are not zero,  $\int_0^1 \psi_b d\mu >0$ and so $\langle \psi(x,\cdot) \rangle>0$. Therefore, following the same contradiction argument, $T(x) >0$ for all $x\in\mathbb{R}_+$. We conclude that in all cases $T(x)>0$ for all $x\in \mathbb{R}_+$ if $T_b,\psi_b$ are not both zero. Here whether $\lim_{x\to\infty} T(x)$ is positive or not is not known.

        %  If either $T_b$ or $\psi_b$ is not zero, we can show that $T(x)>0$ and $\psi(x,\mu)>0$ for all $x\in\mathbb{R}_+$, $\mu \in [-1,1]$. 
        % Otherwise, 
        % from the formula 
        % \begin{align*}
        %     \langle \psi(x ,\cdot)\rangle =\int_0^1 \psi_b(\mu) e^{-\frac{x}{\mu}} d\mu + \int_0^1\int_0^\infty \frac{1}{\mu}e^{-\frac{|x-s|}{\mu}}T^4(s)dsd\mu,
        % \end{align*}
        % we can see that if $\psi_b \neq 0$ then $\langle \psi(x,\cdot) \rangle >0$ for all $x>0$ and if $\psi_b=0,$ $\psi(x,\cdot)>0$ if $T(x)\neq 0$ on some interval  $(a,b)\subset \mathbb{R}_+$.
        % If $\psi_b\neq 0,T_b=0$ and $T \equiv 0$ on some interval $[a,b]$ then $\partial_x^2 T = -\langle \psi \rangle <0$ on this interval. This contradicts to the assumption $T(a)=T(b)=0$, so $T(x)>0$ for any $x>0$. From the formulas \eqref{eq:sol/mu+} and \eqref{eq:sol/mu-}, $\psi(x,\mu)>0$ for all $x>0$ and $\mu \in [-1,1]$. Therefore, if $\psi_b=0,T_b \neq 0$ and $T\equiv 0$ on some interval $(a,b)$, then from \eqref{eq:sol/mu+}-\eqref{eq:sol/mu-} and $T_b>0$ ( so $a>0$), we get $\psi(x,\mu)>0$ for all $x>0$ and $\mu\in[-1,1]$. The same convex argument implies that $T(x)>0$ for all $x>0$. Thus, the limit of $T,\psi$ at $x\to\infty$ can be zero.
    \end{rem}
 %%%%%%%%%%%%%%%%%%%%%%%%%%%%%%%%%%%%%%%%%%%%%%%%
%%%%%%%%%%%%%%%%%%%%%%%%%%%%%%%%%%%%%%%%%%%%%%%%
%%%%%%%%%%%%%%%%%%%%%%%%%%%%%%%%%%%%%%%%%%%%%%%%
      \section{Linearized  problem}\label{LMB0}
%%%%%%%%%%%%%%%%%%%%%%%%%%%%%%%%%%%%%%%%%%%%%%%%
%%%%%%%%%%%%%%%%%%%%%%%%%%%%%%%%%%%%%%%%%%%%%%%%
%%%%%%%%%%%%%%%%%%%%%%%%%%%%%%%%%%%%%%%%%%%%%%%%
This section focus on the study of the linearized system \eqref{eq:l-1/i}-\eqref{eq:l-2/i} of \eqref{eq:m1}-\eqref{eq:m2}. We will present the spectral assumption and prove Theorem \ref{thm2}.
% In this section, we consider the linearization of system \eqref{eq:m1}-\eqref{eq:m2}. The stability analysis of the linearized system is important to understand the perturbation of the nonlinear system. It is also important in proving uniqueness for the nonlinear Milne problem.%Moreover, it is also important for understanding the higher order boundary expansions. 
Let $T$ be a non-trivial solution to system \eqref{eq:m1}-\eqref{eq:m2}. We consider system \eqref{eq:l-1/i}-\eqref{eq:l-2/i}:
    \begin{align}
        &\partial_x^2 g + \langle \phi - 4T^3 g \rangle=\langle S_1 \rangle,\label{eq:l-1}\\
        &\mu\partial_x \phi + (\phi - 4T^3 g ) =S_1,\label{eq:l-2}
    \end{align}
    with boundary conditions 
    \begin{align}
        &g(0) = 0, \label{eq:l-1b}\\
        &\phi(0,\mu) = \phi_b(\mu), \text{ for any }\mu \in (0,1],\label{eq:l-2b}
    \end{align}
    where the source terms $S_1$ is a function that decays to zero at infinity. Since $S_1$ does not have a definite sign, the solutions to the above system are not necessarily positive. Moreover, $T$ may not be monotonic.
    % From the space depending of $T$, we cannot guarantee the positivity of $\left( g,\phi \right)$.
    Therefore, the monotonicity technique used in section \ref{section2} to show the existence of solutions for the nonlinear system \eqref{eq:m1}-\eqref{eq:m2} is not applicable here for the linearized model. Instead, we use the Banach fixed point theorem to show the existence of the linearized model on a bounded interval $[0,B]$.  
    % We here follow the procedures in \cite{bardos1984diffusion, golse1988boundary} and also \cite{wu2015geometric} but we need a spectral assumption to get the corresponding stability estimates. We first show the existence of \eqref{eq:l-1}-\eqref{eq:l-2} in the interval $[0,B]$ by using the Banach fixed point theorem.
    Then, a uniform weighted estimate is derived, allowing us to pass to the limit $B\to\infty$ and show the existence for system \eqref{eq:l-1}-\eqref{eq:l-2}. Finally, a contradiction argument is used for showing the uniqueness of solutions.

To establish the stability estimates and show the existence and uniqueness for system \eqref{eq:l-1}-\eqref{eq:l-2}, a spectral assumption is proposed. With the help of the spectral assumption, we derive the weighted estimate on solutions of system \eqref{eq:l-1}-\eqref{eq:l-2}. An explanation of this assumption is given in section \ref{LMB1} and the assumption will be shown later in the last section to hold when boundary conditions are close to the well-prepared case.    

\subsection{The spectral assumption} \label{LMB1}
We consider the spectrum of the nonlinear operator $\mathcal{F}$ defined in \eqref{eq:opF} with absorbing boundary conditions. First, according to the theory of linear transport operator, there exists $(g,\phi)\neq (0,0)$ such that 
\begin{align}
    \mathcal{F}(g,\phi)  = 0,
\end{align} 
hence $0$ is an eigenvalue (see \cite[Appendix 1]{bardos1984diffusion}). Next we show that all eigenvalues lie in $\Re z \le 0$ under the spectral assumption. 
\begin{lemma}\label{lem:ls}
    Under the spectral assumption \ref{asA}, the spectrum of $\mathcal{F}$ in $L^p(\mathbb{R}_+)\times L^p(\mathbb{R}_+\times[-1,1])$ (with $1\le p \le \infty$), is contained in the half-plane. i.e., $\Re z \le 0$  where $0$ belongs to the spectrum.
\end{lemma}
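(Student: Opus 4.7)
The plan is to prove $\Re\lambda \le 0$ for every $\lambda$ in the point spectrum of $\mathcal{F}$ on $L^2\times L^2$ (the fact that $0$ itself is an eigenvalue being inherited from the linear transport case, as in \cite[Appendix~1]{bardos1984diffusion}, and the extension from $L^2$ to general $L^p$ following by standard density/duality considerations). Let $(g,\phi)\neq 0$ be an eigenfunction satisfying $\mathcal{F}(g,\phi)=\lambda(g,\phi)$ with the natural homogeneous boundary data $g(0)=0$ and $\phi(0,\mu)=0$ for $\mu\in(0,1]$, together with the decay at infinity inherited from the domain of $\mathcal{F}$.

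The main step is a Bardos--Santos--Sentis style energy identity: test the first eigenequation against the weighted conjugate $4T^3\bar g$ and the second eigenequation against $\bar\phi$, take real parts, and add. The key algebraic observation is that the coupling cross-terms and the diagonal contributions organize, up to signs, into the perfect square $\int_0^\infty\!\int_{-1}^1 |\phi-4T^3 g|^2 d\mu dx$; the transport streaming term contributes the nonpositive boundary flux $-\tfrac12\int_{-1}^0 |\mu|\,|\phi(0,\mu)|^2 d\mu$ after using the absorbing condition on $\mu>0$ and decay at infinity; and two integrations by parts on the diffusion term yield $\Re\int_0^\infty 4T^3\bar g\,\partial_x^2 g\,dx = -\int_0^\infty 4T^3|\partial_x g|^2 dx + \tfrac12\int_0^\infty \partial_x^2(4T^3)|g|^2 dx$. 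Assembling these pieces gives the identity
\begin{align*}
&-\!\int_0^\infty 4T^3|\partial_x g|^2 dx + \tfrac12 \!\int_0^\infty \partial_x^2(4T^3)|g|^2 dx - \!\int_0^\infty\!\!\int_{-1}^1 |\phi-4T^3 g|^2 d\mu dx \\
&\qquad - \tfrac12\int_{-1}^0 |\mu|\,|\phi(0,\mu)|^2 d\mu \;=\; \Re(\lambda)\Big(\int_0^\infty 4T^3|g|^2 dx + \int_0^\infty\!\!\int_{-1}^1 |\phi|^2 d\mu dx\Big).
\end{align*}

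The next step is to invoke the spectral assumption \ref{asA}: by Lemma~\ref{LemmaSA}, to be established in this section, \ref{asA} is equivalent to the inequality $\int_0^\infty 4T^3|\partial_x f|^2 dx + \int_0^\infty \partial_x(4T^3)\,f\,\partial_x f\,dx \ge 0$ for every $f\in C^1_{\rm loc}(\mathbb{R}_+)$ with $f(0)=0$, which after one further integration by parts is the same as $\int_0^\infty 4T^3|\partial_x f|^2 dx \ge \tfrac12\int_0^\infty \partial_x^2(4T^3)\,f^2\,dx$. Applied to $\Re g$ and $\Im g$ separately, this shows that the first two terms on the left-hand side of the identity combine to a nonpositive quantity, so that the entire left-hand side is $\le 0$; whereas the coefficient of $\Re(\lambda)$ on the right is strictly positive unless $(g,\phi)\equiv 0$ (using $T>0$ on $\mathbb{R}_+$ from the Remark at the end of Section~\ref{proofTh1}), which is excluded by the eigenfunction hypothesis. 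Hence $\Re\lambda \le 0$. For the continuous and residual parts of the spectrum one repeats the same computation on approximate eigenvectors, or equivalently relies on the resolvent constructed from the weighted estimate in the forthcoming subsections.

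The main technical point I expect to navigate is the simultaneous choice of multipliers: it is essential to use $4T^3\bar g$ (rather than the naive $\bar g$) for the elliptic equation, because this is precisely what makes the coupling cross-terms cancel into the absorbing square $|\phi-4T^3 g|^2$ \emph{and} what leaves the second-order contribution in the exact form controlled by the spectral assumption \ref{asA}, rather than an unrelated Poincar\'e-type object. A secondary care is the vanishing of the boundary terms at $x=\infty$, which follows from the characterization of the domain of $\mathcal{F}$ as a closed operator on $L^2$.
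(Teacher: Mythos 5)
Your proposal is correct and follows essentially the same route as the paper: multiply the two eigenequations by $4T^3\bar g$ and $\bar\phi$, combine the coupling terms into the nonpositive square $-\int\!\!\int(\phi-4T^3g)^2$, discard the nonpositive boundary flux from the streaming term, and control the diffusion term via the inequality of Lemma \ref{LemmaSA} furnished by the spectral assumption \ref{asA}. The only cosmetic difference is that you integrate the diffusion term by parts a second time to rewrite $\int\partial_x(4T^3)\,g\,\partial_x g$ as $-\tfrac12\int\partial_x^2(4T^3)\,g^2$ (equivalent given $g(0)=0$ and decay at infinity), and you are slightly more careful than the paper about complex eigenvalues and the non-point spectrum.
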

Before  giving the proof of  this lemma, we will show some inequalities related to the spectral assumption \ref{asA}. 

\begin{lemma}\label{lem.0}
    Assume  $T\in C^1_{\rm loc}(\mathbb{R}_+)$  satisfies the spectral assumption \ref{asA}. Then the following  inequality
    \begin{align}\label{eq:spM}
        M\int_0^\infty (2T^{\frac32})^2 |\partial_x g|^2 dx  \ge  4\int_0^\infty |\partial_x (2T^{\frac32})|^2 g^2 dx,
    \end{align}
holds for the same constant $M<1$ given in \ref{asA} and any function $g \in C_{\rm loc}^{1}(\mathbb{R}_{+})$ with $g(0)=0$.
\end{lemma}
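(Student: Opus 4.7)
The plan is to derive the unweighted inequality from the spectral assumption \ref{asA} by a well-chosen substitution that eliminates the exponential weight. Given $g\in C^{1}_{\rm loc}(\mathbb{R}_+)$ with $g(0)=0$, I would set
\[
f(x):=e^{-\beta_0 x}g(x).
\]
Since $g(0)=0$ we have $f(0)=0$, so $f$ is admissible in \ref{asA}. A direct computation gives $\partial_x f=e^{-\beta_0 x}(\partial_x g-\beta_0 g)$, hence $|\partial_x f|^{2}=e^{-2\beta_0 x}(\partial_x g-\beta_0 g)^{2}$ and $f^{2}=e^{-2\beta_0 x}g^{2}$. Plugging these into \ref{asA}, the weights $e^{2\beta_0 x}$ on both sides cancel exactly against the $e^{-2\beta_0 x}$ factors, yielding the \emph{unweighted} inequality
\[
M\int_{0}^{\infty}(2T^{3/2})^{2}(\partial_x g-\beta_0 g)^{2}\,dx \ \ge\ 4\int_{0}^{\infty}|\partial_x(2T^{3/2})|^{2}g^{2}\,dx.
\]

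To pass from this to the stated inequality, I would expand
\[
(\partial_x g-\beta_0 g)^{2}=(\partial_x g)^{2}-2\beta_0\,g\,\partial_x g+\beta_0^{2}g^{2}
\]
and handle the cross term by integration by parts. Writing $u:=2T^{3/2}$ and using $g(0)=0$ (with a standard compactly supported cutoff approximation to justify the boundary term at infinity),
\[
\int_{0}^{\infty}u^{2}g\,\partial_x g\,dx=\tfrac{1}{2}\int_{0}^{\infty}u^{2}\partial_x(g^{2})\,dx=-\int_{0}^{\infty}u\,(\partial_x u)\,g^{2}\,dx.
\]
Substituting and regrouping reduces matters to showing that the residual $\beta_0$-terms
\[
2M\beta_0\int_{0}^{\infty}u(\partial_x u)g^{2}\,dx\ +\ M\beta_0^{2}\int_{0}^{\infty}u^{2}g^{2}\,dx
\]
can be absorbed so that the remaining inequality reads $M\int u^{2}|\partial_x g|^{2}\,dx\ge 4\int(\partial_x u)^{2}g^{2}\,dx$.

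The main obstacle, and the technical core of the argument, is precisely this absorption. The sign of $\int u(\partial_x u)g^{2}dx$ is not fixed, and Cauchy--Schwarz/Young's inequality alone produces excess on the right-hand side of the form $M\beta_0\varepsilon\int(\partial_x u)^{2}g^{2}\,dx$ and $(M\beta_0/\varepsilon+M\beta_0^{2})\int u^{2}g^{2}\,dx$; the plan is to close the argument by exploiting the strict bound $0<M<1$ (which leaves room to absorb an $O(\beta_0)$ perturbation into the right-hand side), combined with the decay of $\partial_x T$ at infinity from Theorem \ref{thm:ex}, so that $\int u(\partial_x u)g^{2}dx$ becomes small and $M\beta_0^{2}\int u^{2}g^{2}\,dx$ can be controlled by the leading term. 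I expect the delicate step to be making this absorption quantitative without worsening the constant, which is why the lemma keeps the same $M$ as in \ref{asA}.
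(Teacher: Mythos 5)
Your substitution $f=e^{-\beta_0 x}g$ does not make progress: it is a bijection on the class of admissible functions, so the inequality you obtain,
\begin{align*}
M\int_0^\infty (2T^{3/2})^2(\partial_x g-\beta_0 g)^2\,dx \;\ge\; 4\int_0^\infty |\partial_x(2T^{3/2})|^2 g^2\,dx,
\end{align*}
is exactly equivalent to the spectral assumption \ref{asA}, merely rewritten. After expanding and integrating the cross term by parts (with $u=2T^{3/2}$), what you have proved is $A+C\ge B$ with $A=M\int u^2|\partial_x g|^2$, $B=4\int|\partial_x u|^2g^2$, and $C=2M\beta_0\int u\,\partial_x u\,g^2+M\beta_0^2\int u^2g^2$. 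To conclude $A\ge B$ you would need $C\le 0$, but the term $M\beta_0^2\int u^2g^2$ is nonnegative and the cross term has no sign; these extra terms sit on the \emph{helpful} side of the derived inequality, so they cannot be ``absorbed into the right-hand side.'' From $A+C\ge B$ with $C$ possibly large and positive one cannot deduce $A\ge B$, and neither the strictness $M<1$ nor the decay of $\partial_x T$ rescues this, since $\int u^2 g^2$ is not controlled by either side. This is a genuine gap, not a technicality to be made quantitative.

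The paper closes the argument by a different substitution that exploits positivity rather than cancellation of weights. One applies \ref{asA} to $f(x)=\int_0^x F(t)\,dt$ with $F\ge 0$, then rescales the \emph{derivative} by setting $G=e^{\beta_0 x}F$. The weight on the left-hand side cancels exactly, while on the right-hand side one gets $\bigl(\int_0^x e^{\beta_0(x-t)}G(t)\,dt\bigr)^2\ge\bigl(\int_0^x G(t)\,dt\bigr)^2$ because $e^{\beta_0(x-t)}\ge 1$ for $t\le x$ and $G\ge 0$; this is where the inequality is genuinely strengthened rather than reshuffled. The case of a general $g$ then follows by applying the result to $|\partial_x g|$ and using $\bigl(\int_0^x \partial_x g\bigr)^2\le\bigl(\int_0^x|\partial_x g|\bigr)^2$. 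If you want to repair your write-up, replace the multiplicative substitution on $f$ by this substitution on $\partial_x f$ and use the monotonicity of the kernel; that is the missing idea.
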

\begin{proof}
    Let $F\in C_{\rm loc}(\mathbb{R}_+)$ be any function defined on the half-line and $F(x)\ge 0$ for any $x\in\mathbb{R}_+$. Taking $f=\int_0^x F(t) dt$ in \eqref{eq:spassump2} leads to 
    \begin{align}
        M\int_0^\infty e^{2\beta x} (2T^{\frac32})^2 F^2(x) dx  \ge  4\int_0^\infty e^{2\beta x} |\partial_x (2T^{\frac32})|^2 \left(\int_0^x F(t) dt\right)^2 dx,
    \end{align}
    for  the same constant $M<1$ as in the spectral assumption \ref{asA}. Let $G(x) = e^{\beta x}F(x) \ge 0$, the above inequality implies
    \begin{align}
        M \int_0^\infty  (2T^{\frac32})^2 G^2(x)dx &\ge 4 \int_0^\infty  e^{2\beta x}|\partial_x(2T^{\frac32})|^2 \left(\int_0^x e^{-\beta t}G(t) dt \right)^2 dx \nonumber\\
        &= 4 \int_0^\infty  |\partial_x(2T^{\frac32})|^2 \left(\int_0^x e^{\beta (x-t)}G(t) dt \right)^2 dx \nonumber\\
        &\ge 4 \int_0^\infty  |\partial_x(2T^{\frac32})|^2 \left(\int_0^x G(t) dt \right)^2 dx,
    \end{align}
    since $e^{\beta(x-t)} \ge 1$ for any $t\le x$. Due to $\int_0^x |G(t)|dt \ge \int_0^x G(t)dt$, the above inequality also holds for all $G(x) \in C_{\rm loc}(\mathbb{R}_+)$. Let $g(x) = \int_0^x G(t) dt$, then \eqref{eq:spM} holds for any $g\in C^1_{\rm loc}(\mathbb{R}_+)$ satisfying $g(0)=0$.
\end{proof}

Finally we give some inequalities implied by the spectral assumption that are used in the proof of existence for system \ref{eq:l-1}-\eqref{eq:l-2}.
    \begin{lemma}\label{LemmaSA}
        Assume the spectral assumption \ref{asA} is fulfilled, then 
\begin{align}\label{sp1}
            \int_0^\infty 4T^3|\partial_x f|^2 dx + \int_0^\infty \partial_x (4T^3) f\partial_x f dx \ge  0
        \end{align}
        and
        % In particular, we have the following weighted spectral assumption.  
        \begin{align} \label{sp2}
            \int_0^\infty e^{2\beta x} 4T^3|\partial_x f|^2 dx + \int_0^\infty e^{2\beta x} \partial_x (4T^3) f\partial_x f dx \ge 0
        \end{align}
       hold for any function $f\in C^1$ such that  $f(0)=0$ and for $\beta \in [0,\beta_0]$ where $\beta_{0}$ is given in Lemma \ref{lem.spcond}.
    \end{lemma}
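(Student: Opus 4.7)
The plan is to prove the unweighted inequality \eqref{sp1} first, as a direct consequence of Lemma \ref{lem.0} combined with Young's inequality, and then derive the weighted version \eqref{sp2} through the substitution $g(x)=e^{\beta x}f(x)$, which converts the $e^{2\beta x}$ weight into a manifestly nonnegative $\beta^2$-term plus a boundary contribution.

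For \eqref{sp1} I start from the factorization $\partial_x(4T^3)=2\cdot(2T^{3/2})\cdot\partial_x(2T^{3/2})$, which lets the cross-term be written as $\partial_x(4T^3)\,f\,\partial_x f = 2\bigl((2T^{3/2})\partial_x f\bigr)\bigl(\partial_x(2T^{3/2})\,f\bigr)$. Applying Young's inequality with a parameter $\epsilon\in(0,1)$,
\begin{equation*}
\partial_x(4T^3)\,f\,\partial_x f \ge -\epsilon(2T^{3/2})^2|\partial_x f|^2 -\tfrac{1}{\epsilon}|\partial_x(2T^{3/2})|^2 f^2,
\end{equation*}
and inserting this into the left-hand side of \eqref{sp1} yields the lower bound $(1-\epsilon)\int_0^\infty 4T^3|\partial_x f|^2 dx -\tfrac{1}{\epsilon}\int_0^\infty|\partial_x(2T^{3/2})|^2 f^2 dx$. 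Since Lemma \ref{lem.0} gives $\int_0^\infty|\partial_x(2T^{3/2})|^2 f^2 dx\le \tfrac{M}{4}\int_0^\infty 4T^3|\partial_x f|^2 dx$, choosing $\epsilon=\tfrac12$ leaves the coefficient $\tfrac12-\tfrac{M}{2}=\tfrac{1-M}{2}>0$ in front of $\int_0^\infty 4T^3|\partial_x f|^2 dx$, since $M<1$; this proves \eqref{sp1}.

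For \eqref{sp2} I set $g:=e^{\beta x}f$, so that $g\in C^1(\mathbb{R}_+)$ and $g(0)=0$. Using $\partial_x f=e^{-\beta x}(\partial_x g-\beta g)$, a direct expansion of the integrand rewrites the left-hand side of \eqref{sp2} as
\begin{equation*}
\int_0^\infty 4T^3|\partial_x g|^2 dx +\int_0^\infty \partial_x(4T^3)\,g\,\partial_x g\, dx +\beta^2\int_0^\infty 4T^3 g^2 dx -\beta\bigl[4T^3 g^2\bigr]_0^\infty,
\end{equation*}
where I used the identity $8T^3g\partial_x g+\partial_x(4T^3)g^2=\partial_x(4T^3 g^2)$ to collapse the two remaining linear-in-$\beta$ cross-terms into a total derivative. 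The first two integrals form exactly the left-hand side of \eqref{sp1} evaluated at $g$ and are therefore nonnegative by the preceding step; the third term is manifestly $\ge 0$; and the boundary evaluation at $x=0$ vanishes because $g(0)=0$. Thus \eqref{sp2} follows provided $4T^3(x)g^2(x)=4T^3(x)e^{2\beta x}f^2(x)\to 0$ as $x\to\infty$.

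The main technical obstacle is exactly this vanishing of the boundary term at infinity, which is not automatic for an arbitrary $f\in C^1(\mathbb{R}_+)$ with $f(0)=0$. To treat the general statement I would either restrict attention to functions for which both sides of \eqref{sp2} are finite and the required decay holds (the situation in every application of Lemma \ref{LemmaSA} in the sequel, where $f$ is built from solutions of \eqref{eq:l-1/i}--\eqref{eq:l-2/i} that decay exponentially), or first establish the inequality on a truncated interval $[0,R]$ with a smooth cut-off and then pass to the limit $R\to\infty$, noting that $T$ is bounded and $M<1$ leaves enough margin in the Young estimate to absorb the cut-off errors.
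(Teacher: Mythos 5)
Your proof of \eqref{sp1} is correct and is in substance identical to the paper's: the factorization $\partial_x(4T^3)f\partial_x f=2\bigl(2T^{3/2}\partial_x f\bigr)\bigl(\partial_x(2T^{3/2})f\bigr)$ followed by Young's inequality with $\epsilon=\tfrac12$ reproduces exactly the paper's bound $|\int_0^\infty\partial_x(4T^3)f\partial_x f\,dx|\le\tfrac12\int_0^\infty 36T|\partial_x T|^2f^2dx+\tfrac12\int_0^\infty 4T^3|\partial_x f|^2dx$ (since $36T|\partial_x T|^2=4|\partial_x(2T^{3/2})|^2$), and the conclusion then follows from Lemma \ref{lem.0} with $M<1$.

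For \eqref{sp2}, however, you diverge from the paper, and your route runs into a difficulty that the paper's route does not have. The paper simply repeats the Young's-inequality argument with the weight $e^{2\beta x}$ carried along pointwise, invoking the \emph{weighted} form of the spectral assumption \ref{asA}, namely $\int_0^\infty e^{2\beta x}4T^3|\partial_x f|^2dx\ge\int_0^\infty e^{2\beta x}36T|\partial_x T|^2f^2dx$; no integration by parts occurs, so no boundary term ever appears. Your substitution $g=e^{\beta x}f$ instead reduces \eqref{sp2} to \eqref{sp1} applied to $g$ plus $\beta^2\int_0^\infty 4T^3g^2dx$ minus the boundary term $\beta\,4T^3(x)e^{2\beta x}f^2(x)\big|_{x\to\infty}$, and this last term is genuinely problematic: for an admissible $f$ that merely tends to a nonzero constant (which the class ``$f\in C^1$, $f(0)=0$'' allows, and which is close to what happens for the actual solutions, since $g\to g_\infty$), one has $e^{2\beta x}f^2\to\infty$, the decomposition degenerates to $\infty-\infty$, and neither of your proposed repairs works as stated — the truncation to $[0,R]$ does not remove the term $-\beta\,4T^3(R)e^{2\beta R}f^2(R)$, and there is no margin from $M<1$ to absorb it because it is not small relative to the other terms. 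The fix is not to restrict the class of $f$ but to abandon the substitution: apply your $\epsilon=\tfrac12$ Young step directly to $e^{2\beta x}\partial_x(4T^3)f\partial_x f$ and use the weighted inequality $\int_0^\infty e^{2\beta x}4T^3|\partial_x f|^2dx\ge 4\int_0^\infty e^{2\beta x}|\partial_x(2T^{3/2})|^2f^2dx$, which is what assumption \ref{asA} provides for $\beta\in[0,\beta_0]$. With that replacement your argument closes.
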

    %%================================================================
    %%=============================================================== 
    \begin{proof}
    The spectral assumption \ref{asA} implies  
    \begin{align}
        \int_0^\infty 4T^3 |\partial_x f|^2 dx \ge \int_0^\infty 36 T|\partial_x T|^2 f^2 dx.
    \end{align}
    By Young's inequality, we deduce that 
        \begin{align*}
            \left|\int_0^\infty \partial_x (4T^3) f \partial_x f dx\right| &\le \frac12 \int_0^\infty \frac{1}{4T^3} |\partial_x(4T^3)|^2 f^2 dx + \frac12 \int_0^\infty 4T^3 |\partial_x f|^2 dx \nonumber\\
            & = \frac12 \int_0^\infty 36 T |\partial_x T|^2 f^2 dx + \frac12 \int_0^\infty 4T^3 |\partial_x f|^2 dx.
        \end{align*}
        Combining this with the previous inequality, we obtain
        \begin{align*}
            &\int_0^\infty 4T^3|\partial_x f|^2 dx + \int_0^\infty \partial_x (4T^3) f\partial_x f dx \nonumber\\
            &\quad \ge  \int_0^\infty 4T^3|\partial_x f|^2 dx - \frac12 \int_0^\infty 36 T |\partial_x T|^2 f^2 dx - \frac12 \int_0^\infty 4T^3 |\partial_x f|^2 dx \nonumber\\
            &\quad = \frac12  \int_0^\infty 4T^3|\partial_x f|^2 dx -  \frac12 \int_0^\infty 36 T |\partial_x T|^2 f^2 dx \ge 0,
        \end{align*}
        and thus \eqref{sp1} holds.
        
        Similarly, the spectral assumption \ref{asA} also implies for $\beta \in [0,\beta_0]$,
        \begin{align}
            \int_0^\infty e^{2\beta x} 4T^3 |\partial_x f|^2 dx \ge \int_0^\infty e^{2\beta x} 36 T|\partial_x T|^2 f^2 dx.
        \end{align}
        This implies 
        \begin{align}
            &\int_0^\infty e^{2\beta x} 4T^3|\partial_x f|^2 dx + \int_0^\infty e^{2\beta x} \partial_x (4T^3) f\partial_x f dx\nonumber\\
            &\quad \ge \int_0^\infty e^{2\beta x} 4T^3|\partial_x f|^2 dx - \frac12 \int_0^\infty e^{2\beta x} 4T^3|\partial_x f|^2 dx - \frac12 \int_0^\infty e^{2\beta x} 36 T|\partial_x T|^2 f^2 dx \nonumber\\
            &\quad = \frac12  \int_0^\infty e^{2\beta x} 4T^3|\partial_x f|^2 dx -  \frac12 \int_0^\infty e^{2\beta x} 36 T|\partial_x T|^2 f^2 dx \ge 0,
        \end{align}
        which is \eqref{sp2} and the proof is finished.
       \end{proof}

We next give the proof of Lemma \ref{lem:ls}.
       \begin{proof}[Proof of Lemma \ref{lem:ls}]
Suppose $\lambda$ is an eigenvalue of $\mathcal{F}$ associated to the  eigenvector $(g,\phi)$. Then 
        \begin{align}
            \lambda \left(\begin{array}[]{cc}
                g \\\phi
            \end{array}\right) = \mathcal{F}(g,\phi) = \left(\begin{array}[]{cc}
                \partial_x^2 g + \langle \phi - 4T^3 g \rangle \\
               - \mu \partial_x \phi -( \phi-4T^3 g)
            \end{array}\right).
        \end{align}
        We multiply the above equation by $(4T^3 g,\phi)$ and integrate over $\mathbb{R}_+\times [-1,1]$ to get 
        \begin{align}\label{eq:ls/1}
            &\lambda \int_{0}^\infty 4T^3 g^2 dx -+\lambda \int_0^\infty \phi^2 dx \nonumber\\
            &\quad = \int_0^\infty 4T^3 g\partial_x^2 g dx - \int_0^\infty \int_{-1} \mu \partial_x \frac{\phi^2}{2} d\mu dx - \int_0^\infty \int_{-1}^1 (\phi-4T^3 g)^2 d\mu dx.
        \end{align}
        The spectral assumption implies 
        \begin{align}
            \int_0^\infty 4T^3 g\partial_x^2 g dx =- \int_0^\infty 4T^3 |\partial_x g|^2 dx - \int_0^\infty \partial_x(4T^3) g \partial_x g dx \le 0.
        \end{align}
        The absorbing boundary condition implies 
        \begin{align}
            \int_0^\infty \int_{-1} \mu \partial_x \frac{\phi^2}{2} d\mu dx = 0.
        \end{align}
        Taking the above two inequalities into \eqref{eq:ls/1}, we obtain 
        \begin{align}
            &\lambda \int_{0}^\infty 4T^3 g^2 dx + \lambda \int_0^\infty \phi^2 dx \le \int_0^\infty \int_{-1}^1 (\phi-4T^3 g)^2 d\mu dx \le 0.
        \end{align}
        Hence if $(g,\phi)\neq (0,0)$, then $\lambda\le 0$ and finishes the proof of Lemma \ref{lem:ls}.
    \end{proof}
 Assuming $S_1$, $S_2$ are two given functions decaying exponentially to zero, by Lemma \ref{lem:ls}, there exists a unique solution to the following problem 
    \begin{align}
        &\eps(g_\eps^1,\phi_\eps^1)^T + \mathcal{F}(g_\eps^1,\phi_\eps^1)=(-S_1,S_2)^T, \\
        &g_\eps^1(0) =0 ,\quad \phi_\eps^1(0,\mu)=0,\quad \text{for any }\mu>0.
    \end{align}
    By a superposition argument, there also exists a unique solution to 
    \begin{align}
        &\eps(g_\eps^2,\phi_\eps^2)^T + \mathcal{F}(g_\eps^2,\phi_\eps^2)=0, \\
        &g_\eps^2(0) = g_b,\quad \phi_\eps^2(0,\mu)=\phi_b,\quad \text{for any }\mu>0.
    \end{align}
    Then $g_\eps = g_{\eps}^1 + g_\eps^2$, $\phi_\eps:=\phi_\eps^1 + \phi_\eps^2$ solves 
    \begin{align}
     &   -\eps g_\eps + \partial_x^2 g_\eps + \langle \phi_\eps - 4T^3 g_\eps \rangle = S_1,\\
      &  \eps \phi_\eps + \mu\partial_x\phi_\eps + \phi_\eps - 4T^3 g_\eps = S_2,\\
    &g_\eps(0) =0 ,\quad \phi_\eps(0,\mu)=0,\quad \text{for any }\mu>0.
    \end{align}
    Formally, the above system converges to \eqref{eq:m1}-\eqref{eq:bd2}. However, we need to derive a uniform estimate on the solutions to the above problem. This can done for the Milne problem of linear transport equation using a Maximum principle (\cite{bardos1984diffusion}). However, due to $T$ not being a constant function, we cannot use the Maximum principle to show the uniform boundness of $(g_\eps,\phi_\eps)$. Here we overcome this problem by first showing existence of \eqref{eq:m1}-\eqref{eq:bd2} in a bounded interval and extend the solutions to half-space using uniform estimates.

     %%%%%%%%%%%%%%%%%%%%%%%%%%%%%%%%%%%%%%%%%%%%%%%%%%%%%%%%%%%%%%%%%%%%
    %%%%%%%%%%%%%%%%%%%%%%%%%%%%%%%%%%%%%%%%%%%%%%%%%%%%%%%%%%%%%%%%%%%%
    \subsection{Existence on the bounded interval}\label{LMB2}
     %%%%%%%%%%%%%%%%%%%%%%%%%%%%%%%%%%%%%%%%%%%%%%%%%%%%%%%%%%%%%%%%%%%%
    %%%%%%%%%%%%%%%%%%%%%%%%%%%%%%%%%%%%%%%%%%%%%%%%%%%%%%%%%%%%%%%%%%%%
    We now consider system \eqref{eq:l-1}-\eqref{eq:l-2} on the bounded interval $[0,B]$:
    \begin{align}
        \partial_x^2 g^B + \langle \phi^B - 4T^3 g^B \rangle = \langle S_1 \rangle, \label{eq:BBl1}\\
        \mu \partial_x \phi^B + (\phi^B - 4T^3 g^B) = S_1,\label{eq:BBl2}
    \end{align}
    with boundary conditions 
    \begin{align}
        &g^B(0) = 0,\quad \partial_x g^B(B)=0,\label{eq:B/l1}\\
        &\phi^B(0,\mu) = \phi_b(\mu),\quad \phi^B(B,\mu) = \phi^B(B,-\mu),\text{ for any }\mu>0,\label{eq:B/l2}
    \end{align}
    where $T^B$ is the solution to the system \eqref{eq:B1}-\eqref{eq:B2}. 

    Define the weighted space   
    \begin{align*}
        % L^2_{16T^6}([0,B]):=\left\{f: \int_0^B 16T^6 f^2 dx < \infty \right\}.
        L^2_{m(x)}([0,B]):=\left\{f: \int_0^B m(x) f^2 dx < \infty \right\}
    \end{align*}
    where $m(x)>0$ is a given function.
    We prove the following existence lemma.
    \begin{lemma}\label{lm:exbd}
        Let $S_1=S_1(x)$ be a continuous function on $[0,B]$. Then there exists a unique solution $(g^B,\phi^B) \in C^1([0,B])\times C([0,B]\times [-1,1])$ to  system \eqref{eq:BBl1}-\eqref{eq:BBl2} with boundary conditions \eqref{eq:B/l1}-\eqref{eq:B/l2}, and the solution satisfies 
        % Assume $S_1$, $S_2$ decays to zero. 
        % Assume $S_1, S_2 \in L^2_{e^{2\beta x}}([0,B])\times L_{e^{2\beta x}}^2([0,B]\times [-1,1])$ for some constant $0 < \beta < 1/2$. Assume $T^B$ satisfies the spectral assumption \ref{asA} ($T^B$ extends to $\mathbb{R}_+$ by taking $T^B(x)=0$ for $x>B$).
        % Then there exists a unique weak solution $(g^B,\phi^B) \in C^1([0,B])\times C([0,B]\times [-1,1])$
        % % L^2_{16(T^B)^6}([0,B])\times L^2([0,B]\times [-1,1])$
        % %  unique solution $(g^B,\phi^B) \in C^2([0,B])\times C^1([0,B]\times [-1,1])$
        % % L_{16T^6}^2([0,B])\times L^2([0,B]\times [-1,1])$ 
        % to system \eqref{eq:BBl1}-\eqref{eq:BBl2} with boundary conditions \eqref{eq:B/l1}-\eqref{eq:B/l2}. Morerover, the solution of  the equation satisfies the weighted estimate
        % \begin{align}\label{eq:el-2}
        %     &\frac12 \int_0^B \int_{-1}^1 (\phi^B - 4(T^B)^3 g^B)^2 d\mu dx + \frac12 \int_{-1}^0 |\mu| (\phi^B)^2(0,\cdot) d\mu \nonumber\\
        %     &\quad \le  \frac12 \int_0^1 \mu \phi_b^2 d\mu + \frac12 \int_0^B S_1^2 dx + \frac14 \int_0^B \int_{-1}^1 \left(S_2 - \frac12 \langle S_2\rangle\right)^2 d\mu dx.
        %        % \frac12 \int_0^B \int_{-1}^1 (\phi^B - 4(T^B)^3 g^B)^2 d\mu dx \le \frac12 \int_0^B S_1^2 dx + \frac14 \int_0^B \int_{-1}^1 \left(S_2 - \frac12 \langle S_2\rangle\right)^2 d\mu dx.
        %            \end{align}
% Moreover, we have the following weighted estimate 
    \begin{align}\label{eq:est-wB}
        &\int_0^B \int_{-1}^1 e^{2\beta x}(\phi^B - 4(T^B)^3 g^B)^2 d\mu dx +\frac{1}{1-\beta} \int_{-1}^0 |\mu| (\phi^B)^2(0,\cdot) d\mu \nonumber\\
        &\qquad \le  \frac{1}{1-\beta} \int_0^1 \mu \phi_b^2 d\mu + \frac{2}{(1-\beta)^2} \int_0^B e^{2\beta x} |S_1|^2 dx.
    \end{align}
% where $\beta \in (0,\beta_0)$, for some $\beta_{0}>0$.
        \end{lemma}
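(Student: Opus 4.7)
The proof has three parts: existence via fixed-point iteration, the weighted estimate via a multiplier method, and uniqueness via the estimate. For existence, the plan is to apply the Banach fixed-point theorem. Given an iterate $\bar\phi$, I would first solve the coercive second-order ODE
\begin{equation*}
\partial_x^2 g - 8(T^B)^3 g = 2S_1 - \langle \bar\phi\rangle,\qquad g(0)=0,\quad \partial_x g(B)=0,
\end{equation*}
for $g\in C^2([0,B])$ by standard elliptic theory (the zeroth-order coefficient $8(T^B)^3$ is nonnegative, so an explicit Green's function is available). Then solve the transport equation $\mu\partial_x\phi + \phi = 4(T^B)^3 g + S_1$ subject to $\phi(0,\mu)=\phi_b(\mu)$ for $\mu>0$ and the reflection condition $\phi(B,\mu)=\phi(B,-\mu)$ for $\mu>0$ by the method of characteristics, with the reflection at $x=B$ closing the loop between $\mu>0$ and $\mu<0$. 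Setting $\mathcal{T}\bar\phi := \phi$, the sought solution is the fixed point of $\mathcal{T}$; contraction is verified by estimating $\phi_1-\phi_2$ in terms of $g_1-g_2$ from the explicit transport formulas and $g_1-g_2$ in terms of $\bar\phi_1-\bar\phi_2$ from elliptic continuity, then composing these in a weighted $L^2$ norm with a sufficiently strong exponential weight.

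For the weighted estimate, I would multiply \eqref{eq:BBl2} by $e^{2\beta x}\Phi^B$ with $\Phi^B := \phi^B - 4(T^B)^3 g^B$ and integrate over $[0,B]\times[-1,1]$. Writing $\phi^B = \Phi^B + 4(T^B)^3 g^B$ and using the key identity $\partial_x g^B = \langle\mu\phi^B\rangle = \langle\mu\Phi^B\rangle$ (which is obtained by subtracting \eqref{eq:BBl1} from the $\mu$-integral of \eqref{eq:BBl2} and invoking $\partial_x g^B(B)=0$ together with $\langle\mu\phi^B(B,\cdot)\rangle=0$ from reflection, exactly as in \eqref{lm4.=}), the identity becomes
\begin{align*}
\frac{1}{2}\int_0^B\!\!\int_{-1}^1 e^{2\beta x}\mu\partial_x (\Phi^B)^2\,d\mu\,dx &+ \int_0^B e^{2\beta x}\partial_x(4(T^B)^3 g^B)\,\partial_x g^B\,dx \\
&\quad + \int_0^B\!\!\int_{-1}^1 e^{2\beta x}(\Phi^B)^2\,d\mu\,dx = \int_0^B\!\!\int_{-1}^1 e^{2\beta x}\Phi^B S_1\,d\mu\,dx.
\end{align*}
Integration by parts in the first term produces $-\beta\int\!\!\int e^{2\beta x}\mu(\Phi^B)^2\,d\mu\,dx$ plus boundary contributions; the contribution at $x=B$ vanishes since the reflection BC gives $\Phi^B(B,\mu)=\Phi^B(B,-\mu)$, while the one at $x=0$ produces $-\tfrac{1}{2}\int_0^1\mu\phi_b^2\,d\mu + \tfrac{1}{2}\int_{-1}^0|\mu|\phi^B(0,\mu)^2\,d\mu$. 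The second term equals $\int e^{2\beta x}4(T^B)^3(\partial_x g^B)^2\,dx + \int e^{2\beta x}\partial_x(4(T^B)^3)g^B\partial_x g^B\,dx \ge 0$ by Lemma \ref{LemmaSA}. Using $-\beta\mu\ge -\beta$ and Young's inequality $\int\!\!\int e^{2\beta x}\Phi^B S_1 \le \tfrac{1-\beta}{2}\int\!\!\int e^{2\beta x}(\Phi^B)^2 + \tfrac{1}{1-\beta}\int e^{2\beta x}S_1^2$, then multiplying through by $2/(1-\beta)$, yields exactly \eqref{eq:est-wB}.

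For uniqueness, the difference of two solutions satisfies the homogeneous system with $S_1=0$ and zero boundary data, so \eqref{eq:est-wB} forces $\Phi^B\equiv 0$ and $\phi^B(0,\mu)=0$ for $\mu<0$, hence $\phi^B(0,\cdot)\equiv 0$; combined with $\partial_x g^B = \langle\mu\Phi^B\rangle = 0$ and $g^B(0)=0$, we get $g^B\equiv 0$ and $\phi^B\equiv 0$. The main obstacle I foresee is verifying the contraction estimate for $\mathcal{T}$, since the coupling coefficient $4(T^B)^3$ is generically not small; a suitable exponentially weighted norm or an iterated power $\mathcal{T}^n$ should be required. A secondary technical point is that Lemma \ref{LemmaSA} is stated on the half-line for a $T$ satisfying the spectral assumption; to apply it on $[0,B]$ with $T^B$ in place of $T$, one either extends $g^B$ to $[0,\infty)$ by $g^B\equiv g^B(B)$ for $x>B$ (which preserves $g^B(0)=0$) and invokes the inequality for the half-line limit $T$ together with the uniform bound $\|T^B\|\le\gamma$, or reruns the Young's-inequality argument of Lemma \ref{LemmaSA} directly on $[0,B]$, using only the boundary condition $g^B(0)=0$.
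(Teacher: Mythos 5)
Your weighted-estimate and uniqueness arguments are sound. Multiplying \eqref{eq:BBl2} by $e^{2\beta x}\Phi^B$ with $\Phi^B=\phi^B-4(T^B)^3g^B$, using $\partial_x g^B=\langle\mu\phi^B\rangle=\langle\mu\Phi^B\rangle$ and inequality \eqref{sp2}, and then applying Young's inequality with weight $(1-\beta)/2$ reproduces \eqref{eq:est-wB} exactly; this is a mild repackaging of the paper's computation (which multiplies \eqref{eq:BBl1} by $e^{2\beta x}4(T^B)^3g^B$ and \eqref{eq:BBl2} by $e^{2\beta x}\phi^B$ separately and adds), and your energy-based uniqueness is a legitimate alternative to the paper's appeal to the Banach fixed point. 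Your observation about transferring Lemma \ref{LemmaSA} from the half-line to $[0,B]$ is also a real issue, and rerunning the Young's-inequality argument on $[0,B]$ using only $g^B(0)=0$ is the right fix.

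The genuine gap is the contraction for $\mathcal{T}$, which you correctly flag as the main obstacle but do not close, and neither of your proposed remedies works as stated. An exponentially weighted norm (the Picard/Volterra trick) fails here because the problem is not of Volterra type: the $\mu<0$ characteristics propagate backward from $x=B$ through the reflection condition, and the elliptic solve is nonlocal in both directions, so no choice of one-sided weight makes the loop integral small. Iterating $\mathcal{T}^n$ does not help either, because the naive loop gain is borderline: heuristically $g\approx(8(T^B)^3)^{-1}\langle\bar\phi\rangle$ so $4(T^B)^3g\approx\tfrac12\langle\bar\phi\rangle$, and the transport kernel $\int_0^1\mu^{-1}e^{-|x-s|/\mu}d\mu\,ds$ has total mass approaching $2$, so crude operator-norm composition gives a constant of order $1$, not strictly below $1$. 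The paper closes this by a specific symmetrization of the bilinear form $\int\!\!\int 16T^3(s)h_{k-1}(s)T^3(x)h_k(x)\,\mu^{-1}e^{-|x-s|/\mu}$ (completing the square inside the kernel, see \eqref{eq:h/3}), which shows that the kernel's effective mass is $\int_0^1\bigl(2-e^{-x/\mu}-e^{-(2B-x)/\mu}\bigr)d\mu<2$; combined with the coercive term $32T^6h_k^2$ from the elliptic equation and the sign information from \eqref{sp1}, this yields the contraction factor $\delta=\int_0^1(1-e^{-B/\mu})d\mu\big/\int_0^1(1+e^{-B/\mu})d\mu<1$ in the weighted space $L^2_{16T^6}([0,B])$. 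Without this (or an equivalent) quantitative kernel estimate, the existence half of your proof does not go through.
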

        
    \begin{proof}
        The proof is divided into two steps. In the first step, we prove existence. We then derive the uniform estimate in the second step.
        
        \emph{Step 1:  Existence.}
        For a given $g^B$, the solution to \eqref{eq:BBl2}, according to \eqref{eq:sol/mu+}-\eqref{eq:sol/mu-}, can be expressed as 
    \begin{align}\label{eq:phiexact}
        \phi^B = \left\{\begin{array}{lc}
             \phi_b(\mu) e^{-\frac{x}{\mu}} + \int_0^x \frac{1}{\mu} e^{-\frac{x-s}{\mu}} 4(T^B)^3(s) g^B(s) ds , &\text{for }\mu>0,\\
             \phi_b(-\mu) e^{\frac{2B-x}{\mu}} - \int_0^B \frac{1}{\mu} e^{\frac{2B-x-s}{\mu}} 4(T^B)^3(s) g^B(s)  dx \\
             \qquad- \int_x^B\frac{1}{\mu} e^{-\frac{x-s}{\mu}} 4(T^B)^3(s) g^B(s)ds, &\text{for }\mu<0.
        \end{array}\right.
    \end{align}
    We denote the above relation by the mapping $\phi^B ={\Phi}_{\phi_b}(g^B).$ Then we have 
    \begin{align}
        \langle \phi^B(x,\cdot) \rangle 
        & = \int_0^1 \phi^B(x,\mu) d\mu + \int_{-1}^0 \phi^B(x,\mu) d\mu \nonumber\\
        & =\int_0^1 \phi_b(\mu) e^{-\frac{x}{\mu}} d\mu + \int_0^1 \phi_b(\mu) e^{-\frac{2B-x}{\mu}} d\mu \nonumber\\
        &\quad+ \int_0^B \int_0^1 \frac{1}{\mu} e^{-\frac{|x-s|}{\mu}} 4 (T^B)^3(s)g^B(s) d\mu  ds \nonumber\\&
        \quad +\int_0^B \int_{0}^1 \frac{1}{\mu} e^{-\frac{2B-x-s}{\mu}} 4 (T^B)^3(s)g^B(s) d\mu ds .\label{eq:<phiB>}
    \end{align}
    System \eqref{eq:BBl1}-\eqref{eq:BBl2} is equivalent to the following equation
    \begin{align}\label{eq:gmathcalF}
        -\partial_x^2 g^B + 8T^3 g^B - \langle {\Phi}_{\phi_b}(g^B) \rangle = 2S_1,
    \end{align}
    with 
    \begin{align*}
        g^B (0)=0,\quad \partial_x g^B (B)=0.
    \end{align*}
 To prove the existence for the above equation, we construct a sequence $\{g_k\}_{k=0}^\infty$ with $g^0(x)=g_b$ and $g^k$ is solved via 
    \begin{align}\label{eq:itereq}
        -\partial_x^2 g_k + 8T^3 g_k - \langle {\Phi}_{\phi_b}(g_{k-1})\rangle = 2S_1,
    \end{align}
    with 
    \begin{align*}
        g_k(0)=0, \quad \partial_x g_k(B)=0.
    \end{align*}
    Denote the above mapping by $\mathcal{G}$ with $g_k = \mathcal{G} g_{k-1} $. Then according to \cite[Chpater 2.2.3]{kato2013perturbation}, $\mathcal{G}$ is invertible on the space $C([0,B])$ and thus the above equation has a solution in $g_k \in C([0,B])$. Moreover, by \eqref{eq:itereq}, $g_k \in C^2([0,B])$. Next we show $\mathcal{G}$ is a contraction mapping in the weighted space $L^2_{(16T^6)}([0,B])$. 
    
    We first show $\mathcal{G}$ maps the space $L^2_{(16(T^B)^6)}([0,B])$ onto itself. We multiply \eqref{eq:itereq} by $4(T^B)^3 g_k$ and integrate over $[0,B]$ to get 
    \begin{align}\label{eq:est0B}
       - \int_0^B 4 T^3 g_k \partial_x^2 g_k dx + \int_0^B 32 T^6 g_k^2 dx  = \int_0^B 4 T^3 g_k \langle \Phi_{\phi_b}(g_{k-1})\rangle  dx.
    \end{align}
    Since $T^B$ satisfies the spectral assumption \ref{asA}, by Lemma \ref{LemmaSA}, inequality \eqref{sp1} holds, which implies
    % The spectral assumption \eqref{sp2}, implies 
    \begin{align}\label{eq:A>0}
       - \int_0^B 4 T^3 g_k \partial_x^2 g_k dx = \int_0^B 4T^3 |\partial_x g_k |^2 dx +  \int_0^B \partial_x(4 T^3) g_k \partial_x g_k dx \ge 0.
    \end{align}
    Then, the Young's inequality implies 
    \begin{align}\label{eq:Fin}
        \int_0^B 4 T^3 g_k \langle \Phi_{\phi_b}(g_{k-1})\rangle dx \le \int_0^B 16 T^6 g_k^2 dx + \frac{1}{4} \int_0^B  |\langle\Phi_{\phi_b}(g_{k-1})\rangle|^2 dx.
    \end{align}
    Using the formula \eqref{eq:<phiB>} and the Young's convolution inequality  we obtain
    \begin{align*}
        \left\|\int_0^x f(s) \int_0^1 \frac{1}{\mu} e^{-\frac{x-s}{\mu}} d\mu  ds \right\|_{L^2([0,B])}^2 &\le \left\| \int_0^1 \frac{1}{\mu} e^{-\frac{x}{\mu}} d\mu \right\|_{L^1([0,B])}^2 \|f\|_{L^2([0,B])}^2 \nonumber\\
        &\le C(B) \|f\|_{L^2([0,B])}^2, %\quad \forall f\in L^2([0,B])
    \end{align*}
    and 
    \begin{align*}
        \left\|\int_x^B f(s) \int_0^1 \frac{1}{\mu} e^{\frac{x-s}{\mu}} d\mu  ds \right\|_{L^2([0,B])}^2 &\le  \left\| \int_0^1 \frac{1}{\mu} e^{\frac{x}{\mu}} d\mu \right\|_{L^1([0,B])}^2 \|f\|_{L^2([0,B])} \nonumber\\
        &\le C(B) \|f\|_{L^2([0,B])}, %\quad \forall f\in L^2([0,B]).
    \end{align*}
    where $C(B)>0$ is a constant depending only on $B$.
    Similarly,
    \begin{align}
        \left\|\int_0^B \int_0^1 \frac{1}{\mu} e^{-\frac{2B-x-s}{\mu}} f(s,\mu) d\mu \right\|_{L^2([0,B])} \le C(B) \|f\|_{L^2([0,B]\times [-1,1])}
    \end{align}
    By the above inequalities and \eqref{eq:<phiB>}, we deduce 
    \begin{align}
        &\|\langle |\Phi_{\phi_b}(g_{k-1})\rangle\|_{L^2([0,B])}^2   \le  C(B,\phi_b) + C(B) (\|4T^3 g_{k-1} \|_{L^2([0,B])}^2. \nonumber
    \end{align}
    Taking this into \eqref{eq:Fin} leads to 
    \begin{align}
        &\int_0^B 4 T^3 g_k \langle \Phi_{\phi_b}(g_{k-1})\rangle dx \le \int_0^B 16 (T^B)^6 g_k^2 dx  + C(B) \int_0^B 16 T^6 g_{k-1}^2 dx.
    \end{align}
    Combining the above inequality with \eqref{eq:A>0}, \eqref{eq:est0B} implies 
    \begin{align}
        \int_0^B 16 T^6 g_k^2 dx \le   C(B) \int_0^B 16 T^6 g_{k-1}^2 dx.
    \end{align}
    Hence $\mathcal{G}$ maps from $L^2_{16T^6}([0,B])$ onto itself.

We next show that  $\mathcal{G}$ is a contraction mapping.
    % Then $\mathcal{G}$ maps from $C([0,B])$ to $C^2([0,B])$. Moreover, $\mathcal{G}$ also maps the equation in $L^2_{16T^3}([0,B])$ to $L^2_{16T^3}([0,B])$, where the weighted $L^2$ space $L^2_{16T^6}$ is defined by 
    % \begin{align*}
    %     L^2_{16T^6}([0,B]):=\left\{f: \int_0^B 16T^3 f^2 dx < \infty \right\}.
    % \end{align*}
    % Due to $T(x)>0$ for $x\in (0,B)$, the above space is a Banach space. Suppose $g_{k-1} \in L^2_{16T^6}([0,B])$, then $4T^3 g_{k-1} \in L^2([0,B])$ and we get from \eqref{eq:phiexact} $\langle \mathcal{F}_{\phi_b} (g_{k-1}) \rangle \in L^2([0,B])$. Hence from the elliptic theory for equation \eqref{eq:itereq}, the solution $g_k = \mathcal{G} g_{k-1} \in C^2([0,B])$ and so $g_k \in L^2_{16T^3}([0,B])$.
    % The existence of the above equation follows from classical boundary problem.
    Let $h_k = g_k-g_{k-1}$, we have 
    \begin{align}\label{eq:hk}
        \partial_x^2 h_k = 8T^3 h_k - \langle \Phi_0(h_{k-1}) \rangle,
    \end{align}
    with 
    \begin{align*}
        h_k = 0, \quad \partial_x h_k(B) = 0.
    \end{align*}
    We multiply \eqref{eq:hk} by $4(T^B)^3 h_k$ and integrate over $[0,B]$ to get 
    \begin{align}\label{eq:h/1}
        -\int_0^B 4T^3 h_k \partial_x^2 h_kdz  + \int_0^B 32T^6 h_k^2 dx - \int_0^B \langle \Phi_0(h_{k-1}) \rangle 4T^3 h_k dx = 0.
    \end{align}
    The inequality \eqref{sp1}  implied by the spectral assumption on $T^B$ leads to
    % Using the spectral assumption \eqref{sp2} with $f \equiv h_k$, we get 
    \begin{align}\label{eq:h/2}
        -\int_0^B 4T^3 h_k \partial_x^2 h_k dx =  \int_0^B 4T^3 |\partial_x h_k|^2dz + \int_0^B \partial_x(4T^3) h_k \partial_x h_kdz \ge 0.
    \end{align}
    Recalling \eqref{eq:<phiB>}, we have
    \begin{align}\label{eq:h/3}
        \int_0^B 4&T^3 h_k \langle \Phi_{\phi_b=0}(h_{k-1}) \rangle dx\\
        =~&\int_0^B \int_0^B \int_0^1 \frac{1}{\mu} e^{-\frac{|x-s|}{\mu}} 16 T^3(s)h_{k-1}(s) T^3(x) h_k(x) d\mu  ds  dx \nonumber\\
        &+\int_0^B \int_{0}^1 \frac{1}{\mu} e^{-\frac{2B-x-s}{\mu}} 16 T^3(s)h_{k-1}(s) T^3(x) h_k(x)d\mu ds dx\nonumber\\
        =~&- \frac{1}{2}\int_0^B \int_0^B \int_0^1 \frac{1}{\mu} e^{-\frac{|x-s|}{\mu}} (4T^3(s)h_{k-1}(s)-4T^3(x)h_k(x))^2 d\mu  ds  dx \nonumber \\
        & - \frac{1}{2} \int_0^B \int_{0}^1 \frac{1}{\mu} e^{-\frac{2B-x-s}{\mu}} (4T^3(s)h_{k-1}(s)-4T^3(x)h_k(x))^2 d\mu ds dx\nonumber\\
        &+\frac{1}{2}\int_0^B\int_0^B\int_0^1 e^{-\frac{|x-s|}{\mu}} (16T^6(s)h_{k-1}^2(s) + 16 T^3(x)h_k^2(x))d\mu ds dx \nonumber\\
        &+\frac{1}{2}\int_0^B\int_0^B\int_0^1 \frac{1}{\mu} e^{-\frac{2B-x-s}{\mu}} (16T^6(s)h_{k-1}^2(s) + 16 T^3(x)h_k^2(x)) d\mu ds dx \nonumber\\
        &=-\frac{1}{2}\int_0^B \int_0^B \int_0^1 \frac{1}{\mu} e^{-\frac{|x-s|}{\mu}} (4T^3(s)h_{k-1}(s)-4T^3(x)h_k(x))^2 d\mu  ds  dx \nonumber \\
        & - \frac{1}{2} \int_0^B \int_{0}^1 \frac{1}{\mu} e^{-\frac{2B-x-s}{\mu}} (4T^3(s)h_{k-1}(s)-4T^3(x)h_k(x))^2 d\mu ds dx\nonumber\\
        & + \frac{1}{2} \int_0^B \int_0^1 (1-e^{-\frac{x}{s}} + 1-e^{-\frac{B-x}{\mu}} + e^{-\frac{B-x}{\mu}}-e^{-\frac{2B-x}{\mu}}) \nonumber\\
        &\cdot 16 T^6(x)(h_k^2(x)+h_{k-1}^2(x)) d\mu dx \nonumber\\
        % \le~&-\int_0^B \int_0^1 16(1+e^{-\frac{x}{\mu}}+e^{-\frac{2B-x}{\mu}}) T^6 (x)h_k^2(x) d\mu dx \nonumber\\
        \le~& \frac12 \int_0^B \int_0^1 16(2-e^{-\frac{x}{\mu}}-e^{-\frac{2B-x}{\mu}}) T^6(x) (h_k^2(x)+h_{k-1}^2(x)) d\mu dx .\nonumber\\
        % \le~& -\int_0^1(1+2e^{-\frac{B}{\mu}})d\mu \int_0^B 16 T^6 (x)h_k^2(x)dz \nonumber \\
        % &+ \int_0^1 (1-2e^{-\frac{B}{\mu}}) d\mu \int_0^B T^6(x) h_{k-1}^2(x) d\mu dx.
    \end{align}
    Taking \eqref{eq:h/2} and \eqref{eq:h/3} into \eqref{eq:h/1} gives 
    \begin{align*}
        &\frac12 \int_0^B \int_0^1 16(2+e^{-\frac{x}{\mu}}+e^{-\frac{2B-x}{\mu}})T^6(x) h_k^2(x) d\mu dx \nonumber\\
        &\quad - \frac12 \int_0^B\int_0^1 16(2-e^{-\frac{x}{\mu}} - e^{-\frac{2B-x}{\mu}}) T^6(x) h_{k-1}^2(x) d\mu dx \le 0.
    \end{align*}
    Due to 
    \begin{align*}
        \min_{x\in [0,B]}(e^{-\frac{x}{\mu}} + e^{-\frac{2B-x}{\mu}} )= e^{-\frac{B}{\mu}} + e^{-\frac{B}{\mu}} = 2e^{-\frac{B}{\mu}},
    \end{align*}
    % Suppose $T$ satisfies the spectral assumption \ref{asA}, then 
    % \begin{align}
    %     \int_0^B 4&(T^B)^3|\partial_x h_k|^2 dx - \int_0^B \partial_x(4(T^B)^3) h_k \partial_x h_k dx \ge 0,
    % \end{align}
    % and thus the previous inequality implies that 
    The previous inequality implies 
    \begin{align}\label{eq:<delta}
        \int_0^B 16 T^6 (x)h_k^2(x)dz \le  \delta \int_0^B 16 T^6 (x)h_{k-1}^2(x)dz ,
    \end{align}
    with 
    \begin{align*}
        \delta = \frac{\int_0^1(1-e^{-\frac{B}{\mu}})d\mu}{\int_0^1 (1+e^{-\frac{B}{\mu}}) d\mu} <1.
    \end{align*}
    Then \eqref{eq:<delta} can be written as 
    \begin{align*}
        \|h_k\|^2_{L^2_{16T^6}([0,B])} \le \delta  \|h_{k-1}\|^2_{L^2_{16T^6}([0,B])} ,
    \end{align*}
    which is  
    \begin{align*}
        \|g_k-g_{k-1}\|_{L^2_{16T^6}([0,B])}^2 \le \delta  \|g_{k-1}-g_{k-2}\|_{L^2_{16T^6}([0,B])}^2 ,
    \end{align*}
    with $\delta<1$. Thus the map $\mathcal{G}:L^2_{16T^6}([0,B]) \mapsto L^2_{16T^6}([0,B])$ is a contraction mapping. By the Banach fixed point theorem, there exists a unique solution to the equation \eqref{eq:gmathcalF} in $L^2_{16 T^6}([0,B])$, i.e. there exists a unique solution $g^B \in L^2_{16 T^6}([0,B]) $ to the equation \eqref{eq:gmathcalF}. By equation \eqref{eq:gmathcalF}, $\partial_x^2 g^B \in L^2([0,B])$ and thus $g^B \in C^1([0,B])$.    
    Since $g^B \in L^2_{16 T^6}([0,B]) $ is equivalent to $4(T^B)^3 g^B \in {L^2([0,B])}$. By the formula \eqref{eq:phiexact}, $\phi^B \in C([0,B]\times [-1,1])$.
    
    % By the elliptic regularity, we get $g^B \in C^2([0,B])$ and by \eqref{eq:phiexact}, $\phi^B \in C^1([0,B])$. 
    % system \eqref{eq:BBl1}-\eqref{eq:BBl2}
    % From \eqref{eq:phiexact}, we can get $\phi^B \in L^2([0,B]\times [-1,1])$.
    % Noticing that $T$ is bounded, hence the solution is also unique in $L^2(\mathbb{R}_+)$.
    % \end{proof}
    %%%%%%%%%%%%%%%%%%%%%%%%%%%%%%%%%%%%%%%%%%%%%%%%%%%%%%%%%%%%%%%%%%%%
    %%%%%%%%%%%%%%%%%%%%%%%%%%%%%%%%%%%%%%%%%%%%%%%%%%%%%%%%%%%%%%%%%%%%
    % \subsection{Uniform estimate} 
    %%%%%%%%%%%%%%%%%%%%%%%%%%%%%%%%%%%%%%%%%%%%%%%%%%%%%%%%%%%%%%%%%%%%
    %%%%%%%%%%%%%%%%%%%%%%%%%%%%%%%%%%%%%%%%%%%%%%%%%%%%%%%%%%%%%%%%%%%%
    
    \smallskip

    % Next we derive the uniform estimate for system \eqref{eq:BBl1}-\eqref{eq:BBl2}. 

    % \begin{lemma}
    %     Let $(g^B,\phi^B)$ be the solution to the system \eqref{eq:BBl1}-\eqref{eq:BBl2} with boundary conditions \eqref{eq:B/l1}-\eqref{eq:B/l2}. Then the following estimates holds for any $x>0$:
    %     \begin{align}\label{eq:el-1}
    %         &\int_0^x \int_{-1}^1 (\phi^B - 4 T^3 g^B)^2 d\mu dx + \frac12 \int_{-1}^1 \mu (\phi^B(x,\cdot) - 4T^3(x)g^B(x))^2 d\mu \nonumber\\
    %         &\quad + \frac12 \int_{-1}^0 |\mu|  (\phi^B(0,\cdot) - T_b^4)^2 d\mu  \le \frac12 \int_0^1 \mu (\phi_b-T_b^4)^2 d\mu
    %     \end{align} 
    %     and 
    %     % \begin{align}\label{eq:el-2}
    %     %     & \int_0^x \int_{-1}^1  (1-\beta \mu)e^{2\beta x} (\phi^B-4T^3 g^B)^2 d\mu dx\nonumber\\
    %     %     &\quad + \frac{1}{2}\int_{-1}^1 \mu e^{2\beta x} (\phi(x,\cdot)-4T^3(x)g^B(x))^2 d\mu + \frac{1}{2}\int_{-1}^0 |\mu| (\phi^B(0,\cdot)-4T_b^3 g_b)^2 d\mu \nonumber\\
    %     %     &\qquad\le \frac{1}{2} \int_0^1 \mu (\phi_b-4 T_b^3 g_b)^2 d\mu        
    %     %  \end{align}
    % \end{lemma}
    % \begin{proof}
        \emph{Step 2. The weighted estimates.}
        We first  multiply \eqref{eq:BBl1} by $4T^3 g^B$ and integrate over $[0,B]$, we also multiply \eqref{eq:BBl2} by $\phi^B$ and integrate over $[0,B]$ and $[-1,1]$ to get 
            \begin{align}\label{eq:ccc0}
                &- \int_0^B 4T^3 g^B \partial_x^2 g^B dx +  \int_0^B \int_{-1}^1 \mu \partial_x \frac{(\phi^B)^2}{2} d\mu dx \nonumber\\
                &\quad + \int_0^B \int_{-1}^1 (\phi^B - 4T^3 g^B)^2 d\mu dx = \int_0^B \int_{-1}^1 (\phi^B - 4T^3 g^B) S_1 d\mu dx.
            \end{align}
        By the boundary condition \eqref{eq:B/l1} and the inequality \eqref{sp1} implied by the spectral assumption \eqref{sp2},
        % the spectral assumption \ref{asA} with $\beta=0$,
         we can get
            \begin{align}\label{eq:ccc2}
                -\int_0^B 4 T^3 g^B \partial_x^2 g^B dx 
                =~& \int_0^B 4T^3 |\partial_x g^B|^2 dx + \int_0^B \partial_x (4T^3) g^B\partial_x  (g^B)  %- 4T^3 g^B \partial_x g^B \bigg|_0^x \nonumber\\
                \ge 0.%. - 4T^3 g^B \partial_x g^B \bigg|_0^x .
            \end{align}
            Using integration by parts and the boundary conditions \eqref{eq:B/l2}, we have 
            \begin{align}\label{eq:ccc/1}
                \int_0^B \int_{-1}^1  \mu \partial_x \frac{(\phi^B)^2}{2} d\mu dx &= \frac12 \int_{-1}^1 \mu (\phi^B)^2 d\mu \bigg|_0^B =  -\frac12 \int_{-1}^1 \mu (\phi^B)^2 d\mu \nonumber\\
                &= -\frac12 \int_{-1}^0 \mu (\phi^B)^2(0,\cdot) d\mu - \frac12 \int_0^1 \mu \phi_b^2 d\mu.
            \end{align}
            Using Young's inequality on the last term of \eqref{eq:ccc0} gives 
            \begin{align}
                &\int_0^B \int_{-1}^1 (\phi^B - 4T^3 g^B) S_1 d\mu dx \nonumber\\
                &\quad\le \xi \int_0^B \int_{-1}^1 (\phi^B-4T^3 g^B)^2 d\mu dx + \frac{1}{4\xi} \int_0^B |S_1|^2 dx \int_{-1}^1 d\mu \nonumber\\
                &\quad = \xi \int_0^B \int_{-1}^1 (\phi^B-4T^3 g^B)^2 d\mu dx + \frac{1}{2\xi} \int_0^B |S_1|^2 dx,
            \end{align}
            where $\xi>0$ is a positive constant and will be chosen later.
            % Comparing \eqref{eq:BBl1} with \eqref{eq:BBl2} and using $\langle S_2 \rangle = S_1$, we obtain
            % \begin{align}\label{eq:ccc/2}
            %     \partial_x (\partial_x g^B - \langle \mu \phi^B \rangle) = 0.
            % \end{align}
            % From the boundary conditions \eqref{eq:B/l1}-\eqref{eq:B/l2}, $\partial_x g^B(B)=0$, $\langle \mu \phi^B(B,\cdot)\rangle =0$, hence 
            % $\partial_x g^B (B)- \langle \mu \phi^B(B,\cdot) \rangle $. Due to \eqref{eq:ccc/2},
            % \begin{align}
            %     \partial_x g^B (x)= \langle \mu \phi^B(x,\cdot) \rangle,\quad \text{ for any } x\in [0,B].
            % \end{align}
            % Using this we can obtain 
            % \begin{align}\label{eq:ccc/3}
            %     \langle \mu (\phi^B)^2 \rangle - 4T^3 g^B =  \langle \mu (\phi^B)^2 \rangle - 4T^3 \langle \mu \phi^B\rangle = \langle \mu (\phi^B-4T^3 g^B)\rangle.
            % \end{align}
        % Using the fact that $\phi^B$ which is not in $L^{2}([0,B]\times[-1,1])$,  we can not handled the right side term by the classical approach based on the Young inequality. Therefore, inspired from Wu an Guo work \cite{wu2015geometric},  
            Taking the above inequality and \eqref{eq:ccc2}, \eqref{eq:ccc/1} into \eqref{eq:ccc0} leads to the estimate 
            \begin{align}
                &(1-\xi) \int_0^B \int_{-1}^1 (\phi^B - 4T^3 g^B)^2 d\mu dx %+\frac12 \int_0^1 \mu (\phi^B - 4T^3 g^B)^2 d\mu \bigg|_x 
                +  \frac12 \int_{-1}^0 |\mu| (\phi^B)^2(0,\cdot) d\mu \nonumber\\
                 &\quad \le  \frac12 \int_0^1 \mu \phi_b^2 d\mu + \frac{1}{2\xi} \int_0^B |S_1|^2 dx.
            \end{align} 
            Taking $\xi=\frac12$, the above inequality becomes the weighted estimate \eqref{eq:est-wB} with $\beta=0$.
        
            We next derive the weighted estimate on system \eqref{eq:BBl1}-\eqref{eq:BBl2} for $\beta>0$. Multiplying \eqref{eq:BBl1} by $e^{2\beta x} 4 T^3 g^B$ and \eqref{eq:BBl2} by $e^{2\beta x} \phi^B$ and integrating over $[0,B]$ and $[-1,1]$, we obtain 
            \begin{align}\label{eq:cc/1}
                &-\int_0^B e^{2\beta x} 4T^3 g^B \partial_x^2 g^B dx + \int_0^B \int_{-1}^1 \mu e^{2\beta x} \partial_x \frac{(\phi^B)^2}{2} d\mu dx \nonumber\\
                &\quad + \int_0^B \int_{-1}^1 e^{2\beta x} (\phi^B - 4T^3 g^B)^2 d\mu dx = \int_0^B \int_{-1}^1 e^{2\beta x} (\phi^B - 4T^3 g^B) S_1 dx .
            \end{align}
            Using integration by parts, the inequality \eqref{sp2} implied by the spectral assumption \ref{asA}, and the boundary condition \eqref{eq:B/l1}, we get 
            \begin{align}\label{eq:cc/2}
                &-\int_0^B e^{2\beta x} 4T^3 g^B \partial_x^2 g^B dx \nonumber\\
                &\quad= \int_0^B e^{2\beta x} 4T^3|\partial_x g^B|^2 dx + \int_0^x e^{2\beta x} \partial_x (2T^3) \partial_x (g^B)^2 dx \nonumber\\
                &\qquad + 2\beta \int_0^x e^{2\beta x} 4 T^3 g^B \partial_x g^B dx - e^{2\beta x} 4T^3 g^B \partial_x g^B \bigg|_0^B \nonumber\\
                &\qquad \ge  2\beta \int_0^x e^{2\beta x} 4 T^3 g^B \partial_x g^B dx. %- e^{2\beta x} 4T^3 g^B \partial_x g^B \bigg|_0^x.
            \end{align}
        Moreover, using integration by parts, we have
            \begin{align}\label{eq:cc/3}
                &\int_0^B \int_{-1}^1 \mu e^{2\beta x} \partial_x \frac{(\phi^B)^2}{2} d\mu dx \nonumber\\
                &\quad =-\beta \int_0^B \int_{-1}^1  e^{2\beta x} \mu (\phi^B)^2 d\mu dx + \frac12 \int_{-1}^1  e^{2\beta x} \mu (\phi^B)^2 d\mu\bigg|_0^B \nonumber\\
                &\quad = -\beta \int_0^B \int_{-1}^1  e^{2\beta x} \mu (\phi^B)^2 d\mu dx - \frac12 \int_{-1}^1 \mu (\phi^B)^2(0,\cdot)d\mu.
            \end{align}
                Comparing \eqref{eq:BBl1} with \eqref{eq:BBl2} and using the assumption $\langle S_2 \rangle = S_1$, we obtain
            \begin{align}\label{eq:ccc/2}
                \partial_x (\partial_x g^B - \langle \mu \phi^B \rangle) = 0.
            \end{align}
            Due to the boundary conditions \eqref{eq:B/l1}-\eqref{eq:B/l2}, 
            \[\partial_x g^B(B)=0,\quad \langle \mu \phi^B(B,\cdot)\rangle =0.\]
            %  hence 
            % \[
            % \partial_x g^B (B)- \langle \mu \phi^B(B,\cdot) \rangle = 0.
            % \]
        Therefore, by  \eqref{eq:ccc/2}, we deduce that  
            \begin{align}\label{eq:ccc/-}
                \partial_x g^B (x)= \langle \mu \phi^B(x,\cdot) \rangle,\quad \text{ for any } x\in [0,B].
            \end{align}
            Using the above relation, we have  
            \begin{align}\label{eq:ccc/3}
                \langle \mu (\phi^B)^2 \rangle - 4T^3 g^B =  \langle \mu (\phi^B)^2 \rangle - 4T^3 \langle \mu \phi^B\rangle = \langle \mu (\phi^B-4T^3 g^B)^2\rangle.
            \end{align}
            Consequently, it follows that  
            \begin{align}\label{eq:cc/4}
                &2\beta \int_0^B e^{2\beta x} 4 T^3 g^B \partial_x g^B dx - \beta\int_0^B\int_{-1}^1 e^{2\beta x} \mu (\phi^B)^2 d\mu dx \nonumber\\
                &\quad = -\beta \int_0^B\int_{-1}^1 \mu (\phi^B-4T^3 g^B)^2 d\mu dx.
            \end{align}
            For the right terms of \eqref{eq:cc/1}, we can use Young's inequality to get 
            \begin{align*}
                &\int_0^B \int_{-1}^1 e^{2\beta x}(\phi^B - 4T^3 g^B) S_1 d\mu dx \nonumber\\
                &\quad \le \xi \int_0^B \int_{-1}^1e^{2\beta x} (\phi^B - 4T^3 g^B)^2 d\mu dx + \frac{1}{2\xi} \int_0^B e^{2\beta x} |S_1|^2 dx.
            \end{align*}
            Combing the above inequality with \eqref{eq:cc/2}, \eqref{eq:cc/3} and \eqref{eq:cc/4}, the equality \eqref{eq:cc/1} becomes 
            \begin{align*}
                & \int_0^B \int_{-1}^1 (1-\xi - \mu\beta)e^{2\beta x}(\phi^B - 4T^3 g^B)^2 d\mu dx \nonumber\\%- \int_0^B\int_{-1}^1 \mu \beta e^{2\beta x} (\phi^B - 4T^3 g^B)^2 d\mu dx \nonumber\\
                &\quad +\frac12 \int_{-1}^0 |\mu| (\phi^B)^2(0,\cdot) d\mu  \le  \frac12 \int_0^1 \mu \phi_b^2 d\mu + \frac{1}{2\xi} \int_0^B e^{2\beta x} S_1^2 dx .
            \end{align*}
            Taking $\beta < 1$, then $1-\xi-\mu\beta \ge 1-\xi-\beta$. Taking $\xi=(1-\beta)/2$, the above inequality implies \eqref{eq:est-wB} and finishes the proof of Lemma \ref{lm:exbd}. 
            % \begin{align}\label{eq:est-wB}
            %     &\left(\frac12 -\beta \right)\int_0^B \int_{-1}^1 e^{2\beta x}(\phi^B - 4T^3 g^B)^2 d\mu dx +\frac12 \int_{-1}^0 |\mu| (\phi^B)^2(0,\cdot) d\mu \nonumber\\
            %     &\qquad \le  \frac12 \int_0^1 \mu \phi_b^2 d\mu + \frac12 \int_0^B e^{2\beta x} S_1^2 dx +  \int_0^B \int_{-1}^1 e^{2\beta x}\left(S_2 - \frac12 \langle S_2\rangle\right)^2 d\mu dx.
            % \end{align}
            % Taking the above two inequalities into \eqref{eq:cc/1} gives 
            % \begin{align}
            %     &\int_0^x \int_{-1}^1 e^{2\beta x} (\phi^B - 4T^3 g^B)^2 d\mu dx - \beta \int_0^x e^{2\beta x} \langle \mu (\phi^B)^2 \rangle dx \nonumber\\
            %     &\quad+ 2\beta \int_0^x e^{2\beta x} 4T^3 g^B \partial_x g^B dx + \frac12 e^{2\beta x}(\langle \mu (\phi^B)^2 \rangle - 4T^3 g^B \partial_x g^B)\bigg|_0^x \le 0.
            % \end{align}
            % Using the relation \eqref{eq:ccc4}, the above inequality becomes 
            % \begin{align}
            %     &\int_0^x \int_{-1}^1 e^{2\beta x} (\phi^B - 4T^3 g^B)^2 d\mu dx - \beta \int_0^x e^{2\beta x} \langle \mu (\phi^B - 4T^3g^B)^2 \rangle dx \nonumber\\
            %     &\quad + \frac12 e^{2\beta x} \langle \mu (\phi^B - 4T^3 g^B)^2 \rangle \bigg|_0^x \le 0.
            % \end{align}
            % Using the boundary conditions \eqref{eq:B/l1}-\eqref{eq:B/l2} in the above inequality gives \eqref{eq:l-2} and finishes the proof.
        \end{proof}

\subsection{Existence on the half-space} \label{LMB3}
Next we pass to the limit $B\to\infty$ in system \eqref{eq:BBl1}-\eqref{eq:BBl2} and show the existence for system \eqref{eq:l-1}-\eqref{eq:l-2}.
\begin{proof}[Proof of Theorem \ref{thm2}, existence.]
    By the relation \eqref{eq:ccc/-}, we have 
    \begin{align}
        \int_0^B e^{2\beta x} |\partial_x g^B|^2 dx = \int_0^B e^{2\beta x} |\langle \mu \phi^B \rangle|^2 dx.
    \end{align}
    Using the fact $\langle \mu 4T^3g^B\rangle =0$ and H\"older's inequality, the above equation equals 
    \begin{align}\label{eq:el/00pre}
        \int_0^B e^{2\beta x} |\partial_x g^B|^2 dx &= \int_0^B e^{2\beta x}\left(\int_{-1}^1 \mu (\phi^B-4T^3g^B)d\mu \right)^2 dx \nonumber\\
        &\le \int_0^B \left(\int_{-1}^1 \mu d\mu \right)\left(\int_{-1}^1(\phi^B-4T^3 g^B)^2 d\mu\right) dx \nonumber\\
        &\le \frac23\int_0^B\int_{-1}^1 e^{2\beta x} (\phi^B-4T^3 g^B)^2 d\mu dx \nonumber\\
        &\le \frac23 \frac{1}{1-\beta} \int_0^1 \mu \phi_b^2 d\mu + \frac{4}{3(1-\beta)^2} \int_0^\infty e^{2\beta x} |S_1|^2 dx,
    \end{align}
    which is uniformly bounded. Hence 
    \begin{align}
        |g^B(x)| &= |g^B(x)-g^B(0)| = \left|\int_0^x \partial_z g^B(z)dz \right| \nonumber\\
        & = \left|\int_0^x e^{-\beta z} e^{\beta z} \partial_z g^B(z) dz \right| \nonumber\\
        &\le \left(\int_0^x e^{2\beta z} |\partial_z g|^2 dz \right)^{\frac12} \left(\int_0^x e^{-2\beta z} dz\right)^{\frac12} \nonumber\\
        &\le \frac{1}{\sqrt{2\beta}}e^{-\beta x}\left(\frac23 \frac{1}{1-\beta} \int_0^1 \mu \phi_b^2 d\mu + \frac{4}{3(1-\beta)^2} \int_0^\infty e^{2\beta x} |S_1|^2 dx\right)^\frac12 \nonumber\\
        &=N_{\beta} e^{-\beta x}.
    \end{align}
    Hence $g^B \in L_{\rm loc}^2(\mathbb{R}_+)$. By the equation \eqref{eq:BBl1}, 
    \begin{align}
        \int_0^B e^{2\beta x} |\partial_x^2 g^B|^2 dx \le \frac23 \int_0^B\int_{-1}^2 (\phi^B-4T^3g^B)^2 d\mu dx + 2\int_0^B e^{2\beta x} |S_1|^2 dx
    \end{align}
    and is also uniformly bounded. Hence $g^B\in H^2_{\rm loc}(\mathbb{R}_+)$ is uniformly bounded and we can find a subsequence that 
    \begin{equation}\label{eq:gconv/00}
    \begin{aligned}
        &g^B \rightharpoonup g,\quad \text{weakly in } H^2_{\rm loc}(\mathbb{R}_+),\\
        &g^B \to g,\quad \text{strongly in } C^1_{\rm loc}(\mathbb{R}_+).
    \end{aligned}
    \end{equation}
    Moreover, by the continuity of the trace operator, we can pass to the limit $B\to\infty$ in \eqref{eq:B/l1} and get that $g(0)=0$.

    By the above convergence result and the uniform boundness of $\int_0^B\int_{-1}^1 (\phi^B-4T^3g^B)^2 d\mu dx$, there exists subsequence such that 
    \begin{align}
        \phi^B - 4T^3 g^B \rightharpoonup \phi - 4T^3 g,\quad\text{weakly in } L^2_{\rm loc}(\mathbb{R}_+\times [-1,1]).
    \end{align}
    Hence 
    \begin{align}\label{eq:phiconv/00}
        \phi^B \rightharpoonup \phi,\quad \text{weakly in } L^2_{\rm loc}(\mathbb{R}_+\times [-1,1]).
    \end{align}
    Moreover, by equation \eqref{eq:BBl2},
    \begin{align}
        \int_0^B\int_{-1}^1 |\partial_x (\mu\psi)|^2 d\mu dx \le \int_0^B \int_{-1}^1 (\phi^B-4T^3 g^B)^2 d\mu dx + 2\int_0^B |S_1|^2 dx,
    \end{align}
    is uniformly bounded, hence $\partial_x (\mu\psi)\in L^2_{\rm loc}(\mathbb{R}_+\times [-1,1])$. Thus we can use the trace theorem and pass to the limit in \eqref{eq:B/l2} to get that $\phi(0,\mu)=\phi_b(\mu)$ for any $\mu>0$. Moreover, we can pass the weak limit in \eqref{eq:BBl1}-\eqref{eq:BBl2} and use \eqref{eq:gconv/00} and \eqref{eq:phiconv/00} and obtain that $(g,\phi)$ satisfies the system \eqref{eq:l-1}-\eqref{eq:l-2} with boundary conditions \eqref{eq:l-1b}-\eqref{eq:l-2b}.

    The proof is divided in to four steps. In the first step, we pass to the limit $B\to\infty$ in  \eqref{eq:B/l1}-\eqref{eq:B/l2} and prove the limit satisfies system \eqref{eq:l-1}-\eqref{eq:l-2} in the weak sense. Then we show the uniform estimate and weighted decay property \eqref{eq:thm2.decay}.
% We first use the uniform estimate \eqref{eq:el-2} to pass to the limit $B\to\infty$ in system. Then we derive the estimate \eqref{eq:li/2}. Finally, we show the decay property of solutions.
\subsection{Weighted estimate and exponentially decay}\label{LMB4}
Next we derive the uniform estimate for system \eqref{eq:l-1}-\eqref{eq:l-2} on the half-space and show the exponentially decay properties.
\begin{proof}[Proof of Theorem \ref{thm2}, weighted estimate and decay]
    The weak convergence of $\phi^B- 4(T^B)^3 g^B$ in $L^2_{\rm loc}(\mathbb{R}_{+})$ and the weak lower semi-continuity of the norm $\|\cdot\|_{L^2}$ implies that 
    \begin{align}
        \|e^{\beta x}(\phi-4T^3 g)\|_{L^2(\mathbb{R}_{+}\times [-1,1])}^2\le \liminf_{B\to\infty}\|e^{\beta x}(\phi^B - 4(T^B)^3 g^B) \|_{L^2([0,B]\times [-1,1])}^2.
    \end{align}
    Moreover, by the trace theorem we have 
    \begin{align}
       \lim_{B\to\infty} \int_{-1}^0|\mu| (\phi^B)^2(0,\cdot) d\mu = \int_{-1}^0 |\mu| \phi^2(0,\cdot) d\mu.
    \end{align}
    Therefore, we can take the $\liminf_{B\to\infty}$ in \eqref{eq:est-wB} and obtain 
    \begin{align}\label{eq:el/00}
        &\int_0^\infty \int_{-1}^1 e^{2\beta x}(\phi^B - 4(T^B)^3 g^B)^2 d\mu dx +\frac{1}{1-\beta} \int_{-1}^0 |\mu| \phi^2(0,\cdot) d\mu \nonumber\\
        &\qquad \le  \frac{1}{1-\beta} \int_0^1 \mu \phi_b^2 d\mu + \frac{2}{(1-\beta)^2} \int_0^\infty e^{2\beta x} |S_1|^2 dx,
    \end{align}
    which is the estimate \eqref{eq:el/2} in Theorem \ref{thm2}.

    To show the exponentially decay property, we first note that the condition \eqref{eq:ccc/-} still holds for $g,\phi$ on the half-line. We can pass to the limit in \eqref{eq:ccc/-} due to \eqref{eq:gconv/00} and \eqref{eq:phiconv/00}, and 
    \begin{align}\label{eq:---}
        \partial_x g(x) = \langle \mu \phi(x,\cdot)\rangle,\quad \text{ for } x \in \mathbb{R}_+ \quad \text{almost everywhere}.
    \end{align}
    In order to show the above relation still holds at infinity, we need to show this quantity are bounded.
    To show this, first comparing the equation \eqref{eq:l-1} to $\langle \eqref{eq:l-2}\rangle$ gives 
    \begin{align}
        \partial_x(\partial_x g - \langle \mu \phi\rangle) = 0.
    \end{align}
    By equation \eqref{eq:l-2} and the estimate \eqref{eq:el/00}, similarly as \eqref{eq:el/00pre},
    \begin{align}\label{eq:av/1}
        \int_0^\infty e^{2\beta x} |\langle \mu \phi \rangle|^2 dx &\le \frac23 \int_0^\infty\int_{-1}^1 (\phi-4T^3 g)^2 d\mu dx \nonumber\\
        &\le  \frac23\frac{1}{1-\beta} \int_0^1 \mu \phi_b^2 d\mu + \frac{4}{3(1-\beta)^2} \int_0^\infty e^{2\beta x} |S_1|^2 dx.
    \end{align}
    Moreover, we have 
    \begin{align}
        \partial_x \langle \mu \phi\rangle = -\langle \phi-4T^3 g\rangle + \langle S_1\rangle,
    \end{align} 
    and so 
    \begin{align}\label{eq:av/2}
        &\int_0^\infty e^{2\beta x}|\partial_x \langle \mu\phi\rangle|^2 dx\nonumber\\
         &\quad\le \int_0^\infty e^{2\beta x}|\langle \phi-4T^3 g\rangle|^2 dx + \int_0^\infty e^{2\beta x}|\langle S_1\rangle|^2 dx \nonumber\\
        &\quad\le 2 \int_0^\infty\int_{-1}^1 e^{2\beta x}(\phi-4T^3 g)^2 dx + 2\int_0^\infty e^{2\beta x}|S_1|^2 dx \nonumber\\
        &\quad\le 2\int_0^\infty e^{2\beta x}|S_1|^2 dx+  \frac{2}{1-\beta} \int_0^1 \mu \phi_b^2 d\mu + \frac{4}{(1-\beta)^2} \int_0^\infty e^{2\beta x} |S_1|^2 dx
    \end{align}
    From \eqref{eq:av/1} and \eqref{eq:av/2}, we can apply the Barbalat's lemma \cite{farkas2016variations} and deduce
    \[
    \langle \mu \phi(x,\cdot) \rangle \to 0, \quad \text{as} \quad x\to \infty
    \]
    Hence 
    \begin{align}
        | \langle \mu \phi(x,\cdot) \rangle| = \left|\int_x^\infty e^{-\beta x} e^{\beta x}\partial_x \langle \mu\phi\rangle dx\right| \le \frac{1}{\sqrt{2\beta}} e^{-\beta x}\left(\int_0^\infty e^{2\beta x}|\partial_x \langle \mu\phi\rangle|^2 dx\right)^\frac12,
    \end{align}
    and is uniformly bounded. Thus \eqref{eq:---} holds for all $x\in [0,\infty]$. Using the relation \eqref{eq:---} and \eqref{eq:av/1}, we obtain 
    \begin{align}
        \int_0^\infty e^{2\beta x}|\partial_x g|^2 dx \le  \frac23\frac{1}{1-\beta} \int_0^1 \mu \phi_b^2 d\mu + \frac{4}{3(1-\beta)^2} \int_0^\infty e^{2\beta x} |S_1|^2 dx = 2\beta N_{\beta}^2.
    \end{align}
    Hence 
    \begin{align}
       | g(x)| = \left|\int_0^x \partial_z g(z) dz\right| &= \left|\int_0^x   e^{-\beta x} e^{\beta x}\partial_x g dx \right| \nonumber\\
       &\le \frac{1}{\sqrt{2\beta}} \left(\int_0^\infty e^{2\beta x}|\partial_x g|^2 dx\right)^\frac12 = N_{\beta}.
    \end{align}
    We can let $x\to \infty$ and obtain $g_\infty:=\lim_{x\to\infty} g(x)$ exists and 
    \begin{align}
        |g_\infty|\le N_{\beta}.
    \end{align}
    We also have 
    \begin{align}
        |g(x)-g_\infty| &=  \left|\int_x^\infty   e^{-\beta x} e^{\beta x}\partial_x g dx \right| \nonumber\\
        &\le \frac{1}{\sqrt{2\beta}} e^{-\beta x} \left(\int_0^\infty e^{2\beta x}|\partial_x g|^2 dx\right)^\frac12 = N_{\beta} e^{-\beta x},
    \end{align}
    which gives the first inequality in \eqref{eq:thm2.decay}. The other two inequalities in \eqref{eq:thm2.decay} can be derived by using the formula \eqref{eq:sol1/mu+}-\eqref{eq:sol1/mu-} in a similar manner as \eqref{eq:dec1}-\eqref{eq:dec2}. For $\mu>0$,
    \begin{align}\label{eq:dec11}
        &\left|\psi(x,\mu) - 4T_\infty^3 g_\infty - \int_0^x \frac{1}{\mu} e^{-\frac{x-s}{\mu}} S_1(s) ds\right|\\
         &= \left|\psi_b e^{-\frac{x}{\mu}} + \int_0^x \frac{1}{\mu} e^{-\frac{x-s}{\mu}} (4T^3(s) g(s) + S_1(s)) ds - 4 T_\infty^3 g_\infty \right| \nonumber\\
        & = \left|\psi_b e^{-\frac{x}{\mu}} + \int_0^x \frac{1}{\mu} e^{-\frac{x-s}{\mu}}(4T^3(s)g(s) - T_\infty^4) ds - 4T^3_\infty g_\infty e^{-\frac{x}{\mu}} ds\right| \nonumber\\
        &\le  |\psi_b-4T_\infty^3 g_\infty| e^{-\frac{x}{\mu}} + \int_0^x \frac{1}{\mu} e^{-\frac{x-s}{\mu}}|4T^3(s)g(s) - 4T_\infty^3g_\infty| ds  \nonumber \\
        &\le |\psi_b - 4T_\infty^3 g_\infty| e^{-\frac{x}{\mu}} + \int_0^x \frac{1}{\mu} e^{-\frac{x-s}{\mu}} 4|T^3(s)-T_\infty^3|g(s) ds \nonumber\\
        &\quad + \int_0^x \frac{1}{\mu} e^{-\frac{x-s}{\mu}} 4T_\infty^3|g(s)-g_\infty| ds \nonumber\\
        &\le  |\psi_b - 4T_\infty^3g_\infty| e^{-\frac{x}{\mu}} + \int_0^x \frac{1}{\mu} e^{-\frac{x-s}{\mu}} 4\cdot 3(T_b+2M_{\alpha})^2M_{\alpha}e^{-\alpha x} N_{\beta} ds \nonumber\\
        &\quad + \int_0^x \frac{1}{\mu} e^{-\frac{x-s}{\mu}} 4(T_b+M_{\alpha})^3 N_{\beta} e^{-\beta x} dx \nonumber\\
        &= |\psi_b - 4T_\infty^3g_\infty| e^{-\frac{x}{\mu}}  + 12(T_b+2M_{\alpha})^2M_{\alpha} N_{\beta} \frac{1}{1-\mu \alpha}(e^{-\alpha x} - e^{-\frac{x}{\mu}}) \nonumber\\
        &\quad + 4(T_b+M_{\alpha})^3N_{\beta} \frac{1}{1-\mu\beta} (e^{-\beta x}-e^{-\frac{x}{\mu}}) \nonumber\\
        &\le |\psi_b - 4T_\infty^3 g_\infty| e^{-\frac{x}{\mu}} + 4(T_b+2M_{\alpha})^2(4M_{\alpha}+T_b)N_{\beta} e^{-\beta x},        % &\le |\psi_b-T_\infty^4| e^{-\frac{x}{\mu}} + Ce^{-\alpha x}(1-e^{-\frac{x}{\mu}}) \nonumber\\
        % &\le |\psi_b-T_\infty^4| e^{-\frac{x}{\mu}} + Ce^{-\alpha x},
    \end{align}
    where in the last inequality we take $\alpha=\beta$. For $\mu<0$,
    \begin{align}\label{eq:dec12}
        &\left| \psi(x,\mu) - 4T_\infty^3 g_\infty + \int_x^\infty \frac{1}{\mu} e^{\frac{x-s}{\mu}} S_1(s) ds\right| \nonumber\\
        &= \left|- \int_x^\infty \frac{1}{\mu} e^{-\frac{x-s}{\mu}}4T^3(s)g(s) ds - 4T_\infty^3g_\infty \right|\nonumber\\
        & = \left|- \int_x^\infty \frac{1}{\mu} e^{-\frac{x-s}{\mu}}4T^3(s)g(s) ds  + \int_x^\infty \frac{1}{\mu} 
        e^{-\frac{x-s}{\mu}} 4T^3_\infty g_\infty ds\right| \nonumber\\
        &\le -\int_x^\infty \frac{1}{\mu} 
        e^{-\frac{x-s}{\mu}}  |4T^3(s)g(s) - 4T_\infty^3g_\infty| ds \nonumber \\
        &\le -\int_x^\infty \frac{1}{\mu} e^{-\frac{x-s}{\mu}} 4|T^3(s)-T_\infty^3|g(s) ds - \int_x^\infty \frac{1}{\mu} e^{-\frac{x-s}{\mu}} 4T_\infty^3|g(s)-g_\infty| ds \nonumber\\     
        &\le -\int_x^\infty \frac{1}{\mu} e^{-\frac{x-s}{\mu}} 4\cdot 3(T_b+2M_{\alpha})^2M_{\alpha}e^{-\alpha x} N_{\beta} ds \nonumber\\&\quad- \int_x^\infty \frac{1}{\mu} e^{-\frac{x-s}{\mu}} 4(T_b+M_{\alpha})^3 N_{\beta} e^{-\beta x} dx \nonumber\\
        &= 12(T_b+2M_{\alpha})^2M_{\alpha} N_{\beta} \frac{1}{1-\mu \alpha}e^{-\alpha x} + 4(T_b+M_{\alpha})^3N_{\beta} \frac{1}{1-\mu\beta} e^{-\beta x} \nonumber\\
        &= -4(T_b+2M_{\alpha})^3 M_{\alpha}\frac{1}{-1+\mu\alpha} e^{-\alpha x} \nonumber\\
        &\le  4(T_b+2M_{\alpha})^2(4M_{\alpha}+T_b)N_{\beta} e^{-\beta x},
     %    &\le -C e^{-\alpha x}\int_x^\infty \frac{1}{\mu} e^{-\frac{x-s}{\mu}} ds \nonumber\\
     %    &= C e^{-\alpha x} . 
    \end{align}
    where in the last inequality we also take $\alpha=\beta$. Since for $\mu>0$
    \begin{align}
        \left|\int_0^x \frac{1}{\mu} e^{-\frac{x-s}{\mu}} S_1(s) ds\right|&= \left|\int_0^x \frac{1}{\mu} e^{-\frac{x-s}{\mu}} e^{-\beta s} e^{\beta s} S_1(s) ds\right| \nonumber\\
        &\le \left(\int_0^x \frac{1}{\mu^2} e^{-\frac{2(x-s)}{\mu}} e^{-2\beta s} ds\right)^{\frac12} \left(\int_0^x e^{2\beta s}|S_1(s)|^2 ds\right)^\frac12 \nonumber\\
        &\le \frac{1}{1-2\mu\beta} (e^{-2\beta x} - e^{-2\frac{x}{\mu}})^\frac12 \|e^{\beta x}S_1\|_{L^2(\mathbb{R}_+)} \nonumber\\
        &\le \frac{1}{1-2\mu\beta}\|e^{\beta x}S_1\|_{L^2(\mathbb{R}_+)}  e^{-\beta x},
        % ,\quad \text{for }\mu>0,\\
        % &\left|\int_x^\infty \frac{1}{\mu} e^{-\frac{x-s}{\mu}} S_1(s)ds \right| \le 
    \end{align}
    and similarly for $\mu<0$,
    \begin{align}
        \left|\int_x^\infty \frac{1}{\mu} e^{-\frac{x-s}{\mu}} S_1(s) ds\right|&= \left|\int_x^\infty \frac{1}{\mu} e^{-\frac{x-s}{\mu}} e^{-\beta s} e^{\beta s} S_1(s) ds\right| \nonumber\\
        &\le \left(\int_x^\infty \frac{1}{\mu^2} e^{-\frac{2(x-s)}{\mu}} e^{-2\beta s} ds\right)^{\frac12} \left(\int_x^\infty e^{2\beta s}|S_1(s)|^2 ds\right)^\frac12 \nonumber\\
        &\le \frac{1}{1-2\mu\beta} e^{-\beta x}  \|e^{\beta x}S_1\|_{L^2(\mathbb{R}_+)}.
        % &\le \frac{1}{1-2\mu\beta}\|e^{\beta x}S_1\|_{L^2(\mathbb{R}_+)}  e^{-\beta x},
    \end{align}
    Combing the above inequalities with \eqref{eq:dec11} and \eqref{eq:dec12}, we obtain the last two inequalities in \eqref{eq:thm2.decay}.
\end{proof}

\subsection{Uniqueness}\label{LMB5}
Suppose $(g_1,\phi_1)$ and $(g_2,\phi_2)$ are two solutions of the system \eqref{eq:l-1}-\eqref{eq:l-2} with boundary conditions \eqref{eq:l-1b}-\eqref{eq:l-2b}. Then $h=g_1-g_2$ and $\varphi = \phi_1-\phi_2$ satisfy the system 
\begin{align}
 \partial_x^2 h + \langle \varphi - 4T^3 h \rangle &=0, \label{eq:f/1}\\
 \mu \partial_x \varphi + (\varphi - 4T^3 h) &= 0,\label{eq:f/2}
\end{align}
and the boundary conditions 
\begin{align}\label{eq:ff1}
 h(0) = 0, \quad \varphi(0,\mu) = 0,\text{ for }\mu>0.
\end{align}
From the previous section, we have that $h,\varphi$ also verify 
\begin{align}
 \lim_{x\to\infty}\partial_x h = 0, \lim_{x\to\infty}\langle \mu \varphi(x,\cdot)\rangle = 0,\label{eq:ff2}
\end{align}
and  
\begin{align}
 \partial_x h(x) = \langle \mu \varphi(x,\cdot) \rangle, \quad\text{for any} \quad x\in \mathbb{R}_{+}.\label{eq:ff3}
\end{align}

We now multiply \eqref{eq:f/1} by $4T^3 h$ and \eqref{eq:f/2} by $\varphi$ and integrate to get 
\begin{align*}
 -\int_0^\infty 4T^3 h \partial_x^2 h dx + \int_0^\infty \mu \partial_x \frac{\varphi^2}{2} d\mu dx + \int_0^\infty \int_{-1}^1 (\varphi-4T^3 h)^2 d\mu dx =0.
\end{align*}
We  integrate by parts and by using \eqref{eq:ff1}-\eqref{eq:ff2} and  the inequality \eqref{sp2} due to the spectral assumption \ref{asA}, we obtain
\begin{align*}
 -\int_0^\infty 4&T^3 h \partial_x^2 h dx + \int_0^\infty \mu \partial_x \frac{\varphi^2}{2} d\mu dx \nonumber\\
 =~&- 4T^3 h \partial_x h \bigg|_0^\infty + \left(\int_0^\infty 4T^3 |\partial_x h|^2 dx + \frac{1}{2} \int_0^\infty \partial_x(4T^3) \partial_x g^2 dx\right)+ \frac{1}{2}\langle \mu \varphi^2\rangle \bigg|_0^\infty \nonumber   \\
 =~& -\frac{1}{2} \langle \mu(\varphi(0,\cdot)-4T^3 h(0))^2 dx + \left(\int_0^\infty 4T^3 |\partial_x h|^2  + \frac{1}{2} \int_0^\infty \partial_x(4T^3) \partial_x g^2 dx\right)  \nonumber\\
 \ge~& \frac{1}{2}\int_{-1}^0 |\mu|(\varphi(0,\cdot)|^2 d\mu.
\end{align*}
The relation \eqref{eq:ff3} is used in the second equality. % and the spectral assumption \ref{asA} is used in the last inequality above.
Taking the above inequality into the previous equation leads to 
\begin{align*}
 \frac{1}{2}\int_{-1}^0 |\mu|(\varphi(0,\cdot)|^2 d\mu + \int_0^\infty \int_{-1}^1 (\varphi-4T^3 h)^2 d\mu dx \le 0.
\end{align*}
This implies that 
\begin{align*}
 \varphi - 4T^3 h = 0, \varphi(0,\mu) =0,\text{ for any }\mu\in [-1,1],
\end{align*}
and so 
\begin{align*}
 \partial_x^2 h = 0, \partial_x \langle \mu \varphi \rangle =0, \langle \mu \varphi(0,\cdot) \rangle =0.
\end{align*}
Therefore, we obtain 
\begin{align*}
 \langle \mu \varphi(x,\cdot) \rangle = 0, \text{ for any }x\ge 0
\end{align*}
and 
\begin{align*}
 \partial_x h (x)= \langle \mu \varphi(x,\cdot)\rangle =0, \quad \text{for any}\quad x\ge 0.
\end{align*}
 Consequently, we have  $h \equiv 0$ and $\varphi \equiv 4T^3 h \equiv 0 $. Finally,  system \eqref{eq:f/1}-\eqref{eq:f/2} has only zero solutions. Therefore, the solution to system \eqref{eq:l-1}-\eqref{eq:l-2} is unique.

\end{proof}

\section{Uniqueness of the nonlinear Milne problem}\label{proofThm3}
In this section, we prove Theorem \ref{thm3}. We first show the uniqueness for small boundary data and then for small perturbations in a neighborhood of functions satisfying the spectral assumptions.

\subsection{Sufficient condition for the spectral assumption} \label{51}
We next give some sufficient and necessary conditions for the spectral assumption \ref{asA} using the Hardy's inequality.
% The  spectral assumption \ref{asA} is related to the Hardy's inequality. We next give some sufficient and necessary conditions for this assumption.
   \begin{lemma}\label{lem.spcond}
    The spectral assumption \ref{asA} holds for function $T\in C^1(\mathbb{R}_+)$ if 
    \begin{align}\label{eq:Acond}
        A_0 := \sup_{r\in (0,\infty)} \left(\int_r^\infty e^{2\beta x} 36 T|\partial_x T|^2 dx \right)^{\frac12} \left(\int_0^r \frac{1}{e^{2\beta x}4T^3 } dx\right)^{\frac12} < \infty
    \end{align}
    satisfies $A_0 < \frac12$ for some constant $\beta>0$. Moreover, if the spectral assumption \ref{asA} holds, then $A_0<1$.
%    {\color{red}
    Furthermore, if $T$ satisfies $T_m\le |T| \le C$, $|\partial_x T| \le C e^{-\alpha x}$ for some constants $T_m>0$, $C>0$ and $\alpha>0$. Then the spectral assumption \ref{asA} holds if 
    \begin{align}
    A_1 := \sup_{r\in (0,\infty)} \left(\int_r^\infty  36 T|\partial_x T|^2 dx \right)^{\frac12} \left(\int_0^r \frac{1}{4T^3 } dx\right)^{\frac12} < \frac12 .
    \end{align}
\end{lemma}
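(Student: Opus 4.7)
The plan is to recast the spectral assumption \ref{asA} as a weighted Hardy inequality on the half-line and then invoke Muckenhoupt's characterization. Since $\partial_x(2T^{3/2}) = 3T^{1/2}\partial_x T$, we have $|\partial_x(2T^{3/2})|^2 = 9T|\partial_x T|^2$ and $(2T^{3/2})^2 = 4T^3$, so \eqref{eq:spassump2} is equivalent to the estimate
\begin{align*}
\int_0^\infty u(x)\,f^2\,dx \le M\int_0^\infty v(x)\,|\partial_x f|^2\,dx,\qquad f(0)=0,
\end{align*}
with the weights $u(x) = 36\,T(x)|\partial_x T(x)|^2 e^{2\beta x}$ and $v(x) = 4T(x)^3 e^{2\beta x}$. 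The quantity $A_0^2$ in the statement is exactly Muckenhoupt's characteristic $\sup_{r>0}\bigl(\int_r^\infty u\,dx\bigr)\bigl(\int_0^r v^{-1}\,dx\bigr)$ for this pair of weights.

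I would then apply the classical $L^2$ Hardy theorem: the displayed inequality holds for every $f\in C^1$ with $f(0)=0$ if and only if $A_0<\infty$, and the sharp constant satisfies $A_0^2 \le M \le 4A_0^2$. Both directions of the main claim follow at once. For sufficiency, if $A_0 < 1/2$ then the Hardy inequality holds with $M \le 4A_0^2 < 1$, which is \eqref{eq:spassump2}. For necessity, if \ref{asA} holds with some $M<1$, then $A_0^2 \le M < 1$, so $A_0 < 1$.

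For the last assertion, the strategy is a perturbation argument showing $A_0 = A_0(\beta) \to A_1$ as $\beta\to 0^+$ under the extra bounds $T_m\le T\le C$ and $|\partial_x T|\le C e^{-\alpha x}$. I would split the supremum into $r\in[0,R]$ and $r\ge R$. On the tail, fixing $\beta\le \alpha/2$, the decay of $\partial_x T$ gives $\int_r^\infty e^{2\beta x}36 T|\partial_x T|^2\,dx \le \frac{36 C^3}{\alpha}e^{-\alpha r}$, while $\int_0^r e^{-2\beta x}/(4T^3)\,dx \le r/(4T_m^3)$; their product is $O(re^{-\alpha r})$, which can be made smaller than $1/4$ by taking $R$ large, uniformly in small $\beta$. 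On $[0,R]$, dominated convergence with integrable majorants (provided by the assumed bounds on $T$ and $\partial_x T$) gives uniform convergence in $r$ of both integrals to their $\beta=0$ counterparts. Hence $A_0(\beta)\to A_1$, and choosing $\beta>0$ small enough yields $A_0(\beta)<1/2$; the first part of the lemma then concludes.

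The only real obstacle is bookkeeping: translating \ref{asA} into the canonical Hardy form with the correct identification of weights, and tracking the constant $4$ in Muckenhoupt's upper bound, which is what forces the critical threshold $A_0<1/2$ in the sufficient condition rather than $A_0<1$ in the necessary one. The $\beta\to 0$ perturbation is routine once the exponential decay of $\partial_x T$ is used to control the tail uniformly in $\beta$.
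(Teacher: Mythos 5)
Your proposal is correct and follows essentially the same route as the paper: both recast \eqref{eq:spassump2} as a weighted $L^2$ Hardy inequality with weights $u=36T|\partial_x T|^2e^{2\beta x}$, $v=4T^3e^{2\beta x}$, invoke the Muckenhoupt characterization with the two-sided bound $A_0^2\le M_0\le 4A_0^2$ to get sufficiency at threshold $A_0<\tfrac12$ and necessity at $A_0<1$, and then treat the last assertion as a small-$\beta$ perturbation of $A_1$ using $T\ge T_m$ and $|\partial_x T|\le Ce^{-\alpha x}$. The only cosmetic difference is in that last step, where you split the supremum over $r\in[0,R]$ and $r\ge R$ while the paper writes $e^{2\beta x}=(e^{2\beta x}-1)+1$ and bounds the correction term directly; both yield $A_0(\beta)\le A_1+\eps<\tfrac12$ for $\beta$ small.
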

%{\color{blue} Do we need this? It seems useless.}
\begin{proof}
    This lemma is a direct consequence of the the generalized Hardy's inequality (Lemma \ref{lm:hardy} in Appendix \ref{Hardyinequality}). We take $u(x) = e^{2\beta x} 36 T|\partial_x T|^2$, $v(x)=e^{2\beta x} 4T^3$, $F(x) = \partial_x f$ and $b=\infty$, $p=p'=2$ in \eqref{eq:hardy} and get that
    \begin{align}\label{eq:lm5/est}
        \int_0^\infty f^2 e^{2\beta x}  36 T|\partial_x T|^2 dx \le M_0 \int_0^\infty |\partial_x f|^2 e^{2\beta x} 4T^3 dx,
    \end{align}
    holds for any $f \in C^1(\mathbb{R}_+)$ with $f(0)=0$,
    with the best constant $M_0$ satisfying $A_0^2\le M_0 \le 4A_0^2$, where $A_0$ is given by \eqref{eq:Acond}.
    
    % If $A_0<\frac12$, $M_0 \le 4A_0^2 <1$, so \eqref{eq:lm5/est} holds with $M_0<1$. Moreover, for any $0\le \beta \le \beta_0$, define
    % \begin{align}
    %     A' := \sup_{r\in (0,\infty)} \left(\int_r^\infty e^{2\beta x} 36 T|\partial_x T|^2 dx \right)^{\frac12} \left(\int_0^r \frac{1}{e^{2\beta x}4T^3 } dx\right)^{\frac12}. 
    % \end{align}
    % Then due to $\beta\le\beta_0$, 
    % \begin{align}
    %     \int_r^\infty e^{2\beta x} 36 T|\partial_x T|^2 dx &= \int_r^\infty e^{2\beta_0 x} e^{-2(\beta_0-\beta) x} 36 T|\partial_x T|^2 dx \nonumber\\
    %     &\le e^{-2(\beta_0-\beta) r} \int_r^\infty e^{2\beta_0 x} 36 T|\partial_x T|^2 dx,
    % \end{align}
    % and 
    % \begin{align}
    %     \int_0^r \frac{1}{e^{2\beta x} 4T^3 } dx =  \int_0^r \frac{1}{e^{2\beta_0 x} 4T^3 } e^{2(\beta_0-\beta) x} dx \le e^{2(\beta_0-\beta) r}  \int_0^r \frac{1}{e^{2\beta_0 x} 4T^3 } dx,
    % \end{align}
    % we get 
    % \begin{align}
    %     A' \le A_0 < \frac12.
    % \end{align}
    % Therefore, by the Hardy's inequality, \eqref{eq:lm5/est} also holds when $\beta_0$ is replaced by $\beta$, i.e. \eqref{eq:spassump2} holds for some constant $M\le 4(A')^2 <1$. 
    
    On the other hand, if the spectral assumption \ref{asA} holds, then the best constant $M_0$ in \eqref{eq:lm5/est} satisfies $M_0<1$. Since $A_0^2 \le M_0$ due to Lemma \ref{Hardyinequality}, $A_0<1$.

    Finally, if $|T|\le C$, $|\partial_x T| \le Ce^{-\alpha x}$, then         
    \begin{align}
    T(x)|\partial_x T(x)|^2 \le C e^{-\alpha x}.
    \end{align}
    This implies 
    \begin{align}
    \int_r^\infty (e^{2\beta x}-1) T|\partial_x T|^2 dx &\le C \int_r^\infty  (e^{2\beta x}-1) e^{-\alpha x} dx = C e^{-\alpha r}\frac{2\beta-\alpha(1-e^{2\beta r})}{\alpha(\alpha-2\beta)} \nonumber\\
    &\le C \frac{1}{ (\alpha-2\beta)} e^{-(\alpha-2\beta)r}-C \frac{1}{\alpha} e^{-\alpha r} 
    \end{align}
    for $\beta<\alpha/2$.
    
    Assume $T\ge T_m>0$ for some constant $T_m>0$. Then 
    \begin{align}
    \int_0^r \frac{1}{4T^3} dx \le \frac{1}{4T_m^3} r 
    \end{align}
    Therefore,
    \begin{align}
    &\int_r^\infty e^{2\beta x} 36 T|\partial_x T|^2 dx \int_0^r \frac{1}{e^{2\beta x}4T^3 } dx \\
    &\quad \le  \int_r^\infty e^{2\beta x} 36 T|\partial_x T|^2 dx \int_0^r \frac{1}{4T^3 } dx \nonumber\\
    &\quad =\int_r^\infty (e^{2\beta x}-1) 36 T|\partial_x T|^2 dx \int_0^r \frac{1}{4T^3 } dx + \int_r^\infty e^{2\beta x} 36 T|\partial_x T|^2 dx \int_0^r \frac{1}{4T^3 } dx \nonumber\\
    &\quad \le \frac{1}{4T_m^3} r(C \frac{1}{ (\alpha-2\beta)} e^{-(\alpha-2\beta)r}-C \frac{1}{\alpha} e^{-\alpha r} ) + A_1^2.
    \end{align}
    for $\beta$ sufficiently small, the first term on the last line of the above inequality can be sufficiently small and so 
    \begin{align}
    A_0 < A_1 + \eps  <\frac12.
    \end{align}
    Hence by the previous argument,
    % By Lemma \ref{lem.spcond}, 
    the spectral assumption \ref{asA} holds.

\end{proof}

\subsection{Uniqueness for almost  well-prepared boundary data}\label{52}
% In this section, we prove the uniqueness of solution to system \eqref{eq:m1}-\eqref{eq:m2} for small initial data, i.e.,  $\int_0^1 \mu (\psi_b-T_b^4)^{2}d\mu$ is small. We first prove the following lemma.
% \begin{proof}
    We first show that the spectral assumption is not empty and its holds when $\int_0^1 (\psi_b-T_b^4)^2 d\mu$ is small using the Hardy's inequality, [Appendix \ref{Hardyinequality}]. We then give the uniqueness  of the solution for system \eqref{eq:m1}-\eqref{eq:m2}.

    \begin{lemma}\label{lm:wp}
        Assume the boundary data $(T_b,\psi_b)$ satisfies \eqref{eq:ass2} for some constant $C_b$ satisfying \eqref{eq:Cb}. Let $(T,\psi)$ be the corresponding solution obtained in Theorem \ref{thm:ex}. Then $T$ satisfies the inequality \eqref{eq:A2}. As a consequence, $T$ also satisfies the spectral assumption \ref{asA}.
    \end{lemma}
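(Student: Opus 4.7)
The plan is to combine the exponential decay estimate from Theorem \ref{thm:ex} with the weighted bound \eqref{eq:es-2} to estimate explicitly the quantity $A_0^2$ appearing in condition \eqref{eq:A2} by a small multiple of the boundary data norm $\int_0^1 \mu(\psi_b-T_b^4)^2\,d\mu$, and then invoke Lemma \ref{lem.spcond} to deduce the spectral assumption \ref{asA}.

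First I would extract a positive uniform lower bound on $T$. Evaluating the decay estimate $|T(x)-T_\infty| \le M_\alpha e^{-\alpha x}$ at $x=0$ gives $T_\infty \ge T_b - M_\alpha$, and then the triangle inequality yields the pointwise two-sided bound $T_b-2M_\alpha \le T(x) \le T_b+2M_\alpha$ for all $x\in\mathbb{R}_+$. Since $M_\alpha$ is proportional to $(\int_0^1 \mu(\psi_b-T_b^4)^2\,d\mu)^{1/2}$, choosing $C_b$ in \eqref{eq:Cb} small enough (relative to $T_b$ and the fixed parameter $\alpha$) ensures $M_\alpha < T_b/2$, so that $T_m := T_b - 2M_\alpha > 0$ is a valid lower bound. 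This step implicitly requires $T_b>0$; otherwise the problem degenerates and the spectral analysis has no content.

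Next I would estimate the two factors in the definition of $A_0^2$ separately. For the first factor, rewrite
\begin{equation*}
36 T |\partial_x T|^2 = \frac{9}{T^2}\cdot 4 T^3 |\partial_x T|^2 \le \frac{9}{T_m^2}\cdot 4 T^3 |\partial_x T|^2,
\end{equation*}
and then apply the weighted estimate \eqref{eq:es-2} (with $\alpha$ replaced by some $\beta\in(0,\alpha]$) to bound $\int_r^\infty e^{2\beta x} 36 T|\partial_x T|^2\,dx$ by $\tfrac{9}{2T_m^2}\int_0^1 \mu(\psi_b-T_b^4)^2\,d\mu$, uniformly in $r$. For the second factor, the lower bound $T\ge T_m$ gives $\int_0^r e^{-2\beta x}/(4T^3)\,dx \le 1/(8\beta T_m^3)$.

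Multiplying the two bounds and taking the supremum over $r$ produces
\begin{equation*}
A_0^2 \le \frac{9}{16\beta T_m^5} \int_0^1 \mu(\psi_b-T_b^4)^2\,d\mu.
\end{equation*}
The conclusion $A_0^2 < 1/2$ then follows provided $C_b$ is small enough that the right-hand side is strictly less than $1/2$, and Lemma \ref{lem.spcond} immediately yields the spectral assumption \ref{asA}. The main technical obstacle is the self-referential nature of the choice of $C_b$: the lower bound $T_m$ depends on $M_\alpha$, which in turn depends on $C_b$, so the explicit form of $C_b$ in \eqref{eq:Cb} must be obtained by solving a coupled inequality or by fixing $T_m$ first (say $T_m = T_b/2$) and then imposing $C_b \le \min(c_1 T_b^2,\, c_2 \beta T_b^5)$ for suitable universal constants $c_1,c_2$.
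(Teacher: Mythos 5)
Your proposal follows essentially the same route as the paper: derive a uniform positive lower bound $T\ge T_m>0$ from the smallness of the boundary data, bound the tail integral of $36T|\partial_x T|^2$ and the integral of $e^{-2\beta x}/(4T^3)$ separately using the estimates of Theorem \ref{thm:ex}, and conclude via Lemma \ref{lem.spcond}; the paper resolves the same circularity you note by first fixing $C_b=\tfrac12 T_b\sqrt{3\alpha(1-\alpha)}$ to force $T\ge T_b/2$ and then imposing a second smallness condition. One quantitative slip: Lemma \ref{lem.spcond} requires $A_0<\tfrac12$, i.e.\ $A_0^2<\tfrac14$ (since the Hardy constant satisfies $C_{hd}\le 4A_0^2$ and the spectral assumption needs $M<1$), so your target $A_0^2<\tfrac12$ is not quite enough to invoke it — the paper's own proof indeed arranges the product to be $<\tfrac14$, even though the displayed condition \eqref{eq:A2} is stated with $\tfrac12$; tightening your constant accordingly fixes this.
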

    % \emph{{The spectral assumption is satisfied for small boundary data}.}  
    \begin{proof}
    By Lemma \ref{lem.spcond}, the spectral assumption \ref{asA} holds if the condition \eqref{eq:Acond} is fulfilled. We next show \eqref{eq:Acond} holds when $\int_0^1 \mu(\psi_b-T_b^4)^2 d\mu$ is sufficiently small. Define $C_b :=\tfrac12 \int_0^1 \mu(\psi_b- T_b^4)^2 d\mu$, 
    \begin{align}\label{eq:5/1}
        &|\partial_x T | \le \frac{1}{\sqrt{2\alpha}} e^{-\alpha x} C_b^{\frac12}.
    \end{align}
     Theorem \ref{thm:ex} implies  that $(T,\psi)$ are uniformly bounded such that
    \begin{align*}
       0\le  T(x) \le \gamma,\quad 0 \le \psi(x) \le \gamma^4,
    \end{align*}
    for any $x\ge 0$. Hence for $\beta < \alpha$,
    \begin{align}\label{lm91}
        \int_r^\infty e^{2\beta x} 36 T|\partial_x T|^2 dx \le \frac{36}{2(\alpha-\beta) \sqrt{2\alpha}}\gamma C_b e^{-2(\alpha-\beta) r}.
    \end{align}

    On the other hand, due to \eqref{eq:pt2}, we get
    \begin{align*}
        |T(x) - T_b| &= \left|\int_0^x \partial_x T dx \right| = \left|\int_0^x e^{-\alpha z} e^{\alpha z} \partial_z T dx \right| \\
        & \le \left(\int_0^x e^{-2\alpha z} dx\right)^{\frac12} \left(\int_0^x e^{2\alpha z} |\partial_z T|^2 dx\right)^{\frac12} \\
        &\le \frac{1}{\sqrt{2\alpha}} \sqrt{1-e^{-2\alpha x}} \sqrt{\frac{2}{3(1-\alpha)}}C_b \\
        &\le \sqrt{\frac{1-e^{-2\alpha x}}{3\alpha(1-\alpha)}} C_b,
    \end{align*}
    and so
    \begin{align*}
        T(x) \ge T_b - \sqrt{\frac{1-e^{-2\alpha r}}{3\alpha(1-\alpha)}} C_b,
        % T(x) \ge T_b -  \frac{1}{\alpha}\left(\frac{C_b}{\alpha(1-\alpha)}\right)^\frac12(1-e^{-\alpha r}),
        \quad \text{ for any } 0\le x\le r,
    \end{align*}
    if $C_b$ is sufficiently small. For example, we can take
    %  $C_b \le \frac{1}{4}\alpha^3(1-\alpha) T_b^2$
    $C_b = \tfrac12 T_b \sqrt{3\alpha(1-\alpha)}$
     and the above inequality implies 
    \begin{align}\label{t>tb}
        T(x) \ge \frac{1}{2} T_b.
    \end{align}
    Hence 
    \begin{align}\label{eq:318}
        \int_0^r \frac{1}{{e^{2\beta x}4T^3}} dx \le {\frac{1}{\beta T_b^3}} .
    \end{align} 
    Combining this inequality with \eqref{lm91} gives 
    \begin{align*}
        \int_r^\infty e^{2\beta x}36 T|\partial_x T|^2 dx \cdot \int_0^r \frac{1}{{e^{2\beta x} 4T^3}} dx \le  \frac{36}{2(\alpha-\beta)  \sqrt{2\alpha}}\gamma C_b {\frac{1}{\beta T_b^3}} e^{-2(\alpha-\beta) r}. 
    \end{align*}
    Since $e^{-2(\alpha-\beta)r} \le 1$, inequality \eqref{eq:Acond} is realized if 
    \begin{align}
         \frac{36}{2(\alpha-\beta)  \sqrt{2\alpha}}\gamma C_b {\frac{1}{\beta T_b^3}} < \frac14.
    \end{align}
    Therefore, inequality \eqref{eq:A2} is fulfilled if we take
    % the spectral assumption  \ref{asA} is fulfilled if we take 
    \begin{align}\label{eq:Cb}
        C_b = \min \left\{\frac12 T_b \sqrt{3\alpha(1-\alpha)}, \frac{1}{72} \gamma^{-1} T_b^3 \beta (\alpha-\beta)\sqrt{\alpha}
     \right\}
    \end{align} 
    %  \[C_b = \min \left\{\frac12 T_b \sqrt{3\alpha(1-\alpha)}, \frac{1}{72} \gamma^{-1} T_b^3 \beta (\alpha-\beta)\sqrt{\alpha}
    %  \right\}\]
    in \eqref{eq:ass2}. By Lemma \ref{lem.spcond}, the spectral assumption \ref{asA} also holds.
\end{proof}
With this lemma, we can prove Theorem \ref{thm3} for the case when boundary data is almost  well-prepared.

\begin{proof}[Proof of Theorem \ref{thm3}, near well-prepared boundary data]

    Let $\left( T ,\psi \right)$ and $\left(T_1 ,\psi_1\right)$ be two solutions to system \eqref{eq:m1}-\eqref{eq:m2} with boundary conditions \eqref{eq:bd1}-\eqref{eq:bd2} satisfying the assumption of Theorem \ref{thm3}. Then $h:=T_1-T,\phi=\psi_1-\psi$ satisfies 
    \begin{align}
        &\partial_x^2 h + \langle \phi - w h \rangle = 0,\label{eq:li/1}\\
        &\mu \partial_x \phi + \phi - w h=0,\label{eq:li/2}
    \end{align}
    with boundary conditions 
    \begin{align*}
        h(0)=0, \phi(0,\mu)=0,\quad \text{for }\mu >0,
    \end{align*}
    where $w=(T_1^4-T^4)/(T_1-T).$ 
    % From the first step of the proof,  $T$, $T_1$ satisfies the spectral assumption \ref{asA}. 
    By Lemma \ref{lm:wp}, $T$ and $T_1$ both satisfy the spectral assumption \ref{asA} and thus inequality \eqref{sp2} is satisfied for both $T$ and $T_1$.
    Moreover, one can show in the same way that $T_1^2T$(or $T_1^2 T$) also satisfies the inequality  
    \begin{align}\label{eq:sp/1}
        \int_0^\infty e^{2\beta x} 4T^2 T_1 |\partial_x f|^2 dx \ge \int_0^\infty e^{2\beta x} \frac{1}{4T^2 T_1} |\partial_x(4T^2T_1)|^2 f^2 dx
    \end{align}
    which holds if 
    \begin{align*}
        \int_0^\infty e^{2\beta x} \left(32T_2|\partial_x T|^2 + 8\frac{T^2}{T_1}|\partial_x T_1|^2\right) f^2 dx \le  \int_0^\infty e^{2\beta x} 4  T^2 T_1 |\partial_x g|^2 dx.
    \end{align*}
    According to the Hardy's inequality \eqref{eq:hardy}, a sufficient condition for the above inequality to hold is (see Lemma \ref{lem.spcond})
    \begin{align} \label{cond/r}
        \sup_{r\in (0,\infty)}\left(\int_r^\infty e^{2\beta x}\left(32T_1|\partial_x T|^2 + 8\frac{T^2}{T_1}|\partial_x T_1|^2\right) dx \cdot \int_0^r e^{2\beta x} \frac{1}{{4T^2T_1} } dx\right) \le \frac{1}{4}.
        % \sup_{r\in (0,\infty)} \left(\left|\int_r^\infty \partial_x^2(2T_1^2T_2) dx \right| \int_0^r \frac{1}{4T_1^2 T_2} dx \right) \le \frac{1}{4}.
    \end{align}
    Taking $C_b = \tfrac12 T_b \sqrt{3\alpha(1-\alpha)}$, \eqref{t>tb} implies $T(x), T_1(x) \ge \tfrac12 T_b$. Due to the fact that $T,T_1\le \gamma$, we get 
    \begin{align*}
        &\int_r^\infty e^{2\beta x} \left(32T_1|\partial_x T|^2 + 8\frac{T^2}{T_1}|\partial_x T_1|^2\right) dx \nonumber\\
        &\quad \le \frac{32}{2(\alpha-\beta) \sqrt{2\alpha}}\gamma C_b e^{-2(\alpha-\beta) r} + \frac{8}{(\alpha-\beta)\sqrt{2\alpha} T_b} \gamma^2 C_b e^{-2(\alpha-\beta) r}.
    \end{align*}
 On the other hand,  $T(x), \, T_1(x)\ge \frac12 T_b$  implies %and so 
    \begin{align*}
        \int_0^r \frac{1}{{4T^2 T_1}e^{2\beta x}} dx \le {\frac{1}{\beta T_b^3}} .% \frac{2}{T_b^3} r.
    \end{align*}
    Hence \eqref{eq:318} is also realized. Then
    \begin{align}
        &\int_r^\infty e^{2\beta x} \left(32T_1|\partial_x T|^2 + 8\frac{T^2}{T_1}|\partial_x T_1|^2\right) dx \cdot \int_0^r  \frac{1}{{4T^2T_1e^{2\beta x}} } dx \nonumber\\
        &\quad \le \left(\frac{32}{2(\alpha-\beta) \sqrt{2\alpha}}\gamma  + \frac{8}{(\alpha-\beta)\sqrt{2\alpha} T_b} \gamma^2\right) \frac{C_b}{\beta T_b^3} e^{-2(\alpha-\beta) r} \nonumber\\
        &\quad \le \left(\frac{32}{2(\alpha-\beta) \sqrt{2\alpha}}\gamma  + \frac{8}{(\alpha-\beta)\sqrt{2\alpha} T_b} \gamma^2\right) \frac{C_b}{\beta T_b^3}.
            \end{align}
    Hence \eqref{cond/r} is fulfilled if 
    % for sufficiently small value of $\int_0^1 \mu (\psi_b-T_b^4)^2 d\mu$. Therefore,
    % \begin{align*}
    %     &\int_r^\infty e^{2\beta x} \left(32T_2|\partial_x T_1|^2 + 8\frac{T_1^2}{T_2}|\partial_x T_2|^2\right) dx \cdot \int_0^r \frac{1}{\sqrt{4T_1^2T_2} e^{\beta x}} dx \nonumber\\
    %     &\quad \le \left(\frac{32}{2(\alpha-\beta) \sqrt{2\alpha}}\gamma C_b  + \frac{8}{2(\alpha-\beta)\sqrt{2\alpha} T_b} \gamma^2 C_b\right) e^{-2(\alpha-\beta)} \frac{1}{\beta}\sqrt{\frac{2}{T_b^3}}(1-e^{-\beta r}) \nonumber\\
    %     &\quad \le \left(\frac{32}{2(\alpha-\beta) \sqrt{2\alpha}}\gamma C_b  + \frac{8}{2(\alpha-\beta)\sqrt{2\alpha} T_b} \gamma^2 C_b\right) \left(\frac{2\alpha - 2\beta}{2\alpha - \beta}\right)^{\frac{2(\alpha - \beta)}{\beta}} \frac{\beta}{2\alpha - \beta}\frac{1}{\beta}\sqrt{\frac{2}{T_b^3}},
    % \end{align*}
    % due to \eqref{expr<}. Hence \eqref{cond/r} holds if 
    % \begin{align*}
    %    C_b \left(\frac{32}{2(\alpha-\beta) \sqrt{2\alpha}}\gamma   + \frac{8}{2(\alpha-\beta)\sqrt{2\alpha} T_b} \gamma^2 \right) \left(\frac{2\alpha - 2\beta}{2\alpha - \beta}\right)^{\frac{2(\alpha - \beta)}{\beta}} \frac{1}{2\alpha - \beta}\sqrt{\frac{2}{T_b^3}} \le \frac14.
    % \end{align*}
    % Hence we take 
    \begin{align*}
        C_b \le  \min \bigg\{&\frac12 T_b \sqrt{3\alpha(1-\alpha)}, \frac{1}{72} \gamma^{-1} T_b^3 \beta (\alpha-\beta)\sqrt{\alpha},\nonumber\\
        & \frac14 \sqrt{\alpha}(\alpha-\beta) T_b^{3}  \gamma^{-1}(32\gamma+16\gamma T_b^{-1})^{-1}  \bigg\}.
        % \frac{1}{72} (2\alpha-\beta)  \left(\frac{2\alpha - \beta}{2\alpha - 2\beta}\right)^{\frac{2(\alpha - \beta)}{\beta}} \gamma^{-1}{T_b^{\frac32}\alpha^{\frac12}}(\alpha-\beta), \nonumber\\
        % & \frac14 (2\alpha-\beta)  \left(\frac{2\alpha - \beta}{2\alpha - 2\beta}\right)^{\frac{2(\alpha - \beta)}{\beta}} T_b^{\frac32} \alpha^{\frac12} (32\gamma + 8\gamma^2/T_b)^{-1} \bigg\},
    \end{align*}
    Then \eqref{eq:sp/1} holds 
    % Combining \eqref{eq:sp/1} 
    and $w=(T_1^4-T^4)/(T_1-T)$ satisfies the spectral assumption \ref{asA}. % leads to     
    % \begin{align}\label{eq:lm15}
    %     \int_0^\infty e^{2\beta x} p |\partial_x g|^2 dx + \int_0^\infty e^{2\beta x} \partial_x p g\partial_x g dx \ge 0,
    % \end{align}
    % for any $g \in H_{\rm loc}(\mathbb{R}_+)$.% \cap \{g\in L^\infty(\mathbb{R}_+): \sup_{x\in\mathbb{R}_+} e^{-\beta x}g(x) <\infty\}$. 
    Moreover, by the proof of Lemma \ref{lem.spcond}, $M\le 4  \left(\frac{32}{2(\alpha-\beta) \sqrt{2\alpha}}\gamma  + \frac{8}{(\alpha-\beta)\sqrt{2\alpha} T_b} \gamma^2\right) \frac{C_b}{\beta T_b^3}$. With a smaller $C_b$, the constant $M$ in \eqref{eq:spassump2} of the spectral assumption \ref{asA} is expected to the smaller.
    
    Therefore, by Theorem \ref{thm2},
    % the uniqueness of the linear system \eqref{eq:l-1/i}-\eqref{eq:l-2/i} proved in Theorem \ref{thm2}, implies the uniqueness for 
    the solution to the  linear problem \eqref{eq:li/1}-\eqref{eq:li/2} is unique. Consequently, we have $h=0,\psi=0$, which is $T_1=T,\psi_1=\psi$, i.e. the solution to system \eqref{eq:m1}-\eqref{eq:m2} is unique.
% \end{proof}

    \end{proof}
\subsection{Uniqueness under small perturbations}\label{smalS}
First we show that the spectral assumption holds under small perturbations.
\begin{lemma}\label{lm:sm}
    Assume $T$ satisfies the inequality \eqref{eq:A2}. Let $\mathcal{V}$ be the function space defined in \eqref{eq:spV}. Then there exists a small constant $\eps>0$ such that the inequality \eqref{eq:A2} holds for any $T_1\in \mathcal{V}$. As a consequence, the spectral assumption \ref{asA} is realized for any $T_1\in\mathcal{V}$.
\end{lemma}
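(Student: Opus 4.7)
My plan is a perturbative continuity argument for the functional $A_0^2(\cdot)$ defined in \eqref{eq:A2}: I will show $A_0^2(T_1) \le A_0^2(T) + O(\eps^\sigma)$ for some $\sigma > 0$, so that $A_0^2(T_1) < 1/2$ once $\eps$ is small. The first task is to upgrade the $L^2$-closeness $\|T - T_1\|_{L^2} \le \eps$ to stronger topologies by exploiting the extra regularity that any Milne solution $T_1$ inherits from Theorem \ref{thm:ex}: the pointwise bounds $0 \le T_1 \le \gamma$ and the pointwise decay $|\partial_x T_1(x)| \le C e^{-\alpha x}$. The fact that $T - T_1 \in L^2(\mathbb{R}_+)$ forces the limits of $T$ and $T_1$ at infinity to coincide, so integrating $\partial_x(T - T_1)$ from the right gives the uniform tail bound $|T(x) - T_1(x)| \le C e^{-\alpha x}$. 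Splitting $[0,\infty)$ at $A \sim \log(1/\eps)$ and combining this tail with $\|T - T_1\|_{L^2} \le \eps$ yields, for any fixed $\beta < \alpha$, both $\|T - T_1\|_{L^\infty} = O(\eps^\sigma)$ and $\|e^{\beta x}(T - T_1)\|_{L^2} = O(\eps^\sigma)$ for some $\sigma > 0$.

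Since $T$ satisfies \eqref{eq:A2}, it must admit a uniform positive lower bound $T \ge T_m > 0$ (otherwise the second factor in $A_0^2$ would diverge). Together with the $L^\infty$-smallness of $T - T_1$ just established, this gives $T_1 \ge T_m/2$ pointwise for $\eps$ sufficiently small. Decomposing
\[
A_0^2(T_1, r) - A_0^2(T, r) = [A_1(r) - A(r)]\, B_1(r) + A(r)\, [B_1(r) - B(r)],
\]
where $A(r), B(r)$ (and their subscripted versions) denote the first and second factors evaluated at $T$ and $T_1$, the second difference is controlled by $|1/T_1^3 - 1/T^3| \le C \|T - T_1\|_{L^\infty}$, so $|B_1(r) - B(r)| = O(\eps^\sigma)$ uniformly in $r$. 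For the first-factor difference I expand
\[
T_1|\partial_x T_1|^2 - T|\partial_x T|^2 = (T_1 - T)|\partial_x T_1|^2 + T\,(\partial_x T_1 - \partial_x T)(\partial_x T_1 + \partial_x T),
\]
bound the first piece by $\|T - T_1\|_{L^\infty}$ times $\int_0^\infty e^{2\beta x}|\partial_x T_1|^2\, dx$ (uniformly finite for $\beta < \alpha$), and the cross term by Cauchy--Schwarz together with a weighted $H^1$-interpolation described next.

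The required weighted interpolation $\|e^{\beta x}\partial_x(T - T_1)\|_{L^2} = O(\eps^{\sigma'})$ for some $\sigma' > 0$ follows by setting $f = T - T_1$ (with $f(0) = 0$ and uniform exponential tail), integrating by parts to obtain
\[
\int_0^\infty e^{2\beta x}|\partial_x f|^2\, dx = -\int_0^\infty e^{2\beta x} f\, \partial_x^2 f\, dx - 2\beta \int_0^\infty e^{2\beta x} f\, \partial_x f\, dx,
\]
and applying Cauchy--Schwarz using the weighted $L^2$-smallness of $f$ and the uniform weighted $L^2$-boundedness of $\partial_x^2 f$ (inherited from \eqref{eq:m1} and the weighted estimate \eqref{eq:es-2} applied to both $T$ and $T_1$). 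Combining everything yields $A_0^2(T_1) \le A_0^2(T) + O(\eps^\sigma) < 1/2$ for $\eps$ small, so \eqref{eq:A2} holds for $T_1$; the spectral assumption \ref{asA} then follows from Lemma \ref{lem.spcond}. The main technical obstacle lies in handling the exponential weight $e^{2\beta x}$ throughout — in particular, justifying the integration by parts with a non-decaying weight and tracking how the exponent $\beta/\alpha$ enters $\sigma$ — which is precisely why the uniform exponential pointwise decay of $T - T_1$ derived at the outset is essential.
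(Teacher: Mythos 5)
Your proposal follows the same high-level idea as the paper's proof -- show that the Hardy quantity $A_0^2$ in \eqref{eq:A2} is continuous under the perturbation $T\mapsto T_1$ and hence stays below $\tfrac12$ -- but the execution is genuinely different and, in one important respect, more complete. The paper writes $T_1=T+\eps h$, expands the product of the two integrals in powers of $\eps$, and asserts $(A')^2\le A_0^2+C_1\eps$; as written, the constant $C_1$ depends on quantities such as $\int_r^\infty e^{2\beta x}h|\partial_x h|^2\,dx$ and $\int_0^r e^{-2\beta x}(T+\eps h)^{-3}dx$, which are \emph{not} controlled by $\|h\|_{L^2}\le 1$ alone (the paper itself exhibits $h=xe^{-x}$ as a direction along which the second factor diverges), so the paper's argument is really a fixed-direction statement rather than one uniform over $\mathcal{V}$. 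You supply exactly the missing uniform control by exploiting that $T_1$ is itself a Milne solution satisfying the bounds of Theorem \ref{thm:ex} and \eqref{eq:es-2}: the coincidence of the limits at infinity, the uniform tail $|T-T_1|\le Ce^{-\alpha x}$, the split at $A\sim\log(1/\eps)$ to convert $L^2$-closeness into weighted $L^\infty$- and weighted $L^2$-closeness of order $\eps^{\sigma}$, and the integration-by-parts interpolation to get weighted $H^1$-closeness. This is consistent with how the lemma is actually invoked in Theorem \ref{thm3}, where $(T_1,\psi_1)$ is assumed to satisfy \eqref{eq:es-2}, and it buys a genuinely uniform statement over the neighborhood at the price of restricting $\mathcal{V}$ to Milne solutions rather than arbitrary $L^2$-perturbations.

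One step deserves repair: the claim that \eqref{eq:A2} alone forces a uniform lower bound $T\ge T_m>0$. Finiteness of $\int_0^r e^{-2\beta x}(4T^3)^{-1}dx$ for each $r$ only requires $T^{-3}\in L^1_{\rm loc}$ and does not preclude $T$ touching zero (e.g.\ $T\sim|x-x_0|^{1/4}$ locally keeps the integral finite), nor does boundedness of the product rule out $T_\infty=0$. In the regime where the lemma is applied this is harmless -- Lemma \ref{lm:wp} independently establishes $T\ge\tfrac12 T_b>0$ -- but the lower bound should be taken as an additional hypothesis (or cited from Lemma \ref{lm:wp}) rather than derived from \eqref{eq:A2}. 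With that adjustment, your term-by-term estimate of $A_1(r)B_1(r)-A(r)B(r)$, including the uniform-in-$r$ bounds on $A(r)$ and $B_1(r)$ and the Cauchy--Schwarz treatment of the cross term, goes through and yields the stated conclusion via Lemma \ref{lem.spcond}.
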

%We next show the spectral assumption \ref{asA} holds under small perturbations. 
\begin{proof}
Let $A_0$ be given by \eqref{eq:Acond}. Let $h=(T_1-T)/\eps$.
% Let $A_0\ge 0$ be the constant given by 
% \begin{align}
%     A_0^2 =  \sup_{r\in (0,\infty)}\left(\int_r^\infty e^{2\beta x} 36 T|\partial_x T|^2 dx \cdot \int_0^r \frac{1}{4T^3 e^{2\beta x}} dx\right).
% \end{align}
% Then the inequality 
% \begin{align}
%   M \int_0^\infty 4T^3|\partial_x f|^2 dx \ge \int_0^\infty  36 T |\partial_x T|^2 f^2 dx
% \end{align}
% due to the Hardy's inequality \eqref{eq:hardy}, where $A^2 \le C_{hd} \le 4 A^2$.
% Let us introduce $\varepsilon>0$ and  $h\in H^1(\mathbb{R}_+) \cap L^\infty(\mathbb{R}_+)$ we then  calculate
We then calculate
\begin{align}
    &\int_r^\infty e^{2\beta x}36 T_1 |\partial_x T_1|^2 dx \cdot \int_0^r \frac{1}{4T_1^3 e^{2\beta x}} dx \\
    &\quad =\int_r^\infty e^{2\beta x} 36 (T+\varepsilon h)|\partial_x T + \varepsilon \partial_x h|^2 dx \cdot \int_0^r  \frac{1}{4(T+\varepsilon h)^3e^{2\beta x}} dx \nonumber\\
    &\quad = \int_r^\infty  e^{2\beta x} 36 T |\partial_x T|^2 dx \cdot \int_0^r \frac{1}{4T^3  e^{2\beta x}} dx \nonumber\\
    &\qquad + \varepsilon\int_r^\infty  e^{2\beta x} 36  h |\partial_x T + \varepsilon \partial_x h|^2 dx \cdot \int_0^r \frac{1}{4(T+\varepsilon h)^3 e^{2\beta x}}dx \nonumber\\
    &\qquad +  \varepsilon \int_r^\infty  e^{2\beta x} 36 T |\partial_x T|^2 dx \cdot \int_0^r \frac{-3T^2 h - 3\varepsilon Th^2 - \varepsilon^2 h^3 }{4(T+\varepsilon h)^3 T^3 e^{2\beta x}} dx\nonumber\\
    &\qquad + \varepsilon \int_r^\infty  e^{2\beta x} 36 (2 \partial_x T \partial_x h + \varepsilon |\partial_x h|^2) dx \cdot \int_0^r \frac{1}{4(T+\varepsilon h)^3 e^{2\beta x}} dx.
\end{align}
Consequently, 
\begin{align}
    (A')^2&:=\sup_{r\in (0,\infty)}\left(\int_r^\infty  e^{2\beta x} 36 (T+\varepsilon h)|\partial_x T + \varepsilon \partial_x h|^2 dx \cdot \int_0^r \frac{1}{4(T+\varepsilon h)^3 e^{2\beta x}} dx\right) \nonumber\\
    &\le A_0^2 + C_1 \varepsilon.
\end{align}
for some constant $C_1>0$. By the Hardy's inequality, we have 
\begin{align}
    C_{hd}' \int_0^\infty  e^{2\beta x} 4(T+\varepsilon h)^3|\partial_x f|^2 dx \ge \int_0^\infty  e^{2\beta x}  36 (T+\varepsilon h) |\partial_x (T+\varepsilon h)|^2 f^2 dx
\end{align}
where $C_{hd}'\le 4 (A')^2 \le 4A_0^2 + 4C_1 \varepsilon$. Therefore if $A_0^2 <1/4$, then $(A')^2<1/4$ for $\eps$ sufficiently small, and the inequality \ref{eq:A2} holds for $T+\eps h$ if $T$ satisfies \eqref{eq:Acond} with $A_0<1/2$.
Therefore, we conclude that the inequality \eqref{eq:A2} holds for any $T_1 \in C^1(\mathbb{R}_+)$ such that $\|T_1-T\|_{H^1(\mathbb{R}_+)} \le C_2\varepsilon$ for some constant $C_2 >0$, where $T$ satisfies \eqref{eq:Acond} with $A_0<1/2$. 

Moreover, for a smaller $A_0$, $\eps$ could be larger in order for $4A_0^2 + 4C_1\eps <1$ to hold. A smaller $A_0$ implies a smaller $M$ in \eqref{eq:spassump2}. Therefore, it is expected that a smaller $M$ in the spectral assumption is more stable with respect to perturbations. We can also show when $\eps$ is large, the spectral assumption does not hold.  Assume $\varepsilon$ is large, then the dominant term is 
\begin{align}
    \int_r^\infty  e^{2\beta x} 36 h |\partial_x h|^2 dx \cdot \int_0^r \frac{1}{4 h^3 e^{2\beta x}} dx,
\end{align}
which can become large for $h\in L^2_{\rm loc}(\mathbb{R}_+)$.
% $H^1(\mathbb{R}_+) \cap L^\infty(\mathbb{R}_+)$.
 For example, we may take $\beta=0$, $h(x) = xe^{-x}$, the integral $\int_r^\infty 36 h |\partial_x h|^2 dx$ is finite positive while $\int_0^r \frac{1}{4h^3} dx = \int_0^r \frac{e^{3x}}{4x^3} dx$ diverges. Therefore, when $\varepsilon \to \infty$, $(A')^2 \to \infty$ and $C_{hd}' \ge (A')^2 $ also goes to $\infty$. Hence Lemma \ref{lm:sm} only holds for  $\eps$ sufficiently small.
\end{proof}

We proceed to prove Theorem \ref{thm3}.
\begin{proof}[Proof of Theorem \ref{thm3}: The case of small perturbations.]
% Furthermore, we can show that the solution $\left(T,\psi\right)$ is unique in some neighborhood 
% \[
%         \mathcal{V}= \Big \{\left(T_1,\psi_{1}\right)\in H_{\rm loc}^1(\mathbb{R}_+)\times L_{\rm loc}^2(\mathbb{R}_{+}\times [-1,1]): \|T_1-T\|_{H^1(\mathbb{R}_+)} \le C \varepsilon \Big\},
% \]
% where $T$ satisfies the spectral assumption \ref{asA}.
Suppose $T, T_1$ are two functions satisfying the assumptions of Theorem \ref{thm3} and $T_1\in \mathcal{V}$ with $\mathcal{V}$ given in \eqref{eq:spV}. Setting $h=(T_1-T)/\eps$, then $w:=(T_1-T^4)/(T_1-T)$ is
% Suppose $T,T_1$ are two solutions in the set $\mathcal{V}$. Set $h_1 = (T_1-T)/\varepsilon$ and $h_2 = (T_2-T)/\varepsilon$, then $w:=(T_1^4-T_2^4)/(T_1-T_2)$ is 
\begin{align*}
    w &= \frac{(T+\eps h)^4-T^4}{\eps h} = 4T^3 + 6\eps T^2 h+ 4\eps^2 h^2 T + \eps^3 h^3
    % w &= \frac{((T+\varepsilon h_1)^4 - (T+\varepsilon h_2)^4)}{\varepsilon (h_1-h_2)} = 4 T^3 + 6 \varepsilon (h_1+h_2)T^2 + 4 \varepsilon^2(h_1^2 +h_1h_2+h_2^2)T \nonumber\\
    % &\quad + \varepsilon^3(h_1^3+h_2^3+h_1^2h_2+h_1h_2^2)
\end{align*}
is a small perturbation of $4T^3$, thus by the proof of Lemma \ref{lm:sm} and Lemma \ref{LemmaSA}, \eqref{sp2} still holds with $4T^3$ replaced by $w$. Thus by Theorem \ref{thm2}, $T_1=T$ and the solution is unique in $\mathcal{V}$. This finishes the proof of Theorem \ref{thm3}.

% the spectral assumption \ref{asA} also holds for $w$, which implies $T_1=T_2$ and thus the solution to system \eqref{eq:m1}-\eqref{eq:m2} is unique.
\end{proof}

     \appendix

     \section{Proof of Theorem \ref{thm2} for the general case}\label{sec:generalcase}
     Consider 
     \begin{align}
        &\partial_x^2 g + \langle \phi - 4T^3 g \rangle=\langle S_1 \rangle,\label{eq:ap--1}\\
        &\mu\partial_x \phi + (\phi - 4T^3 g ) = S_2,\label{eq:ap--2}
    \end{align}
    supplemented with the following boundary conditions
    \begin{align}
        &g(0) = g_b, \label{eq:ap--1b}\\
        &\phi(0,\mu) = \phi_b(\mu), \text{ for any } \mu \in(0,1]\label{eq:ap--2b},
    \end{align}
    Then the following theorem holds.
    \begin{theorem}
    Suppose $T$ satisfies the spectral assumption \ref{asA} for some constants $0<\beta<1$ and $M>0$. Assume $S_1=S_1(x,\beta),S_2=S_2(x,\beta)$ decays to zero exponentially such that $\|e^{\beta x}S_1\|_{L^2(\mathbb{R}_+\times[-1,1])},\|e^{\beta x}S_1\|_{L^2(\mathbb{R}_+\times[-1,1])}<\infty $.
    Then there exists a unique bounded solution $\left(g,\phi\right) \in L^2_{\rm loc}(\mathbb{R}_+)\times L^2_{\rm loc}(\mathbb{R}_{+}\times [-1,1])$
    %  C(\mathbb{R}_+ )\times C(\mathbb{R}_+ \times [-1,1]))$ 
     to system \eqref{eq:ap--1}-\eqref{eq:ap--2} with boundary conditions \eqref{eq:ap--1b}-\eqref{eq:ap--2b},
     and the solution satisfies 
     \begin{align}
        |g(x)-g_\infty| \le C e^{-\beta x},\quad |\phi(x,\mu)-4T_\infty^3 g_\infty|\le C e^{-\beta x},
     \end{align}
     for some constant $C>0$ and $g_\infty \in \mathbb{R}$, where $T_\infty=\lim_{x\to\infty}T(x)$.

    \end{theorem}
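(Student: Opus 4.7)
My plan is to reduce this theorem to Theorem \ref{thm2} in two stages. First I would homogenize the boundary condition on $g$ by subtracting $(g_b,4T^3 g_b)$: setting $(\tilde g,\tilde\phi):=(g-g_b,\phi-4T^3 g_b)$ yields the equivalent system
\begin{align*}
    \partial_x^2 \tilde g+\langle\tilde\phi-4T^3\tilde g\rangle &= \langle S_1\rangle,\\
    \mu\partial_x\tilde\phi+(\tilde\phi-4T^3\tilde g) &= S_2-12\mu T^2\partial_x T\,g_b,
\end{align*}
with $\tilde g(0)=0$ and $\tilde\phi(0,\mu)=\phi_b(\mu)-4T_b^3 g_b$ for $\mu>0$. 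Because $\partial_x T$ decays exponentially at rate $\alpha$ by Theorem \ref{thm:ex}, the modified source $\tilde S_2:=S_2-12\mu T^2\partial_x T\,g_b$ still satisfies $\|e^{\beta x}\tilde S_2\|_{L^2}<\infty$ for any $\beta\le\alpha$, so we may henceforth assume $g_b=0$ and work with genuinely distinct sources $S_1,S_2$.

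The principal new difficulty is that the clean current identity $\partial_x g=\langle\mu\phi\rangle$ from \eqref{eq:ccc/-}, central to the proof of Theorem \ref{thm2}, no longer holds. Integrating \eqref{eq:ap--2} in $\mu$ and comparing with \eqref{eq:ap--1} gives
\begin{align*}
    \partial_x\bigl(\partial_x g-\langle\mu\phi\rangle\bigr)=\langle S_1-S_2\rangle,
\end{align*}
so after verifying (by a Barbalat-type argument as in Section \ref{ProofExpo1}) that both $\partial_x g$ and $\langle\mu\phi\rangle$ vanish at infinity, I obtain the corrected relation $\partial_x g(x)=\langle\mu\phi(x,\cdot)\rangle+C(x)$, where $C(x):=-\int_x^\infty\langle S_1-S_2\rangle(s)\,ds$ inherits exponential decay, $|C(x)|\lesssim e^{-\beta x}\|e^{\beta\cdot}(S_1-S_2)\|_{L^2(\mathbb{R}_+)}$, by Cauchy--Schwarz.

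Armed with this substitute identity, I would replay the argument of Sections \ref{LMB2}--\ref{LMB5} essentially verbatim. On the bounded interval $[0,B]$ (with the reflective right boundary of \eqref{eq:B/l1}-\eqref{eq:B/l2}), existence is established by the Banach fixed point argument of Lemma \ref{lm:exbd}, the only change being that the transport representation \eqref{eq:phiexact} now carries an extra convolution against $S_2$. Multiplying \eqref{eq:ap--1} by $e^{2\beta x}4T^3 g^B$ and \eqref{eq:ap--2} by $e^{2\beta x}\phi^B$, adding, and using the spectral assumption through Lemma \ref{LemmaSA} yields the weighted estimate. The new right-hand-side terms decompose as $\int\!\!\int e^{2\beta x}(\phi^B-4T^3 g^B)S_2\,d\mu\,dx+\int e^{2\beta x}4T^3 g^B\langle S_2-S_1\rangle\,dx$: the first is absorbed by Young's inequality, and the second is controlled by using the corrected identity to bound $g^B$ pointwise in terms of the weighted $L^2$ norms of $\phi^B-4T^3 g^B$ and $C$. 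Passing $B\to\infty$ via weak compactness and the trace theorem as in Section \ref{LMB3} produces a weak solution on the half-line satisfying the analog of \eqref{eq:el/2} by lower semicontinuity.

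Finally, the exponential decay $|g(x)-g_\infty|\le C e^{-\beta x}$ follows from the weighted $L^2$-bound on $\partial_x g$ exactly as in \eqref{eq:tdecay1}, after writing $\partial_x g$ via the corrected identity rather than through $\langle\mu\phi\rangle$ alone. The decay of $\phi$ is obtained by inserting the limits of $g$ and $T$ into the explicit transport formulas \eqref{eq:sol1/mu+}--\eqref{eq:sol1/mu-} and reproducing the estimates \eqref{eq:dec11}--\eqref{eq:dec12}, with the convolution against $S_2$ handled verbatim like the convolution against $S_1$ there. Uniqueness is immediate: the difference of two solutions satisfies the homogeneous problem $S_1=S_2=0$, $g_b=\phi_b=0$, which falls under Theorem \ref{thm2} and hence vanishes. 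The main obstacle throughout is the book-keeping around the broken current identity; once one treats the correction $C(x)$ as a controlled exponentially decaying source, the structural steps of the proof of Theorem \ref{thm2} carry over unchanged.
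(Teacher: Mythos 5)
Your route is genuinely different from the paper's, and it has a gap at the weighted-estimate step. The paper does not push the broken current identity through the energy argument at all: after homogenizing the boundary datum (with $g_b e^{-x}$ rather than the constant $g_b$ — a harmless difference from your choice), it superposes (i) a system whose transport source is $\tilde S_2-\tfrac12\langle\tilde S_2-\tilde S_1\rangle$, whose $\mu$-average equals the elliptic source $\langle\tilde S_1\rangle$, so the exact identity $\partial_x g=\langle\mu\phi\rangle$ of \eqref{eq:ccc/-} holds and Theorem \ref{thm2} applies verbatim, and (ii) an explicit corrector $\varphi=\mu G$ with $G(x)=-\tfrac32\int_x^\infty\langle\tilde S_2-\tilde S_1\rangle\,ds$ that absorbs the incompatibility $\langle S_2\rangle\neq\langle S_1\rangle$, leaving a second compatible system for the remainder. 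The entire purpose of this decomposition is to avoid the situation you propose to treat head-on.

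The concrete obstruction to your plan sits in the step corresponding to \eqref{eq:cc/4}. Replacing $\partial_x g^B=\langle\mu\phi^B\rangle$ by $\partial_x g^B=\langle\mu\phi^B\rangle+C_B(x)$ turns the exact cancellation
\begin{align*}
2\beta\int_0^B e^{2\beta x}4T^3g^B\partial_xg^B\,dx-\beta\int_0^B\!\!\int_{-1}^1 e^{2\beta x}\mu(\phi^B)^2\,d\mu\,dx=-\beta\int_0^B\!\!\int_{-1}^1 e^{2\beta x}\mu(\phi^B-4T^3g^B)^2\,d\mu\,dx
\end{align*}
into the same identity plus the extra term $2\beta\int_0^Be^{2\beta x}\,4T^3g^B\,C_B(x)\,dx$. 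Here $g^B$ is only bounded (it tends to a generically nonzero constant), while $|C_B(x)|\lesssim e^{-\beta x}$ and no better, so the integrand is of size $e^{\beta x}$ and the term is of order $e^{\beta B}$: it cannot be absorbed, uniformly in $B$, by the dissipation $\int\!\!\int e^{2\beta x}(\phi^B-4T^3g^B)^2$, which controls $\phi^B-4T^3g^B$ but carries no control of $g^B$ itself. Your suggestion to bound $g^B$ pointwise via the corrected identity only produces an $O(1)$ bound, which does not help, and that pointwise bound is itself extracted from the weighted estimate you are in the middle of proving, so the argument is circular. (Note the problematic term is proportional to $\beta$, so your unweighted estimate, the bounded-interval fixed point, and the uniqueness step — where the difference of two solutions solves the homogeneous, hence compatible, problem — are all fine; it is precisely the weighted bound needed for the exponential decay that breaks.) To close the gap, adopt the paper's superposition (or an equivalent one) so that every subproblem satisfies $\langle\text{transport source}\rangle=\text{elliptic source}$ and \eqref{eq:ccc/-} is restored exactly.
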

    
    \begin{proof}
        First we can take $h = g-g_be^{-x}$ and system \eqref{eq:ap--1}-\eqref{eq:ap--2} becomes 
        \begin{align}
            &\partial_x^2 h + \langle \phi - 4T^3g\rangle = \langle S_1\rangle + \langle -\tfrac12 g_b^{-x}+4T^3g_be^{-x} \rangle =:\langle \tilde{S_1}\rangle,\\
            &\mu \partial_x\phi + \phi - 4T^3 g = S_2 + 4T^3 g_be^{-x}:=\tilde{S}_2,
        \end{align}
        with boundary conditions 
        \begin{align}
            g(0)=0,\quad  &\phi(0,\mu) = \phi_b(\mu), \text{ for any } \mu \in(0,1].
        \end{align}
        In order to solve the above problem, we fist construct $(g^1,\phi^1)$ by solving the following system 
        \begin{equation}\label{eq:so1}
        \begin{aligned}
            &\partial_x^2 g^1 + \langle \phi^1 - 4T^3 g^1\rangle = \langle \tilde{S}_1\rangle,\\
            &\mu\partial_x \phi^1 + \phi^1 - 4T^3 g^1 = \tilde{S}_2 - \frac12 \langle \tilde{S}_2-\tilde{S}_1\rangle,
        \end{aligned}
        \end{equation}
        with boundary conditions 
        \begin{align}
            g^1(0)=0,\quad \phi^1(0,\mu) = \phi_b(\mu),\quad \text{for any }\mu \in (0,1].
        \end{align}
        Next let $\varphi=\mu G$ with $G$ being the solution of 
        \begin{align}
            G(x) =- \frac32 \int_x^\infty \langle \tilde{S}_2 - \tilde{S}_1\rangle dx.
        \end{align}
        Then $\varphi$ satisfies 
        \begin{align}
            \langle \mu \partial_x \varphi + \mu \varphi\rangle = \langle \mu^2 \partial_x G\rangle = \frac32 \partial_x G = \langle \tilde{S}_2-\tilde{S}_1\rangle.
        \end{align}

        Next we construct $(g^2,\phi^2)$ by solving the system 
        \begin{equation}\label{eq:so2}
        \begin{aligned}
            &\partial_x^2 g^2 + \langle \phi^2 - 4T^3 g^2 \rangle = 0,\\
            &\mu\partial_x \phi^2 + \phi^2 - 4T^3 g^2 = -(\mu \partial_x \varphi + \mu \varphi )+ \frac12 \langle \tilde{S}_2-\tilde{S}_1\rangle,
        \end{aligned}
    \end{equation}
    with boundary conditions 
    \begin{align}
        g(0)=0, \quad \phi(0,\mu)=-\mu G(0),\quad \text{for any }\mu \in (0,1].
    \end{align}
        Then one can show directly that $(g=h_be^{-x} + g^1 + g^2,\phi = \mu \varphi +\phi^1 + \phi^2)$ satisfies system \eqref{eq:ap--1}-\eqref{eq:ap--2} with boundary conditions \eqref{eq:ap--1b}-\eqref{eq:ap--2b}.

        We can also obtain the decay estimate using Theorem \ref{thm2}. First, since $\langle \tilde{S}_1\rangle = \langle \tilde{S}_2 - \tfrac12 \langle \tilde{S}_2-\tilde{S}_1\rangle \rangle$, we can apply Theorem \ref{thm2} to get the existence and uniqueness of solutions to system \eqref{eq:so1}. Moreover, we can get from estimate  \eqref{eq:thm2.decay}
        \begin{align}
            &|g^1(x)-g^1_{\infty}| \le N_{\beta}^1 e^{-\beta x}, \\
            &|\phi^1(x,\mu) - 4T^3_\infty g_\infty| \le C_1(\beta,\psi_b,T_b,M_{\alpha},N_{\beta}^1) e^{-\beta x},\quad \text{for any }\mu \in [-1,1],
        \end{align}
        with 
        \begin{align}
            N_{\beta}^1=\frac{1}{\sqrt{2\beta}}\left(\frac{2}{3(1-\beta)}\int_0^1\mu\phi_b^2 d\mu + \frac{2}{3(1-\beta)^2} \|e^{\beta x} (\tilde{S}_2)-\tfrac12 \langle \tilde{S}_2-\tilde{S}_1\rangle\|_{L^2(\mathbb{R}_+\times [-1,1])}^2 \right)^{\frac12}.
        \end{align}
        Moreover,
        \begin{align}
            |g^1(x)| \le 2N_{\beta}^1,\quad |\phi^1(x,\mu)|\le C(\beta,\psi_b,T_b,M_{\alpha},N_{\beta}^1).
        \end{align}
        Similarly, we can also get the existence and uniqueness of \eqref{eq:so2} since 
        \begin{align}
            \langle -(\mu \partial_x \varphi + \mu \varphi )+ \frac12 \langle \tilde{S}_2-\tilde{S}_1\rangle\rangle =0.
        \end{align}
        Moreover, we can also get the estimate 
        \begin{align}
            &|g^2(x)-g^2_{\infty}| \le N_{\beta}^2 e^{-\beta x}, \\
            &|\phi^2(x,\mu) - 4T^3_\infty g^2_\infty| \le C_1(\beta,\psi_b,T_b,M_{\alpha},N_{\beta}^2) e^{-\beta x},\quad \text{for any }\mu \in [-1,1],
        \end{align}
        with 
        \begin{align}
            N_{\beta}^2=&\frac{1}{\sqrt{2\beta}}\bigg(\frac{2}{3(1-\beta)}\int_0^1\mu G^2(0) d\mu \nonumber\\
            &+ \frac{2}{3(1-\beta)^2} \|e^{\beta x} ( -(\mu \partial_x \varphi + \mu \varphi )+ \frac12 \langle \tilde{S}_2-\tilde{S}_1\rangle)\|_{L^2(\mathbb{R}_+\times [-1,1])}^2 \bigg)^{\frac12}.
        \end{align}
        We can combine the above estimates and obtain  
        \begin{align}
            |g(x)-g_\infty| \le C e^{-\beta x},\quad |\phi(x,\mu)-4T_\infty^3 g_\infty|\le C e^{-\beta x},
        \end{align}
        with $C$ being a constant depending on $g_b,\phi_b,\beta,\psi_b,T_b,M_{\alpha}$.

    \end{proof}

\section{Monotonicity of linear transport equation}\label{Prooflemma2Psi}
\begin{lemma}\label{lm:psi}
    Given $\psi_b=\psi_b(\mu)$, for any $\mu \in (0,1]$ satisfying $0\le \psi_b \le \gamma$ for some constant $\gamma>0$. Let $h\in C([0,B])$ satisfy $0 \le h(x) \le \gamma$ for any $x\in [0,B]$. 
    Then there exists a unique solution $\psi \in C^1([0,B]\times [-1,1])$ to the equation 
    \begin{align}
        &\mu \partial_x \psi + \psi  = h, \text{ for }x\in (0,B),\quad \mu\in[-1,1],\label{eq:linearpsi}\\
        &\psi(0,\mu) = \psi_b(\mu),\quad \psi(B,\mu)=\psi(B,-\mu),\quad \text{ for any }\mu \in (0,1],
    \end{align}
    and the solution verifies $0\le \psi(x,\mu)\le \gamma$ for $x\in [0,B],\mu \in [-1,1]$.

    Furthermore, let $\psi_1,\psi_2$ be two solutions corresponding to the source data $ h_1, h_2$ and boundary data $\psi_{b1},\psi_{b2}$, respectively. If $0\le h_1(x) \le h_2(x)\le \gamma$ for any $x\in [0,B]$ and $0\le \psi_{b1}(\mu) \le \psi_{b2}(\mu) \le \gamma$ for $\mu \in [0,1]$, then $0\le \psi_1(x,\mu) \le \psi_2(x,\mu)\le \gamma$ for any $x\in [0,B]$ and $\mu \in [-1,1]$.
\end{lemma}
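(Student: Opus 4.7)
The plan is to solve \eqref{eq:linearpsi} by the method of characteristics, obtaining explicit integral representations of $\psi$ in each of the half-spaces $\mu>0$ and $\mu<0$, and then read off all four claims (existence, uniqueness, $L^\infty$-bound, and monotonicity) directly from those formulas. The key structural observation is that the reflective condition $\psi(B,\mu)=\psi(B,-\mu)$ decouples the problem into two sequential sweeps rather than a genuinely coupled system: I first solve on $\mu>0$ using only the left boundary datum, then evaluate the right-boundary trace there, transfer it to $\mu<0$ via the reflection, and finally integrate backwards from $x=B$ to $x=0$.

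Concretely, integrating $\mu\partial_x\psi+\psi=h$ with the integrating factor $e^{x/\mu}$ gives, for $\mu\in(0,1]$,
\[
\psi(x,\mu)=\psi_b(\mu)\,e^{-x/\mu}+\int_0^x\frac{1}{\mu}\,e^{-(x-s)/\mu}h(s)\,ds,
\]
so $\psi(B,\mu)$ for $\mu>0$ is determined by the data, and by the reflective condition it equals $\psi(B,-\mu)$. Then for $\mu\in[-1,0)$ I integrate backwards from $x=B$ to obtain
\[
\psi(x,\mu)=e^{(B-x)/\mu}\,\psi(B,-\mu)+\int_x^B\frac{1}{|\mu|}\,e^{(s-x)/\mu}h(s)\,ds.
\]
Existence is then established by directly checking that the $\psi$ so defined satisfies \eqref{eq:linearpsi} together with both boundary conditions, while uniqueness follows because along each characteristic the linear first-order ODE with a prescribed endpoint value has a unique solution. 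The $C^1$-regularity in $(x,\mu)$ follows by differentiating under the integral sign, using the continuity of $h$ and of $\psi_b$, with continuity across $\mu=0$ secured by the rapid decay of the exponential kernels together with $\int_0^x \mu^{-1}e^{-(x-s)/\mu}h(s)\,ds\to h(x)$ as $\mu\to 0^+$ (and symmetrically for $\mu\to 0^-$).

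The $L^\infty$-bound and the monotonicity are then essentially corollaries of the structure of these formulas. The elementary identities
\[
e^{-x/\mu}+\int_0^x\frac{1}{\mu}\,e^{-(x-s)/\mu}\,ds=1,\qquad e^{(B-x)/\mu}+\int_x^B\frac{1}{|\mu|}\,e^{(s-x)/\mu}\,ds=1,
\]
valid for $\mu>0$ and $\mu<0$ respectively, exhibit $\psi(x,\mu)$ as a convex combination of the boundary datum and values of $h$ along the characteristic, so the hypotheses $0\le\psi_b,h\le\gamma$ propagate immediately to $0\le\psi(x,\mu)\le\gamma$; the inherited bound on $\psi(B,-\mu)$ then feeds into the $\mu<0$ formula to complete the estimate on all of $[0,B]\times[-1,1]$. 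For monotonicity, the same representations are linear in $(\psi_b,h)$ with strictly nonnegative kernels, so $\psi_{b,1}\le\psi_{b,2}$ and $h_1\le h_2$ yield $\psi_1\le\psi_2$ on $\mu>0$ at once; the reflective condition passes this to $\psi_1(B,\cdot)\le\psi_2(B,\cdot)$ for $\mu<0$, and the backward formula propagates the inequality throughout. No genuine obstacle is expected here; the only mildly delicate point is arranging the two sweeps in the correct order, so that the reflective boundary condition is never treated as an implicit coupling between the two half-spaces of velocities.
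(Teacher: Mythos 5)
Your proposal is correct and follows essentially the same route as the paper: both derive the explicit characteristic formulas for $\mu>0$ and $\mu<0$ (your two-sweep representation is algebraically identical to the paper's \eqref{eq:sol/mu+}--\eqref{eq:sol/mu-} after substituting the trace $\psi(B,-\mu)$), and both read off the $L^\infty$ bound and monotonicity from the nonnegativity of the kernels. Your convex-combination identities make the bound slightly more transparent than the paper's direct computation, but the argument is the same.
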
 
\begin{proof}
    The existence and uniqueness of \eqref{eq:linearpsi} can be shown by the method of characteristic lines. Actually, we can compute the solution to \eqref{eq:linearpsi} directly. For $\mu>0$, we integrate from $[0,x]$ and get 
    \begin{align}\label{eq:sol/mu+}
        \psi(x,\mu) = \psi_b(\mu)e^{-\frac{x}{\mu}} + \int_0^x \frac{1}{\mu}h(s) e^{-\frac{x-s}{\mu}} ds.
    \end{align}
    For $\mu<0$ we integrate \eqref{eq:linearpsi} over $[x,B]$ and use the boundary condition $\psi(B,\mu)=\psi(B,-\mu)$ and get 
    \begin{align}\label{eq:sol/mu-}
       \psi(x,\mu) =  \psi_b(-\mu) e^{\frac{2B-x}{\mu}} - \int_0^B \frac{1}{\mu} e^{\frac{2B-x-s}{\mu}} h(s) ds - \int_x^B\frac{1}{\mu} e^{-\frac{x-s}{\mu}} h(s) ds,% &\text{for }\mu<0.
    \end{align}
    The boundness of $\psi$ can be obtained directly by for $\mu>0$ from \eqref{eq:sol/mu+},
    \begin{align*}
        \psi(x,\mu) \le \gamma e^{-\frac{x}{\mu}} + \int_0^x \frac{\gamma}{\mu} e^{-\frac{x-s}{\mu}} ds = \gamma
    \end{align*}
    and for $\mu<0$ from \eqref{eq:sol/mu-},
    \begin{align*}
        \psi(x,\mu) \le \gamma e^{\frac{2B-x}{\mu}} - \int_0^B \frac{\gamma}{\mu} e^{\frac{2B-x-s}{\mu}} ds - \int_x^B \frac{\gamma}{\mu} e^{-\frac{x-s}{\mu}} ds = \gamma.
    \end{align*}
   
    The monotonic property follows directly from the expressions \eqref{eq:sol/mu+} and \eqref{eq:sol/mu-}. We can see from these expressions that $h_1(x) \le h_2(x)$ for all $x\in\mathbb{R}_+$ implies $\psi_1(x)\le \psi_2(x)$ for all $x\in\mathbb{R}_+$.
\end{proof}
\section{Formula for the transport equation}\label{lm:formula}
Suppose $h \in L^\infty(\mathbb{R}_{+})$, 
% and the limit $\lim_{x\to\infty} h(x) = h_\infty$ exists, 
the solution to the linear transport equation 
\begin{align}
    &\mu \partial_x \psi + \psi = h, \quad \text{for } x\in [0,\infty],\, \mu \in [-,1,1],\\
    &\psi (0,\mu)=\psi_b,\quad \text{for }\mu\in (0,1]
\end{align}
is given by 
\begin{align}\label{eq:sol1/mu+}
    \psi(x,\mu) = \psi_b(\mu)e^{-\frac{x}{\mu}} + \int_0^x \frac{1}{\mu}h(s) e^{-\frac{x-s}{\mu}} ds.
\end{align}
and 
\begin{align}\label{eq:sol1/mu-}
    \psi(x,\mu) =    - \int_x^\infty \frac{1}{\mu} e^{-\frac{x-s}{\mu}} h(s) ds,% &\text{for }\mu<0.
 \end{align}
 The formula \eqref{eq:sol1/mu+} is the same as \eqref{eq:sol/mu+} and can be shown the same way. The formula \eqref{eq:sol1/mu-} is obtained by sending $B\to\infty$ in \eqref{eq:sol/mu-}. The condition $h \in L^\infty$ is used, since for example,
 \begin{align}
    0\le \int_0^B \frac{1}{\mu} e^{\frac{2B-x-s}{\mu}} h(s) ds = e^{\frac{2B-x}{\mu}} \frac{1}{\mu} \int_0^B e^{-\frac{s}{\mu}} h(s) ds \le  e^{\frac{2B-x}{\mu}} \|h\|_{L^\infty([0,\infty))} (1-e^{-\frac{B}{\mu}})
 \end{align}
 By passing to the limit $B\to\infty$ the above inequality becomes 
 \begin{align}
    0\le  \lim_{B\to\infty} \int_0^B \frac{1}{\mu} e^{\frac{2B-x-s}{\mu}} h(s) ds \le 0
 \end{align}
 and hence $  \lim_{B\to\infty}\int_0^B\frac{1}{\mu}  e^{\frac{2B-x-s}{\mu}} h(s) ds =0$.

\section{Generalized Hardy's inequality}\label{Hardyinequality}

 First we recall the following  generalized Hardy's inequality.
\begin{lemma}[Generalized Hardy's inequality \cite{masmoudi2011hardy}] \label{lm:hardy}
	Let $p>1$ and $0<b\le \infty$. The inequality 
            \begin{align}\label{eq:hardy}
                % \int_0^b f^p(x) u(x) dx \le C_{hd} \int_0^b (f'(x))^p v(x) dx
                \int_0^b \left(\int_0^x F(t) dt\right)^p u(x) dx \le C_{hd} \int_0^b F(x)^p v(x) dx
            \end{align} 
            holds for all measurable function $F(x) \ge 0$ on $(0,b)$ if and only if
            % $C^1$ function $f\ge 0$ on $(0,b)$ satisfying $f(0)=0$ if and only if 
            \begin{align*}
                A = \sup_{r\in (0,b)} \left(\int_r^b u(x) dx \right)^{\frac1p}\left(\int_0^r (v(x))^{1-p'} dx\right)^{\frac{1}{p'}} < \infty,
            \end{align*} 
            and the best constant of $C_{hd}$ satisfies $A \le C_{hd}^{1/p}\le p^{1/p}(p')^{1/p'}A$ where $\tfrac{1}{p'}+\tfrac{1}{p}=1$.
\end{lemma}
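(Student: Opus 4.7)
This is the classical Muckenhoupt--Bradley characterization of the weighted Hardy inequality. The plan is to prove the two directions separately: necessity via an explicit extremizing test function, and sufficiency via Hölder's inequality with a power-of-$V$ weight followed by Fubini and a telescoping bound.

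\emph{Necessity.} Set $V(r) := \int_0^r v(t)^{1-p'}\,dt$ and, for each fixed $r \in (0,b)$, plug the test function $F_r(t) = v(t)^{1-p'} \chi_{(0,r)}(t)$ into \eqref{eq:hardy}. The inner integral satisfies $\int_0^x F_r(t)\,dt = V(\min(x,r))$, so restricting the outer integral to $x\ge r$ gives
\[
V(r)^p \int_r^b u(x)\,dx \;\le\; C_{hd}\int_0^r v(t)^{(1-p')p+1}\,dt \;=\; C_{hd}\,V(r),
\]
using the arithmetic identity $(1-p')p+1 = 1-p'$. Dividing by $V(r)$, taking $p$-th roots, and taking supremum over $r \in (0,b)$ yields $A \le C_{hd}^{1/p}$.

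\emph{Sufficiency.} Given $A < \infty$, set $G(x) := \int_0^x F(t)\,dt$ and, with a parameter $\theta > 0$ satisfying $\theta p' < 1$, apply Hölder's inequality with the weight split $F = (F\,v^{1/p}V^{\theta})(v^{-1/p}V^{-\theta})$:
\[
G(x) \;\le\; \left(\int_0^x F^p\,v\,V^{\theta p}\,dt\right)^{1/p} \left(\int_0^x v^{1-p'}\,V^{-\theta p'}\,dt\right)^{1/p'}.
\]
Since $V' = v^{1-p'}$, the second factor equals $(1-\theta p')^{-1/p'} V(x)^{(1-\theta p')/p'}$. Raising to the $p$-th power, multiplying by $u(x)$, and applying Fubini gives
\[
\int_0^b G^p u\,dx \;\le\; (1-\theta p')^{-p/p'}\int_0^b F(t)^p v(t) V(t)^{\theta p}\left[\int_t^b u(x)V(x)^{(1-\theta p')(p-1)}\,dx\right]dt.
\]
The bracketed integral is controlled using the hypothesis $\int_x^b u \le A^p V(x)^{-(p-1)}$ together with integration by parts against $\tfrac{d}{dx}V(x)^{(1-\theta p')(p-1)}$; the computation produces a bound of the form $C(\theta,p)\, A^p\, V(t)^{-\theta p}$, whereby the $V(t)^{\theta p}$ factor in the outer integral cancels exactly. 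Optimizing the multiplicative constant in $\theta$ (the standard choice $\theta = 1/(pp')$ is clean) yields $C_{hd}^{1/p} \le p^{1/p}(p')^{1/p'} A$.

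\emph{Main obstacle.} The routine direction is necessity; the delicate step is the Fubini/integration-by-parts computation in sufficiency, where one must verify that the $V(t)^{\theta p}$ factor gained from Hölder precisely balances the $V(t)^{-\theta p}$ factor produced by bounding the inner integral via the $A$-hypothesis, so that the $F^p v$ weight on the right-hand side is exactly reproduced. One should also handle the cases where $V$ or $\int_\cdot^b u$ blow up at an endpoint by a standard truncation-and-monotone-convergence argument, and the degenerate case $v \equiv 0$ or $u \equiv 0$ on subintervals by the usual convention $0 \cdot \infty = 0$.
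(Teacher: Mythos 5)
The paper does not prove this lemma at all: it is quoted verbatim from \cite{masmoudi2011hardy} as a known result (the Muckenhoupt--Bradley characterization of the weighted Hardy inequality), so there is no in-paper proof to compare against. Your argument is a correct, self-contained proof along the standard Muckenhoupt lines, and the key computations check out: in the necessity step the identity $(1-p')p+1=1-p'$ does hold (since $1-p'=-1/(p-1)$), so the test function $F_r=v^{1-p'}\chi_{(0,r)}$ gives $V(r)^p\int_r^b u\le C_{hd}V(r)$ and hence $A\le C_{hd}^{1/p}$ after noting $(p-1)/p=1/p'$; in the sufficiency step the exponent bookkeeping works because $-p'/p=1-p'$ (so the second H\"older factor integrates $V'V^{-\theta p'}$ exactly) and $p'(p-1)=p$ (so both the boundary term and the integrated term in the integration by parts produce precisely $V(t)^{-\theta p}$, cancelling the $V(t)^{\theta p}$ gained from H\"older). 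With $\theta=1/(pp')$ the resulting constant is $(p')^{p-1}\bigl(1+(p-1)\bigr)=p(p')^{p-1}$, i.e. $C_{hd}^{1/p}\le p^{1/p}(p')^{1/p'}A$, matching the statement. The only points to make explicit in a full write-up are the ones you already flag: truncation when $V(r)=\infty$ or $\int_r^b u=\infty$, positivity of $v$ where you form $v^{-1/p}$, and dropping the (nonpositive) boundary contribution at $x=b$ in the integration by parts. Since the paper supplies no argument, your proof is strictly more informative than what is in the text.
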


\section{Leray-Schauder Theorem}\label{LSTheorem}
\begin{theorem} \cite[Theorem 11.3]{gilbarg2015elliptic}
%\cite[Chapter 14, Theorem B.5]{taylor2011partial}
%    Let $V$ be a Banach space, and let $F:[0,1]\times V \longrightarrow V$ be a
%continuous, compact map, such that $F (0, v)= v_0$ is independent of $v\in  V$.
%Suppose there exists $K < \infty$ such that, for all	$(\sigma,x)\in[0,1]\times V$ ,
%\[F(\sigma,x)=x \Rightarrow \|x\|< K\]
%Then the map $F_1:   V \mapsto V$ given by $F_1(v)=F_1(1,v)$ has a fixed point.
Let $\mathcal{N}$ be a compact mapping of a Banach space $D$ into itself, and
suppose there exists a constant $K$ such that
\[
\|x\|_{D}<K
\]
for all $x\in D$ and $\sigma \in [0,1]$ satisfying $x = \sigma \mathcal{N} x$. Then $\mathcal{N}$ has a fixed point.

\end{theorem}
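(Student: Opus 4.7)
The plan is to deduce the Leray--Schauder theorem from Schauder's fixed point theorem by means of a radial retraction argument, reducing a self-map of a possibly unbounded Banach space to a self-map of a closed convex ball. Fix any radius $R > K$ and let $\overline{B}_R = \{x \in D : \|x\|_D \le R\}$. Define the radial retraction $\rho_R : D \to \overline{B}_R$ by
\[
\rho_R(y) = \begin{cases} y, & \|y\|_D \le R, \\ R y / \|y\|_D, & \|y\|_D > R, \end{cases}
\]
which is continuous and maps bounded sets to bounded sets. The composition $\mathcal{N}_R := \rho_R \circ \mathcal{N}$ then sends $\overline{B}_R$ into $\overline{B}_R$. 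Because $\mathcal{N}$ is compact and $\rho_R$ is continuous, $\mathcal{N}_R$ is compact as a map from $\overline{B}_R$ to itself.

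Next I would invoke Schauder's fixed point theorem on the closed, bounded, convex set $\overline{B}_R$, which yields a point $x_0 \in \overline{B}_R$ with $x_0 = \rho_R(\mathcal{N}(x_0))$. The crucial final step is to argue that $x_0$ is already a fixed point of the unretracted map $\mathcal{N}$. If $\|\mathcal{N}(x_0)\|_D \le R$, then $\rho_R$ acts as the identity on $\mathcal{N}(x_0)$ and we obtain $x_0 = \mathcal{N}(x_0)$ directly. Otherwise $\|\mathcal{N}(x_0)\|_D > R$, and then
\[
x_0 \;=\; \rho_R(\mathcal{N}(x_0)) \;=\; \frac{R}{\|\mathcal{N}(x_0)\|_D}\,\mathcal{N}(x_0) \;=\; \sigma \mathcal{N}(x_0),
\]
with $\sigma := R/\|\mathcal{N}(x_0)\|_D \in (0,1)$ and $\|x_0\|_D = R > K$. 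This contradicts the a priori bound hypothesis, so this alternative is impossible, and $x_0$ is the desired fixed point.

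The main obstacle is not conceptual but lies in verifying that $\mathcal{N}_R$ genuinely inherits compactness from $\mathcal{N}$: one needs that $\rho_R$ sends relatively compact sets to relatively compact sets, which follows from continuity together with the image lying inside the (bounded) ball. I would also briefly remark that the hypothesis is used precisely at a single point --- to rule out the possibility of a ``boundary'' fixed point of $\mathcal{N}_R$ which corresponds to a genuine $\sigma < 1$ eigenfunction of $\mathcal{N}$ of norm $R$. An alternative plan, if one prefers a degree-theoretic proof, would be to apply Leray--Schauder degree to the homotopy $H(\sigma,x) = x - \sigma\mathcal{N}(x)$ on $B_K$, use homotopy invariance to conclude $\deg(I - \mathcal{N}, B_K, 0) = \deg(I, B_K, 0) = 1 \neq 0$, and extract a fixed point; but the retraction argument is shorter and self-contained.
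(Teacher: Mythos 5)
Your retraction argument is correct and complete: the reduction to Schauder's theorem on $\overline{B}_R$, the verification that $\mathcal{N}_R=\rho_R\circ\mathcal{N}$ is compact, and the use of the a priori bound to exclude the case $\|\mathcal{N}(x_0)\|_D>R$ (which would force $\|x_0\|_D=R>K$ with $x_0=\sigma\mathcal{N}(x_0)$, $\sigma\in(0,1)$) are all sound. The paper does not prove this statement but merely quotes it from Gilbarg--Trudinger, and your proof is essentially the standard one given in that reference, so there is nothing to reconcile.
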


    \bibliographystyle{siam}
    \bibliography{ReferenceHGM}
    
    \section*{Acknowledgements} 
The work of N.M. is supported by NSF grant DMS-1716466 and by Tamkeen under the NYU Abu Dhabi Research Institute grant of the center SITE. The work of M.G is supported by Tamkeen under the NYU Abu Dhabi Research Institute grant of the center SITE.

    \end{document}